\numberwithin{equation}{section}
\newtheorem{theorem}{Theorem}[section]
\newtheorem{corollary}[theorem]{Corollary}
\newtheorem{lemma}[theorem]{Lemma}
\newtheorem{proposition}[theorem]{Proposition}
\theoremstyle{definition}
\newtheorem{definition}[theorem]{Definition}
\newtheorem{remark}[theorem]{Remark}
\title[The intersection of two real flag manifolds]
{The intersection of two real flag manifolds in a complex flag manifold}
\author[O.~Ikawa, H.~Iriyeh, T.~Okuda, T.~Sakai and H.~Tasaki]
{Osamu Ikawa, Hiroshi Iriyeh, Takayuki Okuda, Takashi Sakai and Hiroyuki Tasaki}
\address{Faculty of Arts and Sciences,
Kyoto Institute of Technology,
Matsugasaki, Sakyoku, Kyoto 606-8585, Japan}
\email{ikawa@kit.ac.jp}
\address{Graduate School of Science and Engineering,
Ibaraki University,
2-1-1, Bunkyo, Mito 310-8512, Japan}
\email{hiroshi.irie.math@vc.ibaraki.ac.jp}
\address{Mathematics Program, Graduate School of Advanced Science and Engineering,
Hiroshima University,
Hiroshima 739-8526, Japan}
\email{okudatak@hiroshima-u.ac.jp}
\address{Department of Mathematical Sciences,
Tokyo Metropolitan University,
1-1 Minami-Osawa, Hachioji, Tokyo 192–0397, Japan}
\email{sakai-t@tmu.ac.jp}
\address{Department of Mathematical Sciences,
Tokyo Metropolitan University,
1-1 Minami-Osawa, Hachioji, Tokyo 192–0397, Japan;
Department of Mathematics,
Faculty of Pure and Applied Sciences,
University of Tsukuba,
Tsukuba, Ibaraki 305-8571, Japan}
\email{tasaki@tmu.ac.jp}
\subjclass{53C30, 17B22, 53C40}
\keywords{complex flag manifold, real flag manifold, antipodal set, symmetric triad}
\begin{document}

\maketitle

\begin{abstract}
We give a necessary and sufficient condition for two real flag manifolds,
which are not necessarily congruent,
in a complex flag manifold to intersect transversally in terms of the symmetric triad.
Then we show that the intersection of two real flag manifolds is antipodal.
As an application, we prove that any real flag manifold in a complex flag manifold
is a globally tight Lagrangian submanifold.
\end{abstract}

\section{Introduction}
\label{sec:introduction}

In 1988, Chen and Nagano \cite{Chen-Nagano1988} introduced
the notion of antipodal set of a compact symmetric space.
A subset $\mathcal{A}$ of a compact symmetric space $M$ is called an {\it antipodal set}
if the geodesic symmetry $s_x$ of $M$ at $x$ fixes all points of $\mathcal{A}$
for every point $x \in \mathcal{A}$.
The maximal possible cardinality of antipodal sets is a geometric invariant of a compact symmetric space $M$,
which is called the {\it $2$-number} and denoted by $\#_2M$.
An antipodal set of $M$ is said to be {\it great} if its cardinality attains $\#_2 M$.

In a K\"ahler manifold, a {\it real form} is 
a connected component of the fixed point set
of an anti-holomorphic involutive isometry,
which is a totally geodesic Lagrangian submanifold.
Tasaki~\cite{Tasaki2010}, Tanaka  and Tasaki~\cite{Tanaka-Tasaki2012, Tanaka-Tasaki, TTc}
studied the structure of the intersection of two real forms $L_0$ and $L_1$ in
a Hermitian symmetric space $M$ of compact type,
and proved that, if $L_0$ and $L_1$ intersect transversally,
then the intersection $L_0 \cap L_1$ is an antipodal set of $M$.
The antipodal structure of the intersection $L_0 \cap L_1$ played a crucial role
to study the Lagrangian Floer homology 
with coefficients in $\mathbb{Z}_2$
of two real forms,
which are not necessarily congruent,
in a Hermitian symmetric space of compact type (\cite{Irie-Sakai-Tasaki2013}).
Ikawa, Tanaka and Tasaki~\cite{Ikawa-Tanaka-Tasaki} gave
a necessary and sufficient condition for two real forms
in a Hermitian symmetric space of compact type to intersect transversally
in terms of the symmetric triad.
Moreover they showed that such a discrete intersection is an orbit of a certain Weyl group.

In this paper, we generalize the above results to
the case of a {\it complex flag manifold},
which is an orbit of the adjoint representation of a compact connected semisimple Lie group $G$
equipped with a $G$-invariant K\"ahler structure.
Then it is a K\"ahler $C$-space,
that is, a simply-connected compact homogeneous K\"ahler manifold (cf.\ \cite{Borel}).
Conversely, any K\"ahler $C$-space can be realized as a complex flag manifold
(cf.\ \cite{Borel}, \cite{Wang}).
In this paper, we study the intersection of two real flag manifolds in a complex flag manifold.
Using $k$-symmetric structures, 
S\'anchez \cite{Sanchez1993, Sanchez1997} studied antipodal sets of a
complex flag manifold (cf.\ \cite{Berndt-Console-Fino2001}).
In Section~\ref{sec:complex flag manifolds},
we give an alternative definition of an antipodal set of a complex flag manifold using a torus action,
and show that a maximal antipodal set is characterized as an orbit of the Weyl group of $G$
(Theorem~\ref{thm:complexflag}).

Let $(G,K)$ be a symmetric pair of compact type
with the involution $\tilde{\theta}$ of $G$,
and let $\theta$ be the differential of $\tilde{\theta}$,
which is an involution of the Lie algebra $\mathfrak{g}$ of $G$.
In this paper, we abbreviate involutive automorphism to involution.
Suppose that $-\theta$ fixes the base point $x_0 \in \mathfrak{g}$ of the adjoint orbit $M=\mathrm{Ad}(G)x_0$.
Then $-\theta$ induces an anti-holomorphic involutive isometry on the complex flag manifold $M$.
It is known that the fixed point set $L$ of $-\theta$ in $M$ is connected,
hence it is a real form of $M$.
And $K$ acts transitively on $L$
by the linear isotropy representation ($s$-representation),
that is, the restriction of the adjoint representation of $G$ to $K$.
The orbit $L = \mathrm{Ad}_G(K) x_0$
is called a {\it real flag manifold} or an {\it $R$-space}.

In Section~\ref{sec:real flag manifolds}, we summarize some facts about real flag manifolds, which are needed later.
In \cite{IST2014, IST-ICM2014},
we studied the antipodal structure of the intersection $L_0 \cap L_1$ 
of two real flag manifolds $L_0$ and $L_1$
in the complex flag manifold consisting of sequences of complex subspaces in a complex vector space
when $L_0$ and $L_1$ are congruent.

In Section~\ref{sec:intersection of real flag manifolds},
we study the intersection of two real flag manifolds,
which are not necessarily congruent, in a complex flag manifold.
Let $M=\mathrm{Ad}(G)x_0$ be a complex flag manifold
of a compact connected semisimple Lie group $G$.
Let $L_0$ and $L_1$ be two real flag manifolds of $M$
defined by anti-holomorphic involutive isometries $\tau_0=-\theta_0|_M$
and $\tau_1=-\theta_1|_M$ respectively,
where $\theta_0$ and $\theta_1$ are involutions of $\mathfrak{g}$
which satisfy $-\theta_0(x_0)=x_0$ and $-\theta_1(x_0)=x_0$.
Suppose that $\theta_0 \theta_1 = \theta_1 \theta_0$.
We set subspaces $\mathfrak{p}_i = \{ X \in \mathfrak{g} \mid -\theta_i(X) = X \} \ (i=0,1)$ in $\mathfrak{g}$,
and take a maximal abelian subspace $\mathfrak{a}$ in $\mathfrak{p}_0 \cap \mathfrak{p}_1$ containing $x_0$.
Then we have a triad $(\tilde\Sigma, \Sigma, W)$ in $\mathfrak{a}$,
where $\tilde\Sigma$ and $\Sigma$ are root systems in $\mathfrak{a}$.
A point $H \in \mathfrak{a}$ is said to be {\it regular}
if $H$ is in a {\it cell} of $\mathfrak{a}$ determined by $(\tilde\Sigma, \Sigma, W)$
(see Subsections~\ref{sec:congruent intersection} and \ref{sec:non-congruent intersection} for details).
We need the assumption $\theta_0 \theta_1 = \theta_1 \theta_0$
in order to define the triad.
Note that, in the cases of Hermitian symmetric spaces,
we may assume that $\theta_0\theta_1=\theta_1\theta_0$ without loss of generality.
Under this setting, we obtain the following theorem.

\begin{theorem}[Theorems \ref{thm:congruent} and \ref{thm:main}] \label{thm:1-1}
For $a = \exp H \ (H \in \mathfrak{a})$,
the intersection $L_0 \cap \mathrm{Ad}(a)L_1$ of $L_0$ and $\mathrm{Ad}(a)L_1$
is discrete if and only if $H \in \mathfrak{a}$ is a regular point of the triad $(\tilde{\Sigma},\Sigma,W)$.
In this case we have
$$
L_0 \cap \mathrm{Ad}(a)L_1
= M \cap \mathfrak{a}
= W(\tilde{\Sigma})x_0,
$$
where $W(\tilde{\Sigma})$ is the Weyl group of the root system $\tilde{\Sigma}$ in $\mathfrak{a}$.
In particular, a discrete intersection is an antipodal set of a complex flag manifold.
\end{theorem}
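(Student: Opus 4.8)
The plan is to prove the three assertions—the discreteness criterion, the identification $L_0 \cap \mathrm{Ad}(a)L_1 = M \cap \mathfrak{a} = W(\tilde{\Sigma})x_0$, and antipodality—by first disposing of the elementary inclusion and the orbit identity, then doing the delicate transversality analysis through the root data of the triad, and finally reducing antipodality to Theorem~\ref{thm:complexflag}. Throughout I would use the decomposition of $\mathfrak{g}$ under the commuting pair $\theta_0,\theta_1$ into the four joint eigenspaces, so that $\mathfrak{p}_0 \cap \mathfrak{p}_1$ is the common $(-1)$-eigenspace and $\mathfrak{a}$ is a maximal abelian subspace inside it, with $\tilde\Sigma,\Sigma$ the root systems attached to $\mathfrak{a}$.

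First I would compute $M \cap \mathfrak{a}$ and record the easy inclusion. Since $\mathfrak{a}$ is maximal abelian in $\mathfrak{p}_0\cap\mathfrak{p}_1$, I expect the adjoint orbit to meet $\mathfrak{a}$ in a single orbit of the Weyl group $W(\tilde{\Sigma})$ of the restricted root system $\tilde\Sigma$; this is the flag-manifold analogue of an adjoint orbit meeting a Cartan subalgebra in a $W$-orbit, and I would derive it from the root-space decomposition relative to $\mathfrak{a}$, giving $M\cap\mathfrak{a}=W(\tilde\Sigma)x_0$. For the inclusion, each $w\cdot x_0$ with $w\in W(\tilde\Sigma)$ lies in $\mathfrak{a}\subseteq\mathfrak{p}_0\cap\mathfrak{p}_1$, so $-\theta_0(w x_0)=w x_0$ puts it in $L_0$, while $\mathrm{Ad}(a^{-1})(w x_0)=w x_0$ (because $\mathrm{Ad}(\exp H)$ fixes $\mathfrak{a}$ pointwise) lies in $\mathfrak{p}_1$, so $w x_0\in\mathrm{Ad}(a)L_1$. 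Thus $W(\tilde\Sigma)x_0\subseteq L_0\cap\mathrm{Ad}(a)L_1$ for \emph{every} $H$.

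The substance is the reverse inclusion together with the discreteness criterion. Writing a point as $x=\mathrm{Ad}(g)x_0$, membership in $L_0\cap\mathrm{Ad}(a)L_1$ is the pair of fixed-point conditions $-\theta_0(x)=x$ and $-\theta_1(\mathrm{Ad}(a^{-1})x)=\mathrm{Ad}(a^{-1})x$. I would first reduce to $\mathfrak{a}$ using that $\exp\mathfrak{a}$ is a section for the associated Hermann action of $K_0$ on $G/K_1$—the very setting in which the symmetric triad $(\tilde\Sigma,\Sigma,W)$ is built—so that each intersection point is carried into $\mathfrak{a}$ and the problem becomes local behavior along $M\cap\mathfrak{a}$. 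For a point in $\mathfrak{a}$, the tangent spaces $T_xL_0=[\,\mathfrak{k}_0,x\,]$ and $T_x\mathrm{Ad}(a)L_1=\mathrm{Ad}(\exp H)[\,\mathfrak{k}_1,x\,]$ decompose into root components, and the transversality condition $T_xL_0\cap T_x\mathrm{Ad}(a)L_1=0$ becomes a family of scalar conditions indexed by the roots, each vanishing exactly on a wall of the cell structure (conditions of the form $\sin\langle\alpha,H\rangle=0$ or $\cos\langle\alpha,H\rangle=0$ coming from the rotation by $\mathrm{Ad}(\exp H)$ on each root space). Since $L_0$ and $\mathrm{Ad}(a)L_1$ are Lagrangian of complementary half-dimension, transversality at every intersection point is equivalent to discreteness of the intersection, and the computation shows both hold precisely when $H$ avoids all walls, i.e.\ when $H$ is regular; in that case the same root conditions force any intersection point in $\mathfrak{a}$ to be one of the $W(\tilde\Sigma)x_0$, which together with the reduction and the easy inclusion gives $L_0\cap\mathrm{Ad}(a)L_1=M\cap\mathfrak{a}=W(\tilde\Sigma)x_0$. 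The main obstacle I anticipate is exactly this translation of the geometric transversality condition into the combinatorics of the triad: it requires a careful root-by-root accounting of how $\mathrm{Ad}(\exp H)$ twists the root components of $T_x\mathrm{Ad}(a)L_1$ against those of $T_xL_0$, and a verification that the resulting vanishing loci coincide with the cell walls of $(\tilde\Sigma,\Sigma,W)$.

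Finally, antipodality is immediate from the identification. Choosing a maximal abelian subalgebra $\mathfrak{t}$ of $\mathfrak{g}$ with $\mathfrak{a}\subseteq\mathfrak{t}$, we have $L_0\cap\mathrm{Ad}(a)L_1=M\cap\mathfrak{a}\subseteq M\cap\mathfrak{t}$, and by Theorem~\ref{thm:complexflag} the set $M\cap\mathfrak{t}$ is a maximal antipodal set of $M$, realized as a Weyl-group orbit of $x_0$. Since any subset of an antipodal set is again antipodal, I would conclude that the discrete intersection $L_0\cap\mathrm{Ad}(a)L_1$ is an antipodal set of the complex flag manifold $M$.
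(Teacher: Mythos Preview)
Your root-by-root analysis is the right engine, but you run it at the wrong level and your ``reduction to $\mathfrak{a}$'' has a genuine gap. The Hermann decomposition $G = K_0 A K_1$ is a statement about elements of $G$; it is precisely what reduces $L_0 \cap \mathrm{Ad}(g)L_1$ for arbitrary $g$ to $L_0 \cap \mathrm{Ad}(a)L_1$ with $a \in A$, and that reduction is already built into the hypothesis. It does \emph{not} say that each point of $L_0 \cap \mathrm{Ad}(a)L_1 \subset M$ lies in, or can be carried into, $\mathfrak{a}$: the only group preserving both $L_0$ and $\mathrm{Ad}(a)L_1$ is $K_0 \cap aK_1a^{-1}$, and nothing guarantees its orbits on the intersection meet $\mathfrak{a}$. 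Consequently your tangent-space computation, performed only at points of $M \cap \mathfrak{a}$, establishes transversality there but says nothing about possible intersection points elsewhere, so you cannot conclude discreteness or the equality $L_0 \cap \mathrm{Ad}(a)L_1 = M \cap \mathfrak{a}$.

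The paper bypasses this with one observation you are missing: since $L_i = M \cap \mathfrak{p}_i$ and $\mathrm{Ad}(a)M = M$, one has
\[
L_0 \cap \mathrm{Ad}(a)L_1 \;=\; (M \cap \mathfrak{p}_0) \cap \mathrm{Ad}(a)(M \cap \mathfrak{p}_1) \;=\; M \cap \bigl(\mathfrak{p}_0 \cap \mathrm{Ad}(a)\mathfrak{p}_1\bigr).
\]
Now the same $\sin\langle\lambda,H\rangle$, $\cos\langle\alpha,H\rangle$ computation you had in mind is applied to the \emph{linear} subspace $\mathfrak{p}_0 \cap \mathrm{Ad}(a)\mathfrak{p}_1$ of $\mathfrak{g}$ rather than to tangent spaces, and one reads off directly that $\mathfrak{p}_0 \cap \mathrm{Ad}(a)\mathfrak{p}_1 = \mathfrak{a}$ exactly when $H$ is regular. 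This immediately gives $L_0 \cap \mathrm{Ad}(a)L_1 = M \cap \mathfrak{a}$ globally, with no reduction step needed. For the converse, when $H$ is singular the paper does not argue via non-transversality but writes down an explicit curve $t \mapsto \mathrm{Ad}(\exp t S_\alpha)X$ lying in the intersection; this requires choosing $X \in W(\Sigma)x_0$ with $\langle\alpha,X\rangle \neq 0$, which in turn needs the spanning property of the Weyl orbit of $\alpha$ (Lemma~\ref{lem:triad-span} and Lemma~\ref{lem:span})---a point your outline does not address. Your treatment of antipodality via Theorem~\ref{thm:complexflag} and $\mathfrak{a} \subset \mathfrak{t}$ is correct and matches the paper.
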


Theorem~\ref{thm:1-1} (Theorem~\ref{thm:congruent}) has an interesting application as follows.
A Lagrangian submanifold $L$ in  $M=\mathrm{Ad}(G)x_0$ is said to be {\it globally tight} (resp.\ {\it locally tight})
if for all $g \in G$ (resp.\ $g$ near the identity) such that $L$ intersects $\mathrm{Ad}(g)L$ transversally, we have
$$
\#(L \cap \mathrm{Ad}(g)L)=SB(L;\mathbb Z_2),
$$
where $SB(L;\mathbb Z_2)$ denotes the sum of the $\mathbb Z_2$-Betti numbers of $L$.
This notion was introduced by Y.~G.~Oh \cite{Oh91} in the case where $M$ is a Hermitian symmetric space of compact type,
and then extended to the case of a complex flag manifold (see \cite{Elizabeth-Luiz-Fabricio2021}).
Note that a real flag manifold
$L$ intersects $\mathrm{Ad}(g)L$ transversally
if and only if
$L \cap \mathrm{Ad}(g)L$
is discrete,
since $\dim L = \frac{1}{2}\dim M$ and $L$ is totally geodesic in $M$.
In \cite{Elizabeth-Luiz-Fabricio2021}, 
Gasparim, San Martin and Valencia
gave many examples of locally tight Lagrangian submanifolds
in complex flag manifolds.
The tightness of a Lagrangian submanifold describes the equality condition of the so-called Arnold--Givental inequality
in symplectic geometry.

As an immediate consequence of Theorem~\ref{thm:1-1}, we have the following result,
which is a generalization of Corollary~1.6 in \cite{Tanaka-Tasaki2012}.

\begin{corollary}\label{cor:globally tight}
All real flag manifolds in a complex flag manifold are globally tight Lagrangian submanifolds.
\end{corollary}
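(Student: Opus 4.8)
The plan is to apply Theorem~\ref{thm:1-1} in the congruent case $L_0 = L_1 = L$, together with the Cartan decomposition of $G$ and the fact that the $2$-number of a real flag manifold equals the sum of its $\mathbb{Z}_2$-Betti numbers. Throughout, $L = \mathrm{Ad}_G(K)x_0$ is defined by $-\theta$, with $\mathfrak{p} = \{X \in \mathfrak{g} \mid -\theta(X)=X\}$ and $\mathfrak{a} \subset \mathfrak{p}$ a maximal abelian subspace containing $x_0$; this is exactly the data of Theorem~\ref{thm:1-1} with $\theta_0 = \theta_1 = \theta$, so $\mathfrak{p}_0 \cap \mathfrak{p}_1 = \mathfrak{p}$.

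First I would reduce an arbitrary $g \in G$ to an element of $\exp(\mathfrak{a})$. Since $\mathrm{Ad}(k)L = L$ for every $k \in K$, for $k_1, k_2 \in K$ the isometry $\mathrm{Ad}(k_1)$ carries $L \cap \mathrm{Ad}(\exp H)L$ onto
\[
\mathrm{Ad}(k_1)L \cap \mathrm{Ad}\bigl(k_1 (\exp H) k_2\bigr)L = L \cap \mathrm{Ad}\bigl(k_1 (\exp H) k_2\bigr)L,
\]
so both transversality and cardinality of the intersection depend only on the double coset $KgK$. Because $\mathfrak{a}$ is maximal abelian in $\mathfrak{p}$, the Cartan decomposition $G = K \exp(\mathfrak{a}) K$ writes any $g$ as $g = k_1 (\exp H) k_2$ with $H \in \mathfrak{a}$, and it therefore suffices to treat $g = a = \exp H$. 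By Theorem~\ref{thm:1-1} (Theorem~\ref{thm:congruent}), using $\dim L = \tfrac12 \dim M$ and $L$ being totally geodesic, $L$ meets $\mathrm{Ad}(a)L$ transversally precisely when $H$ is a regular point of the triad, and in that case $L \cap \mathrm{Ad}(a)L = M \cap \mathfrak{a} = W(\tilde\Sigma)x_0$; in particular the cardinality $\#\bigl(L \cap \mathrm{Ad}(a)L\bigr) = \#\bigl(W(\tilde\Sigma)x_0\bigr)$ is independent of the chosen regular $H$.

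It then remains to identify this common cardinality with $SB(L;\mathbb{Z}_2)$. Here I would use that $W(\tilde\Sigma)x_0 = M \cap \mathfrak{a}$ lies in $L$, since $\mathfrak{a} \subset \mathfrak{p}$ is pointwise fixed by $-\theta$, and that it is a \emph{great} antipodal set of the real flag manifold $L$, so that its cardinality realizes the $2$-number $\#_2 L$; combined with Takeuchi's theorem, recalled among the facts of Section~\ref{sec:real flag manifolds}, that $\#_2 L = SB(L;\mathbb{Z}_2)$ holds for every real flag manifold, this yields $\#\bigl(L \cap \mathrm{Ad}(g)L\bigr) = SB(L;\mathbb{Z}_2)$ for all $g$ giving a transversal intersection, i.e.\ $L$ is globally tight. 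The step requiring the most care is this final identification: one must know not merely that $M \cap \mathfrak{a}$ is \emph{an} antipodal set (which Theorem~\ref{thm:1-1} already provides inside $M$) but that it is of maximal cardinality within $L$, and that the topological equality $\#_2 L = SB(L;\mathbb{Z}_2)$ is available for all real flag manifolds. This is precisely where the structure theory of $R$-spaces, rather than the transversality analysis of Theorem~\ref{thm:1-1}, carries the argument.
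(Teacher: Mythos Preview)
Your reduction to $a=\exp H\in\exp\mathfrak a$ via $G=K\exp(\mathfrak a)K$ and the invocation of Theorem~\ref{thm:congruent} match the paper exactly, including the remark that transversality is equivalent to discreteness because $L$ is totally geodesic and half-dimensional. Where you diverge is in the last step: the paper does not pass through $\#_2L$ and a ``great antipodal set'' argument at all. It simply quotes the equality $\#\bigl(W(R)x_0\bigr)=SB(L;\mathbb Z_2)$ from \cite[p.~86]{Berndt-Console-Fino2001}, where this is proved directly for $R$-spaces (via a Morse-theoretic count of critical points of a height function), and concludes.

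Your alternative route is in spirit correct but has two soft spots you should tighten. First, Section~\ref{sec:real flag manifolds} does not actually recall any statement of the form $\#_2L=SB(L;\mathbb Z_2)$; that equality is Takeuchi's theorem only for \emph{symmetric} $R$-spaces, and a general real flag manifold $L$ is not a Riemannian symmetric space, so Chen--Nagano's $2$-number is not even defined for it in the original sense. What is true is S\'anchez's extension (the ``index number'' of an $R$-space in \cite{Sanchez1997}) and the identification in \cite{Berndt-Console-Fino2001}, which together give exactly $\#\bigl(W(R)x_0\bigr)=SB(L;\mathbb Z_2)$; but that is already the paper's one-line citation, so your detour collapses to it. Second, asserting that $M\cap\mathfrak a$ is a \emph{great} antipodal set of $L$ presupposes a notion of antipodal set in $L$ and a proof of maximality; neither is supplied in the paper for general $L$, and you would need the $k$-symmetric framework to make this precise. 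So while your argument can be completed, the clean fix is to replace the $\#_2L$ paragraph by the direct citation $\#\bigl(W(R)x_0\bigr)=SB(L;\mathbb Z_2)$, exactly as the paper does.
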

We give a proof of Corollary~\ref{cor:globally tight} in Section~\ref{sec:congruent intersection}.

\section{Complex flag manifolds and their antipodal sets}
\label{sec:complex flag manifolds}

In this section, first we recall invariant K\"ahler structures
on a complex flag manifold
(see \cite{Alekseevsky1997}, \cite{Arvanitoyeorgos} and \cite[Chapter 8]{Besse} for details).
Then we describe antipodal sets of a complex flag manifold.

Let $G$ be a compact connected semisimple Lie group
and $\mathfrak g$ its Lie algebra.
We take a non-zero vector $x_0$ in $\mathfrak{g}$
and consider its orbit
$$
M:=\mathrm{Ad}_G(G)x_0 \subset \mathfrak{g}
$$
under the adjoint representation $\mathrm{Ad}_G$ of $G$.
While there is no confusion, we write $\mathrm{Ad}$ instead of $\mathrm{Ad}_G$
for simplicity.
Let $\mathfrak{g} = \sum_{i=1}^N \mathfrak{g}_i$ be a decomposition
of the semisimple Lie algebra $\mathfrak{g}$ into a direct sum of simple ideals
$\mathfrak{g}_i \ (i=1, \ldots, N)$.
Then $M$ is decomposed into a direct product $M = M_1 \times \cdots \times M_N$,
where $M_i$ is an adjoint orbit of the connected simple Lie subgroup $G_i$ of $G$ whose Lie algebra is $\mathfrak{g}_i$.
If $\mathfrak{g}_i$-component of the base point $x_0$ of $M$ is zero,
then $M_i$ is a point.
Therefore, without loss of generality, here we may assume that
$\mathfrak{g}_i$-component of the base point $x_0$ of $M$ is non-zero for all $i$.

For each $X \in \mathfrak{g}$, let $X^*$ denote the fundamental vector field
on $M$ generated by $X$, i.e.,
$$
X^*_x := \frac{d}{dt}\Big|_{t=0} \mathrm{Ad}(\exp tX)x 
= [X, x] = -\mathrm{ad}(x)X \quad (x \in M).
$$
Since $G$ acts transitively on $M$,
the map $\mathfrak{g}\rightarrow T_x M$ defined by $X\mapsto X^*_x$ is surjective at each point $x \in M$.
Therefore $T_xM$ is expressed as
$$
T_xM = \mathrm{Im}(\mathrm{ad}(x)) \subset \mathfrak{g}
$$
for each point $x \in M$.
We give an $\mathrm{Ad}(G)$-invariant inner product $\langle \ , \ \rangle$ on $\mathfrak{g}$.
Then $M$ is regarded as a homogeneous submanifold in the Euclidean space $\mathfrak{g}$.
For each $x \in M$,
\begin{equation}\label{tangent space and normal space}
\mathfrak{g} = \mathrm{Im}(\mathrm{ad}(x)) \oplus \mathrm{Ker}(\mathrm{ad}(x))
\end{equation}
is an orthogonal direct sum decomposition with respect to $\langle \ ,\ \rangle$.
Hence $\mathrm{Ker}(\mathrm{ad}(x))$ is the normal space $T^\perp_xM$ at $x \in M$
in $\mathfrak{g}$.

Let $G_{x_0}$ be the stabilizer of $x_0$ in $G$, i.e.,
$$
G_{x_0} := \{g \in G \mid \mathrm{Ad}(g)x_0=x_0 \}.
$$
Then the adjoint orbit $M = \mathrm{Ad}(G)x_0$ is diffeomorphic
to the coset manifold $G/G_{x_0}$.
The Lie algebra $\mathfrak{g}_{x_0}$ of $G_{x_0}$ is 
the centralizer of $x_0$ in $\mathfrak{g}$, i.e.,
$$
\mathfrak{g}_{x_0}
= \{ X \in \mathfrak{g} \mid [X, x_0]=0 \}
= \mathrm{Ker}(\mathrm{ad}(x_0)).
$$
In the orthogonal direct sum decomposition (\ref{tangent space and normal space}) at $x_0$,
the subspace $\mathfrak{m} := \mathrm{Im}(\mathrm{ad}(x_0))$ is an $\mathrm{Ad}(G_{x_0})$-invariant orthogonal complement
of $\mathfrak{g}_{x_0} = \mathrm{Ker}(\mathrm{ad}(x_0))$ in $\mathfrak{g}$.
Hence $G/G_{x_0}$ is a reductive homogeneous space, and
the tangent space $T_{o}(G/G_{x_0})$ of $G/G_{x_0}$ at the origin
$o := G_{x_0}$ is also identified with $\mathfrak{m} = \mathrm{Im}(\mathrm{ad}(x_0))$.
Note that the linear transformation on $\mathfrak{m}$ defined by
$$
T_o(G/G_{x_0}) \cong
\mathfrak{m} \ni X \longmapsto X_{x_0}^* = [X,x_0] \in \mathfrak{m}=T_{x_0}(\mathrm{Ad}(G)x_0)
$$
gives the identification of $\mathfrak{m} \cong T_o(G/G_{x_0})$ with 
$\mathfrak{m}=T_{x_0}(\mathrm{Ad}(G)x_0)$.

The closure $S := \overline{\exp(\mathbb{R}x_0)}$
of the one-parameter subgroup $\exp(\mathbb{R}x_0)$ in $G$
forms a toral subgroup in $G_{x_0}$,
since $x_0$ is in $\mathfrak{g}_{x_0}$.
Then we can show that $G_{x_0}$ coincides with the centralizer $Z_G(S)$ of $S$ in $G$, i.e.,
$$
G_{x_0} = Z_G(S) := \{ g \in G \mid gh = hg \ (\forall h \in S)\},
$$
in particular $G_{x_0}$ is connected,
and $G/G_{x_0}$ is simply-connected (See \cite{Borel}).

We take a maximal torus $T$ in $G$ containing $S$.
The Lie algebra $\mathfrak{t}$ of $T$ is a maximal abelian subalgebra
of $\mathfrak{g}$.
For $\alpha \in \mathfrak{t}$, 
we define the root space $\tilde{\mathfrak{g}}_\alpha$ by
$$
\tilde{\mathfrak{g}}_\alpha := \{ X \in \mathfrak{g}^\mathbb{C} \mid 
[H,X] = \sqrt{-1}\langle\alpha, H\rangle X \ (\forall H \in \mathfrak{t})\},
$$
where $\mathfrak{g}^\mathbb{C}$ denotes the complexification of $\mathfrak{g}$.
And we set the root system of $\mathfrak{g}$ by
$\Delta := \{ \alpha \in \mathfrak{t} \setminus \{0\} \mid \tilde{\mathfrak{g}}_\alpha \neq \{0\}\}$.
Since $T$ is contained in $G_{x_0}$, $T$ is also a maximal torus in $G_{x_0}$.
Hence $\Delta_{x_0} = \{ \alpha \in \Delta \mid \langle \alpha, x_0 \rangle = 0 \}$ is
the root system of $\mathfrak{g}_{x_0}$ with respect to $\mathfrak{t}$.
This implies the disjoint union $\Delta = \Delta_{x_0} \cup \Delta_M$ and the decompositions
$$
\mathfrak{g}^{\mathbb C} = \mathfrak{g}^{\mathbb C}_{x_0} \oplus \mathfrak{m}^{\mathbb C},\qquad
\mathfrak{g}_{x_0}^\mathbb{C}
= \mathfrak{t}^\mathbb{C}
\oplus \sum_{\alpha \in \Delta_{x_0}} \tilde{\mathfrak{g}}_\alpha, \qquad
\mathfrak{m}^\mathbb{C}
= \sum_{\alpha \in \Delta_M} \tilde{\mathfrak{g}}_\alpha,
$$
where $\Delta_M := \{ \alpha \in \Delta \mid \langle \alpha, x_0 \rangle \neq 0 \}$
is the set of {\it complementary roots}.
Since $x_0$ is not equal to $0$, we have $\Delta_M\not=\emptyset$.
The center $\mathfrak{z}:=\mathfrak{z}(\mathfrak{g}_{x_0})$ of $\mathfrak{g}_{x_0}$
and the Lie algebra $\mathfrak{s}$ of $S$
satisfy ${\mathbb R}x_0 \subset \mathfrak{s} \subset \mathfrak{z} \subset \mathfrak{t}$.
The orthogonal projection from $\mathfrak{t}$
to $\mathfrak{z}$ is denoted by $H \mapsto \tilde{H}$.
The projected vector $\tilde\alpha$ of a root $\alpha \in \Delta_M$ into $\mathfrak{z}$
is called a {\it $T$-root},
and $\Delta_T$ denotes the set of all $T$-roots.
We note that $\Delta_T$ is invariant under the multiplication by $-1$,
but not a root system in $\mathfrak{z}$ in general.
Then we have direct sum decompositions
$$
\mathfrak{m}^\mathbb{C}
= \sum_{\lambda \in \Delta_T} \tilde{\mathfrak{m}}_\lambda, \quad \text{where} \quad
\tilde{\mathfrak{m}}_\lambda
:= \sum_{\alpha \in \Delta_M \atop \tilde\alpha = \lambda} \tilde{\mathfrak{g}}_\alpha,
$$
and
$$
\mathfrak{m} = \sum_{\lambda \in \Delta_T^+} \mathfrak{m}_\lambda, \quad \text{where} \quad
\mathfrak{m}_\lambda
:= (\tilde{\mathfrak{m}}_\lambda \oplus \tilde{\mathfrak{m}}_{-\lambda}) \cap \mathfrak{m},
$$
where $\Delta_T^+$ is the set of positive $T$-roots with respect to an ordering as explained below.
We can easily check that
each $\mathfrak{m}_\lambda$ is an $\mathrm{Ad}(G_{x_0})$-invariant subspace in $\mathfrak{m}$.

A connected component $C$ of the open dense subset $\mathfrak{z}_{\mathrm{reg}}$ of $\mathfrak{z}$ defined by
$$
\mathfrak{z}_\mathrm{reg} := \{ H \in \mathfrak{z} \mid \langle\lambda,H\rangle \neq 0 \ (\forall \lambda \in \Delta_T)\}
$$
is called a {\it $T$-chamber}.
For a regular element $H \in \mathfrak{z}_{\mathrm{reg}}$,
we define $\Delta_T^+ = \{ \lambda \in \Delta_T \mid \langle H,\lambda \rangle > 0 \}$
and $\Delta_T^- = \{ -\lambda \mid \lambda \in \Delta_T^+ \}$.
Then $\Delta_T$ is decomposed as
$\Delta_T = \Delta_T^+ \cup \Delta_T^-$.
Note that the definition of $\Delta_T^+$ does not depend on the choice of
regular elements in the same $T$-chamber.
We also have $\Delta_M = \Delta_M^+ \cup \Delta_M^-$,
where $\alpha \in \Delta_M$ is positive if and only if $\tilde\alpha \in \Delta_T$ is positive.
We regard $\mathfrak{m}^\mathbb{C}$ as an even dimensional real vector space,
then we define a complex structure $J$ on
$\mathfrak{m}^\mathbb{C}$ by
$$
JX = \begin{cases}
\sqrt{-1}X & (\text{if}\ X \in \tilde{\mathfrak{g}}_{\alpha}),\\
-\sqrt{-1}X & (\text{if}\ X \in \tilde{\mathfrak{g}}_{-\alpha})
\end{cases}
$$
for $\alpha \in \Delta_M^+$.
This induces an $\mathrm{Ad}(G_{x_0})$-invariant complex structure on $\mathfrak{m}$.
Since $\mathfrak{m} = T_o(G/G_{x_0})$ is isomorphic to $\mathrm{Im}(\mathrm{ad}(x_0)) = T_{x_0}M$ by
$$
\mathfrak{m} \ni X \longleftrightarrow X_{x_0}^* = [X,x_0] \in \mathrm{Im}(\mathrm{ad}(x_0)),
$$
the complex structure $J_{x_0}$ on $T_{x_0}M$ 
corresponding to $J$ on $\mathfrak{m}$ is given by
$$
J_{x_0} X_{x_0}^* = (JX)_{x_0}^* = [JX, x_0] \qquad (X \in \mathfrak{m}).
$$
Since $J$ is $\mathrm{Ad}(G_{x_0})$-invariant, we then obtain a $G$-invariant almost complex structure on $M$,
moreover we can verify that it is integrable (see \cite{Arvanitoyeorgos}, \cite{Besse}).

Hereafter we fix a $G$-invariant complex structure $J_0$,
which is called the {\it canonical complex structure},
on $M$ corresponding to the $T$-chamber
containing the base point $x_0 \in \mathfrak{z}_\mathrm{reg}$.
Then $\Delta_T^+ = \{ \lambda \in \Delta_T \mid \langle x_0, \lambda \rangle >0 \}$.
We note that $J_0$ is independent of the choice
of a maximal torus $T$ of $G$ containing $S$.
We give a $2$-form $\omega$ on $M$ by
\begin{equation} \label{eq:Kirillov-Kostant-Souriau form}
\omega(X_x^*, Y_x^*) := \langle x, [X,Y] \rangle
\end{equation}
for $x \in M$ and $X,Y \in \mathfrak{g}$.
The $2$-form $\omega$ is a $G$-invariant symplectic form on $M$,
which is called the {\it Kirillov-Kostant-Souriau form}.
Then the corresponding symmetric $2$-form $(\ , \ )$ on $M$
defined by $(\cdot, \cdot) := \omega(\cdot, J_0\cdot)$ is positive definite.
Consequently $M$ is equipped with a $G$-invariant K\"ahler structure $(J_0, \omega)$,
and we call $(M, J_0, \omega)$ a {\it complex flag manifold}.

The Ricci form $\rho$ of the complex flag manifold $(M, J_0, \omega)$ is represented as
\begin{equation} \label{eq:Ricci form}
\rho_{x_0}(X_{x_0}^*, Y_{x_0}^*) = \langle \delta, [X,Y] \rangle
\qquad (X,Y \in \mathfrak{g}),
\end{equation}
where $\delta := \sum_{\lambda \in \Delta_T^+} m_\lambda \lambda$ and $m_\lambda$ is the multiplicity
of $\lambda \in \Delta_T^+$ (see \cite[Chapter~8.C]{Besse}, \cite{Takeuchi2}).
Since $\rho$ is positive definite with respect to $J_0$,
the Ricci form $\rho$ becomes a K\"ahler form on $(M, J_0)$.
Note that each adjoint orbit admits a K\"ahler-Einstein structure
with positive Ricci curvature,
uniquely up to homothety.
Indeed, consider the adjoint orbit $M' = \mathrm{Ad}(G)\delta$ whose
base point is $\delta$.
Since $\delta$ is in $\mathfrak{z}_{\mathrm{reg}}$ and $\delta$ is in the same $T$-chamber with $x_0$,
$M'$ is diffeomorphic to $M$ and has the same $G$-invariant complex structure $J_0$.
Moreover, from the Kirillov-Kostant-Souriau form $\omega$ on $M'$
defined as (\ref{eq:Kirillov-Kostant-Souriau form})
and its Ricci form $\rho$ as (\ref{eq:Ricci form}),
we verify that 
$\omega = c \rho$ on $M'$ for some positive constant $c$ and hence
the complex flag manifold $(M', J_0, \omega)$ is K\"ahler-Einstein.

Now we shall define an antipodal set of a complex flag manifold $M$.
For a point $x$ in $M$,
let $G_x$ be the stabilizer at $x$ of $G$
and $Z(G_x)_0$ the identity component of the center $Z(G_x)$ of $G_x$.
Then $Z(G_x)_0$ is a toral subgroup of $G_x$.

\begin{definition}\label{dfn:antipodal set}
A point $y \in M$ is said to be {\it antipodal} to $x \in M$,
if $\mathrm{Ad}(g)(y) = y$ for all $g\in Z(G_x)_0$.
A subset $\mathcal{A}$ in $M$ is called an {\it antipodal set}
if $y$ is antipodal to $x$ for any $x,y \in \mathcal{A}$.
An antipodal set is said to be \textit{maximal}
if it is maximal with respect to inclusion relation.
\end{definition}

Maximal antipodal sets of $M$ can be described as follows.

\begin{theorem}\label{thm:complexflag}
For $x,y \in M$, $y$ is antipodal to $x$ if and only if $[x, y] = 0$.
For any maximal antipodal set $\mathcal{A}$ of $M$,
there exists a maximal abelian subalgebra $\mathfrak t'$ of $\mathfrak g$
such that $\mathcal{A} = M \cap \mathfrak t'$.
Hence $\mathcal{A}$ is an orbit $W(\Delta')x_1$
of the Weyl group $W(\Delta')$ of the root system $\Delta'$ of $\mathfrak{g}$
with respect to $\mathfrak t'$, where $x_1 \in \mathcal{A}$.
In particular, all maximal antipodal sets of $M$ are congruent
to each other under the action of $G$.
\end{theorem}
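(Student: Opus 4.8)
The plan is to reduce antipodality to a bracket condition in $\mathfrak{g}$, identify antipodal sets with commuting subsets of $M$, and then apply standard maximal-torus structure theory. First I would note that for $x=\mathrm{Ad}(h)x_0\in M$ the stabilizer $G_x=hG_{x_0}h^{-1}$ is connected, since $G_{x_0}=Z_G(S)$ is. Hence the torus $Z(G_x)_0$ has Lie algebra equal to the center $\mathfrak{z}(\mathfrak{g}_x)$ of $\mathfrak{g}_x=\mathrm{Ker}(\mathrm{ad}(x))$, and because $Z(G_x)_0$ is connected, the condition $\mathrm{Ad}(g)y=y$ for all $g\in Z(G_x)_0$ is equivalent to $[Z,y]=0$ for all $Z\in\mathfrak{z}(\mathfrak{g}_x)$. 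Thus ``$y$ antipodal to $x$'' means precisely that $y$ centralizes $\mathfrak{z}(\mathfrak{g}_x)$.

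The substantive step is to show this is equivalent to $[x,y]=0$. One implication is immediate: $x\in\mathfrak{g}_x$ commutes with all of $\mathfrak{g}_x$, so $x\in\mathfrak{z}(\mathfrak{g}_x)$, whence centralizing $\mathfrak{z}(\mathfrak{g}_x)$ forces $[x,y]=0$. For the converse, assuming $[x,y]=0$ I would take a maximal abelian subalgebra $\mathfrak{t}'$ of the compact algebra $\mathfrak{g}_x$ containing $y$. Since the central subalgebra $\mathfrak{z}(\mathfrak{g}_x)$ commutes with $\mathfrak{t}'$, it lies in $\mathfrak{t}'$ by maximality, and a short argument shows $\mathfrak{t}'$ is in fact maximal abelian in all of $\mathfrak{g}$ (anything commuting with $\mathfrak{t}'\ni x$ lies in $\mathfrak{g}_x$, hence in $\mathfrak{t}'$). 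As $y$ and $\mathfrak{z}(\mathfrak{g}_x)$ all lie in the abelian $\mathfrak{t}'$, we obtain $[\mathfrak{z}(\mathfrak{g}_x),y]=0$. This establishes the first assertion.

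Next, the criterion says a subset of $M$ is antipodal exactly when its elements pairwise commute. Given a maximal antipodal set $\mathcal{A}$, its linear span is an abelian subalgebra and hence sits inside some maximal abelian subalgebra $\mathfrak{t}'$ of $\mathfrak{g}$; then every two points of $M\cap\mathfrak{t}'$ commute, so $M\cap\mathfrak{t}'$ is itself an antipodal set containing $\mathcal{A}$, and maximality forces $\mathcal{A}=M\cap\mathfrak{t}'$.

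Finally, writing $\Delta'$ and $W(\Delta')$ for the root system and Weyl group of the Cartan subalgebra $\mathfrak{t}'$, and choosing $x_1\in\mathcal{A}$ so that $M=\mathrm{Ad}(G)x_1$, I would invoke the classical fact that two elements of $\mathfrak{t}'$ are $\mathrm{Ad}(G)$-conjugate if and only if they are $W(\Delta')$-conjugate; this yields $\mathcal{A}=M\cap\mathfrak{t}'=\mathrm{Ad}(G)x_1\cap\mathfrak{t}'=W(\Delta')x_1$. Congruence is then formal: all maximal tori of $G$ are $\mathrm{Ad}(G)$-conjugate and $M$ is $\mathrm{Ad}(G)$-invariant, so $M\cap\mathrm{Ad}(g)\mathfrak{t}'=\mathrm{Ad}(g)(M\cap\mathfrak{t}')$. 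The only nonelementary ingredient is the Weyl-conjugacy fact; among the elementary steps I expect the converse bracket implication in the second paragraph to require the most care, though it remains short.
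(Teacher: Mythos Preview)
Your proposal is correct and follows essentially the same route as the paper: reduce antipodality to the bracket condition, use that to embed a maximal antipodal set in a Cartan subalgebra, and then invoke the Weyl-conjugacy theorem (the paper cites Helgason, Chapter~VII, Proposition~2.2) together with conjugacy of maximal tori.

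One remark on the step you flagged as most delicate: your converse argument for $[x,y]=0\Rightarrow[\mathfrak{z}(\mathfrak{g}_x),y]=0$ via an auxiliary maximal abelian subalgebra $\mathfrak{t}'\subset\mathfrak{g}_x$ is valid, but it is an unnecessary detour. The paper's proof is one line: $[x,y]=0$ means $y\in\mathfrak{g}_x$, and the center $\mathfrak{z}(\mathfrak{g}_x)$ commutes with every element of $\mathfrak{g}_x$ by definition, in particular with $y$. No Cartan subalgebra is needed for this step.
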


\begin{proof}
If $y \in M$ is antipodal to $x \in M$,
then $\mathrm{Ad}(g)y = y$ holds for all $g \in Z(G_x)_0$.
It is equivalent to $[X,y]=0$ for all $X \in \mathfrak{z}(\mathfrak{g}_x)$.
In particular, it follows $[x,y] = 0$, since $x$ is in $\mathfrak{z}(\mathfrak{g}_x)$.
Conversely, if $[x,y]=0$,
then $y$ is in $\mathfrak{g}_x$.
Therefore $[X,y]=0$ holds for all $X \in \mathfrak{z}(\mathfrak{g}_x)$,
hence $\mathrm{Ad}(g)y = y$ for all $g \in Z(G_x)_0$.

Let $\mathcal{A}$ be a maximal antipodal set of $M$.
Since $[x,y]=0$ for all $x,y \in \mathcal{A}$,
the vector subspace spanned by $\mathcal{A}$ is
an abelian subalgebra of $\mathfrak{g}$.
Thus there exists a maximal abelian subalgebra $\mathfrak{t}'$ which contains $\mathcal{A}$.
Hence $\mathcal{A} \subset M \cap \mathfrak{t}'$.
Clearly $M \cap \mathfrak{t}'$ is an antipodal set of $M$.
Therefore we have $\mathcal{A} = M \cap \mathfrak{t}'$,
since $\mathcal{A}$ is a maximal antipodal set.
It follows from \cite[p.~285, Proposition~2.2]{Helgason}
that $M \cap \mathfrak{t}'$ is an orbit $W(\Delta')x_1$
of the Weyl group $W(\Delta')$ of $\mathfrak{g}$.
From the uniqueness of a maximal abelian subalgebra of $\mathfrak{g}$ under the action of $G$,
we can verify that all maximal antipodal sets of $M$ are congruent
to each other under $G$.
\end{proof}

\begin{corollary}
For any $x, y \in M$,
$x$ is antipodal to $y$ if and only if $y$ is antipodal to $x$.
\end{corollary}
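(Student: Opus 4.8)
The plan is to reduce the assertion entirely to the bracket characterization established in Theorem~\ref{thm:complexflag}. That theorem gives, for any $x, y \in M$, the equivalence that $y$ is antipodal to $x$ if and only if $[x,y]=0$. The symmetry of the antipodal relation should then follow purely formally from the antisymmetry of the Lie bracket, with no further geometric input required.

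First I would apply Theorem~\ref{thm:complexflag} to the ordered pair $(x,y)$ to record that $y$ is antipodal to $x$ exactly when $[x,y]=0$. Next I would apply the same theorem to the ordered pair $(y,x)$, which asserts that $x$ is antipodal to $y$ exactly when $[y,x]=0$. Since the Lie bracket is antisymmetric, $[y,x]=-[x,y]$, so the two conditions $[x,y]=0$ and $[y,x]=0$ coincide. Chaining these equivalences yields that $x$ is antipodal to $y$ if and only if $y$ is antipodal to $x$.

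I do not anticipate any genuine obstacle: all the substance is packed into the bracket characterization of Theorem~\ref{thm:complexflag}, and once that is in hand the corollary is a one-line consequence of the identity $[x,y]=-[y,x]$. The only point worth noting is that Definition~\ref{dfn:antipodal set} of antipodality is a priori asymmetric (it quantifies over $Z(G_x)_0$, not $Z(G_y)_0$), so this corollary is precisely the statement that the symmetric-space intuition survives in the flag-manifold setting, and the bracket criterion is what makes that transparent.
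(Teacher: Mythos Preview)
Your proposal is correct and matches the paper's intended argument: the corollary is stated without proof immediately after Theorem~\ref{thm:complexflag}, as an immediate consequence of the bracket criterion $[x,y]=0$ together with the antisymmetry of the Lie bracket. Your write-up makes explicit exactly the one-line reasoning the paper leaves implicit.
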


\begin{remark}
The notion of an antipodal set of a complex flag manifold $M$
was defined in \cite{IST2014}
by using $k$-symmetric structures on $M$.
Theorem~\ref{thm:complexflag} corresponds to Theorem~1 of \cite{IST2014}.
It implies that the notion of an antipodal set given by Definition~\ref{dfn:antipodal set}
is equivalent to that given in \cite{IST2014}.
In particular, when $M$ is a Hermitian symmetric space of compact type,
it is also equivalent to the notion of an antipodal set introduced in \cite{Chen-Nagano1988}.
\end{remark}

\section{Real flag manifolds in a complex flag manifold}
\label{sec:real flag manifolds}

In this section, we define a real flag manifold and show that
it is embedded in a complex flag manifold as a real form.
In this paper, for a set $\mathcal{S}$ and a map $\phi:\mathcal{S} \rightarrow \mathcal{S}$,
we set $F(\phi,\mathcal{S})=\{ x \in \mathcal{S} \mid \phi (x)=x\}$.
In the case where $G$ is a Lie group and $\phi$ is an automorphism of $G$,
let $F(\phi,G)_0$ denote the identity component of $F(\phi,G)$.

Let $G$ be a compact connected semisimple Lie group,
and let $K$ be the identity component of 
$F(\tilde\theta,G)$ for an involution $\tilde\theta$ on $G$.
Then $(G, K)$ is a symmetric pair of compact type.
The Lie algebras of $G$ and $K$ are denoted by $\mathfrak{g}$ and $\mathfrak{k}$, respectively.
The involution $\tilde\theta$ of $G$ induces an involution $\theta$ on $\mathfrak{g}$,
and we have a canonical decomposition of $\mathfrak{g}$:
$$
\mathfrak{g} = \mathfrak{k} \oplus \mathfrak{p},
$$
where $\mathfrak{p} := \{ X \in \mathfrak{g} \mid \theta(X) =-X \}
= F(-\theta, \mathfrak{g})$.

For a non-zero vector $x_0$ in $\mathfrak{p}$,
we consider a complex flag manifold $(M:=\mathrm{Ad}(G)x_0, J_0, \omega)$ in $\mathfrak{g}$
as explained in Section~\ref{sec:complex flag manifolds}.
Note that here we assume that the base point $x_0$ of $M$ has a non-zero component
in each simple ideal of $\mathfrak{g}$.
In addition, we suppose that 
the inner product $\langle \ , \ \rangle$ on $\mathfrak{g}$ is invariant under $\theta$.

\begin{proposition}\label{prop:anti-holomorphic involution}
The restriction $\tau:=-\theta|_M$ of $-\theta$ to $M$
provides an anti-holomorphic involutive isometry on the complex flag manifold $(M,J_0,\omega)$.
\end{proposition}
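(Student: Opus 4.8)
The plan is to verify the three required properties of $\tau=-\theta|_M$ in turn: that $\tau$ preserves $M$ and is involutive, that it is an isometry, and that it is anti-holomorphic with respect to $J_0$. First I would check that $\tau$ maps $M$ to itself. Since $-\theta(x_0)=x_0$ by the hypothesis $x_0\in\mathfrak{p}$, and since $\theta$ is a Lie algebra involution, for any $g\in G$ we have $-\theta(\mathrm{Ad}(g)x_0)=\mathrm{Ad}(\tilde\theta(g))(-\theta(x_0))=\mathrm{Ad}(\tilde\theta(g))x_0\in M$. Hence $\tau(M)\subseteq M$, and $\tau\circ\tau=(-\theta)^2|_M=\theta^2|_M=\mathrm{id}$, so $\tau$ is an involution of $M$. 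That $\tau$ is an isometry of the ambient Euclidean structure is immediate from the assumption that $\langle\ ,\ \rangle$ is $\theta$-invariant (hence $(-\theta)$-invariant); since $M$ carries the induced metric, $\tau|_M$ is an isometry of $M$.

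The substantive point is the anti-holomorphicity, i.e. that $d\tau\circ J_0=-J_0\circ d\tau$ on $TM$. I would work at the base point $x_0$ and then propagate by equivariance. At $x_0$, recall $T_{x_0}M=\mathrm{Im}(\mathrm{ad}(x_0))=\mathfrak{m}$ with the identification $X\mapsto X^*_{x_0}=[X,x_0]$, and the complex structure is $J_0[X,x_0]=[J_0X,x_0]$ where $J_0$ acts on $\mathfrak{m}$ by $\pm\sqrt{-1}$ on $\tilde{\mathfrak{g}}_{\pm\alpha}$ for $\alpha\in\Delta_M^+$. The differential of $\tau=-\theta$ is just the linear map $-\theta$ on $\mathfrak{g}$, restricted to $T_{x_0}M$. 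The key computation is to track how $-\theta$ interacts with the root-space decomposition. Since $-\theta$ fixes $x_0$, it preserves $\mathfrak{t}$ (or at least the relevant toral data) and sends the root space $\tilde{\mathfrak{g}}_\alpha$ to $\tilde{\mathfrak{g}}_{\sigma(\alpha)}$ for some map $\sigma$ on roots induced by $\theta$; the crucial claim is that $-\theta$ reverses the sign pattern that defines $J_0$, so that $d\tau(J_0 v)=-J_0\,d\tau(v)$.

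The heart of the argument, and the step I expect to be the main obstacle, is establishing that the involution $-\theta$ sends each $J_0$-holomorphic summand to a $J_0$-anti-holomorphic summand. Concretely, because $\theta$ is a real (that is, $\mathbb{R}$-linear) involution of $\mathfrak{g}$, its complexification is a $\mathbb{C}$-linear involution of $\mathfrak{g}^{\mathbb{C}}$, and conjugation by $-\theta$ sends $\mathrm{ad}(x_0)$ to $\mathrm{ad}(-\theta x_0)=\mathrm{ad}(x_0)$; combined with the fact that $J_0$ is built from the eigenvalue decomposition of $\mathrm{ad}(x_0)$ (equivalently, the sign of $\langle x_0,\tilde\alpha\rangle$), one must show that $-\theta$ interchanges the $\pm\sqrt{-1}$ eigenspaces of $J_0$. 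The clean way to see this is to observe that $J_0$ on $\mathfrak{m}$ can be written intrinsically, on $\mathrm{Im}(\mathrm{ad}(x_0))$, as a function of $\mathrm{ad}(x_0)$ (since $\mathrm{ad}(x_0)$ acts as multiplication by $\sqrt{-1}\langle\alpha,x_0\rangle$ on $\tilde{\mathfrak{g}}_\alpha$, the operator $J_0$ agrees with the sign of $\mathrm{ad}(x_0)/\sqrt{-1}$). Then the identity $\theta\circ\mathrm{ad}(x_0)=\mathrm{ad}(\theta x_0)\circ\theta=\mathrm{ad}(-x_0)\circ\theta=-\mathrm{ad}(x_0)\circ\theta$ forces $-\theta$ to anticommute with $J_0$, giving the desired relation $d\tau\circ J_0=-J_0\circ d\tau$ at $x_0$. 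Finally, I would extend this from $x_0$ to an arbitrary point of $M$ using the $G$-equivariance of $J_0$ together with the intertwining relation $\tau\circ\mathrm{Ad}(g)=\mathrm{Ad}(\tilde\theta(g))\circ\tau$, which transports the anti-holomorphicity at $x_0$ to all of $M$ and completes the proof.
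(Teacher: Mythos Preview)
Your anti-holomorphicity argument is correct and actually somewhat cleaner than the paper's. The paper works with two maximal tori $T$ and $T'=\tilde\theta(T)$, both containing $S$, and uses that $J_0$ is independent of this choice to compare how $\theta$ moves root spaces; you instead observe that on $\mathfrak{m}^{\mathbb C}$ the operator $J_0$ is an odd function of $\mathrm{ad}(x_0)$ (its $\pm\sqrt{-1}$-eigenspaces are precisely the positive/negative imaginary eigenspaces of $\mathrm{ad}(x_0)$), and combine this with $\theta\,\mathrm{ad}(x_0)=-\mathrm{ad}(x_0)\,\theta$ to get $\theta J_0=-J_0\theta$. That bypasses any torus bookkeeping. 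The equivariance extension to all of $M$ is the same in both approaches.

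There is, however, a genuine gap in your isometry step. You write that ``$M$ carries the induced metric,'' and conclude that $\theta$-invariance of $\langle\ ,\ \rangle$ makes $\tau$ an isometry. But the metric for which the proposition asserts $\tau$ is an isometry is the K\"ahler metric $(\cdot,\cdot)=\omega(\cdot,J_0\cdot)$ built from the Kirillov--Kostant--Souriau form, not the submanifold metric induced from $\langle\ ,\ \rangle$. These do not coincide in general: on a root space $\tilde{\mathfrak g}_\alpha$ the induced metric scales like $\langle\alpha,x_0\rangle^2$ while the K\"ahler metric scales like $\langle\alpha,x_0\rangle$, so they agree only in special situations (e.g.\ Hermitian symmetric orbits). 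The paper's route is to first prove that $\tau$ is anti-symplectic, via the one-line computation
\[
\omega\big((d\tau)X^*,(d\tau)Y^*\big)=\langle -\theta(x),[\theta X,\theta Y]\rangle=-\langle x,[X,Y]\rangle=-\omega(X^*,Y^*),
\]
and then to deduce isometry from anti-symplectic together with anti-holomorphic:
\[
(\tau_*u,\tau_*v)=\omega(\tau_*u,J_0\tau_*v)=-\omega(\tau_*u,\tau_*J_0v)=\omega(u,J_0v)=(u,v).
\]
You should insert this anti-symplectic check (which is immediate from the KKS formula and $\theta$-invariance of $\langle\ ,\ \rangle$) and derive the isometry from it, rather than from the ambient inner product.
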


\begin{proof}
Since $x_0$ is in $\mathfrak{p}$, we have
$$
\tau (M)=-\theta (\mathrm{Ad}(G)x_0)=-\mathrm{Ad}(\tilde{\theta}(G))\theta (x_0)=\mathrm{Ad}(G)x_0=M.
$$
Therefore $\tau$ defines an involutive diffeomorphism of $M$.

Any tangent vector of $M$ at $x \in M$ can be expressed as $X_x^* \in T_xM$ for some $X \in \mathfrak{g}$.
Also any tangent vector of $M$ at $\tau(x) \in M$ can be expressed as
$(d\tau)_x X_x^* = (\theta(X))_{\tau(x)}^* \in T_{\tau(x)}M$.
For $X,Y \in \mathfrak{g}$, we have
\begin{align*}
\omega \big( (d\tau)_x X_x^*, (d\tau)_x Y_x^* \big)
&= \omega \big( (\theta(X))_{\tau(x)}^*, (\theta(Y))_{\tau(x)}^* \big) \\
&= \langle \tau(x), [\theta(X), \theta(Y)] \rangle \\
&= \langle -\theta(x), \theta[X,Y] \rangle \\
&= -\langle x, [X,Y] \rangle \\
&= -\omega(X_x^*, Y_x^*).
\end{align*}
Thus $\tau : M \to M$ is anti-symplectic.

Next we will show that $\tau : M \to M$ is anti-holomorphic, i.e.,
$$
(J_0)_{\tau(x)} \circ (d\tau)_x = -(d\tau)_x \circ (J_0)_{x} \qquad (x \in M).
$$
For $x \in M$, we take $g_x \in G$ such that $x = \mathrm{Ad}(g_x)x_0$.
Any tangent vector in $T_x M$ can be given as
$$
\mathrm{Ad}(g_x)X_{x_0}^*
= \mathrm{Ad}(g_x)[X,x_0]
= [\mathrm{Ad}(g_x)X,x]
= \big( \mathrm{Ad}(g_x)X \big)_x^*
\qquad (X \in \mathfrak{m}).
$$
Then we have
\begin{align*}
(J_0)_{\tau(x)} (d\tau)_x \big(\mathrm{Ad}(g_x)X\big)_x^*
&= (J_0)_{\tau(x)} \big( \theta (\mathrm{Ad}(g_x)X) \big)_{\tau(x)}^* \\
&= (J_0)_{\tau(x)} \Big( \mathrm{Ad} \big(\tilde\theta(g_x)\big) \theta(X) \Big)_{\tau(x)}^* \\
&= (J_0)_{\tau(x)} \mathrm{Ad}\big(\tilde\theta(g_x)\big) \big( \theta(X) \big)_{x_0}^* \\
&= \mathrm{Ad}\big(\tilde\theta(g_x)\big) (J_0)_{x_0} \big( \theta(X) \big)^*_{x_0} \\
&= \mathrm{Ad}\big(\tilde\theta(g_x)\big) \big( J_0 \theta(X) \big)^*_{x_0},
\end{align*}
and
\begin{align*}
(d\tau)_x (J_0)_x \big(\mathrm{Ad}(g_x)X\big)_x^*
&= (d\tau)_x (J_0)_x \mathrm{Ad}(g_x) X_{x_0}^* \\
&= (d\tau)_x \mathrm{Ad}(g_x) (J_0)_{x_0}  X_{x_0}^* \\
&= (d\tau)_x \mathrm{Ad}(g_x) (J_0X)_{x_0}^* \\
&= (d\tau)_x \big( \mathrm{Ad}(g_x) J_0X \big)_x^* \\
&= \big( \theta (\mathrm{Ad}(g_x) J_0X) \big)_{\tau(x)}^* \\
&= \Big( \mathrm{Ad}\big(\tilde\theta(g_x)\big) \theta J_0X \Big)_{\tau(x)}^* \\
&= \mathrm{Ad} \big(\tilde\theta(g_x)\big) \big( \theta J_0(X) \big)_{x_0}^*.
\end{align*}
Therefore it suffices to show that $J_0 \circ \theta = -\theta \circ J_0$ holds on $\mathfrak{m}$.
Clearly $T':=\tilde{\theta}(T)$ is also a maximal torus of $G$ containing $S$.
The Lie algebra $\mathfrak{t}'$ of $T'$ coincides with $\theta (\mathfrak{t})$. 
Let $\Delta '$ denote the root system of $\mathfrak{g}$ with respect to $\mathfrak{t}'$.
Then a root $\alpha \in \Delta$ corresponds to a root $\alpha' := \theta(\alpha) \in \Delta'$.
Moreover, $\alpha$ is in $\Delta_M^+$ if and only if $\alpha' := \theta(\alpha)$ is in $(\Delta'_M)^-$,
since
$$
\langle \alpha', x_0 \rangle
= \langle \theta(\alpha), x_0 \rangle
= \langle \alpha, \theta(x_0) \rangle
= -\langle \alpha, x_0 \rangle.
$$
For $\alpha \in \Delta_M$,
$\theta(\tilde{\mathfrak{g}}_\alpha)$ is the root space in $\mathfrak{g}^{\mathbb{C}}$
of a root $\alpha' = \theta(\alpha)$ with respect to $\mathfrak{t}' = \theta(\mathfrak{t})$.
Thus, for $\alpha \in \Delta_M^+$, we have
\begin{align*}
&J_0 \theta(X) = -\sqrt{-1} \theta(X) = -\theta(\sqrt{-1}X) = -\theta J_0 X
&&(X \in \tilde{\mathfrak{g}}_\alpha), \\
&J_0 \theta(Y) = \sqrt{-1} \theta(Y) = \theta(\sqrt{-1}Y) = -\theta J_0 Y
&&(Y \in \tilde{\mathfrak{g}}_{-\alpha}).
\end{align*}
Hence $J_0 \circ \theta = -\theta \circ J_0$ holds on
$$
\mathfrak{m} = \sum_{\alpha \in \Delta_M^+}
(\tilde{\mathfrak{g}}_\alpha \oplus \tilde{\mathfrak{g}}_{-\alpha}) \cap \mathfrak{g}.
$$
Therefore we showed that $\tau$ is anti-holomorphic.  

Consequently, since $\tau$ is anti-symplectic and anti-holomorphic,
$\tau$ is isometric for the K\"ahler metric $(\cdot, \cdot) = \omega(\cdot, J_0\cdot)$.
\end{proof}

From Proposition~\ref{prop:anti-holomorphic involution}, the fixed point set
$$
L := M \cap \mathfrak{p} = F(\tau, M)
$$
of $\tau$ in $M$ is a real form.
In fact, it is known by \cite[Proposition 2.4]{Ohnita}
that $L$ is connected and $K$ acts on $L$ transitively by the adjoint representation of $G$,
hence
$$
L = \mathrm{Ad}_G(K) x_0 \subset \mathrm{Ad}_G(G) x_0 = M.
$$
We call $\mathrm{Ad}_G(K) x_0$ a {\it real flag manifold}.

\section{The intersection of two real flag manifolds}
\label{sec:intersection of real flag manifolds}

In this section we prove that the intersection of two real flag manifolds
is antipodal in a complex flag manifold if the intersection is discrete.
Moreover we show a necessary and sufficient condition
that the intersection of two real flag manifolds is discrete,
and describe the intersection when it is discrete.
We discuss it when two real flag manifolds are
congruent and when they are non-congruent separately.
We use the notation mentioned in the previous sections.

\subsection{Antipodal property of the intersection}

In this subsection we study the antipodal
property of the intersection of two real flag manifolds
in a complex flag manifold.

A real flag manifold $L$ of a complex flag manifold $M$ is determined by an involution of
$\mathfrak g$.
Let 
$$
\mathfrak{g} = \mathfrak{k} \oplus \mathfrak{p}
$$
be the canonical decomposition of $\mathfrak g$ corresponding to the involution.
Then we have $L=M\cap\mathfrak p$, and
$$
T_xM=[\mathfrak g,x] = [\mathfrak{k},x] \oplus [\mathfrak{p},x] \quad \text{(orthogonal direct sum)}
$$
for $x \in M$.
Furthermore
$$
T_xL=[\mathfrak g,x]\cap\mathfrak p = [\mathfrak{k},x]
$$
holds for $x \in L$.

In the rest of this subsection, we discuss two real flag manifolds of 
a complex flag manifold. 
Let $G$ be a compact connected semisimple Lie group and 
$(G, K_0, K_1)$ be a compact symmetric triad, which means that 
there exist two involutions $\tilde\theta_0$ and $\tilde\theta_1$ on $G$ 
such that $K_i = F(\tilde\theta_i, G)_0 \ (i=0,1)$.
Let $\mathfrak{g}$, $\mathfrak{k}_0$ and $\mathfrak{k}_1$
denote the Lie algebras of $G$, $K_0$ and $K_1$ respectively.
Let $\mathfrak{g} = \mathfrak{k}_i \oplus \mathfrak{p}_i \ (i=0,1)$
be the canonical decompositions of $\mathfrak g$ corresponding to $\theta_i$. 
Take $x_i \in \mathfrak{p}_i\setminus \{0\}$ and assume that there 
exists an element $g_0\in G$ such that $x_0=\mathrm{Ad}_G(g_0)x_1$. 
Set $M:=\mathrm{Ad}_G(G)x_0=\mathrm{Ad}_G(G)x_1$ and 
$L_i=\mathrm{Ad}_G(K_i)x_i$. 
Then $L_0$ and $L_1$ are two real flag manifolds of the  
complex flag manifold $M$.

\begin{lemma} \label{lem:commutative}
Under the above setting,
the intersection $L_0 \cap L_1$ is non-empty.
Furthermore,
the following three conditions are equivalent. 
\begin{enumerate}
\item $L_0 \cap L_1$ is discrete. 
\item $L_0\cap L_1$ is a commutative subset of $\mathfrak{g}$. 
\item $[L_0\cap L_1, \mathfrak p_0\cap\mathfrak p_1]=\{0\}$.
\end{enumerate}
Then $L_0\cap L_1$ is an antipodal set in $M$.
In particular, if $\mathfrak{p}_0\cap\mathfrak{p}_1$ is abelian, then $L_0 \cap L_1$ is discrete. 
\end{lemma}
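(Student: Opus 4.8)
The plan is to take Theorem~\ref{thm:complexflag} as the engine. Since that theorem says $y$ is antipodal to $x$ exactly when $[x,y]=0$, a subset of $M$ is an antipodal set precisely when it is commutative, and a commutative subset spans an abelian subalgebra and therefore lies in a finite Weyl-group orbit $M\cap\mathfrak t'$. With this I would link the three conditions in the cycle $(1)\Rightarrow(3)\Rightarrow(2)\Rightarrow(1)$, where the passage through $(2)$ simultaneously produces the antipodal conclusion. The non-emptiness of $L_0\cap L_1$ is logically independent of these equivalences, and I expect it to be the genuinely hard point; I would treat it last.

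The heart of the matter is $(1)\Rightarrow(3)$. Fix $x\in L_0\cap L_1$ and $Z\in\mathfrak p_0\cap\mathfrak p_1$. Because $x$ itself lies in $\mathfrak p_0\cap\mathfrak p_1$, both $Z$ and $x$ belong to $\mathfrak p_0$ and to $\mathfrak p_1$, so $[Z,x]\in\mathfrak k_0\cap\mathfrak k_1$ using $[\mathfrak p_i,\mathfrak p_i]\subset\mathfrak k_i$. Hence $\exp\!\big(t[Z,x]\big)\in K_0\cap K_1$, and the curve $c(t)=\mathrm{Ad}\!\big(\exp t[Z,x]\big)x$ runs through $\mathrm{Ad}(K_0)x=L_0$ and through $\mathrm{Ad}(K_1)x=L_1$, so it lies entirely in $L_0\cap L_1$. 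If $(1)$ holds, this connected curve must be constant, giving $c'(0)=[[Z,x],x]=0$. Pairing with $Z$ and using the $\mathrm{Ad}(G)$-invariance of $\langle\ ,\ \rangle$ (equivalently, the skew-symmetry of $\mathrm{ad}(x)$) yields
\[
0=\langle[[Z,x],x],Z\rangle=\langle[Z,x],[x,Z]\rangle=-\|[Z,x]\|^2,
\]
so $[Z,x]=0$. As $x\in L_0\cap L_1$ and $Z\in\mathfrak p_0\cap\mathfrak p_1$ were arbitrary, this is exactly $(3)$.

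The remaining implications are short. For $(3)\Rightarrow(2)$, any two points $x,y\in L_0\cap L_1$ satisfy $y\in\mathfrak p_0\cap\mathfrak p_1$, so $[x,y]\in[L_0\cap L_1,\mathfrak p_0\cap\mathfrak p_1]=\{0\}$ and $L_0\cap L_1$ is commutative. For $(2)\Rightarrow(1)$ together with the antipodal statement, a commutative subset of $M$ spans an abelian subalgebra, which extends to a maximal abelian subalgebra $\mathfrak t'$ of $\mathfrak g$; thus $L_0\cap L_1\subset M\cap\mathfrak t'$, and by Theorem~\ref{thm:complexflag} the right-hand side is a finite orbit of the Weyl group, so $L_0\cap L_1$ is finite and in particular discrete, while being commutative it is antipodal by the same theorem. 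Finally, if $\mathfrak p_0\cap\mathfrak p_1$ is abelian then $(3)$ holds trivially, whence $L_0\cap L_1$ is discrete.

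It remains to show that $L_0\cap L_1=M\cap\mathfrak p_0\cap\mathfrak p_1$ is non-empty, which I expect to be the main obstacle. A purely variational attempt---minimizing $\|p-q\|^2$ over $(p,q)\in L_0\times L_1$---only produces a critical configuration with $[p,q]\in\mathfrak p_0\cap\mathfrak p_1$ and does not by itself force $p=q$, so first-order information is insufficient. The route I would instead pursue is homological: each $L_i$ is a totally geodesic Lagrangian submanifold of half the real dimension of $M$, and real forms of complex flag manifolds are maximal in the Smith--Thom sense, so their mod-$2$ fundamental classes pair nontrivially, $[L_0]\cdot[L_1]\neq 0$ in $H_*(M;\mathbb Z_2)$. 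This forces $L_0\cap L_1\neq\emptyset$, and it is here that the real work lies.
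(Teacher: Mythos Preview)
Your cycle $(1)\Rightarrow(3)\Rightarrow(2)\Rightarrow(1)$ is correct and matches the paper's structure. Your argument for $(1)\Rightarrow(3)$ via the curve $c(t)=\mathrm{Ad}(\exp t[Z,x])\,x$ is a nice variant: the paper instead observes that discreteness together with total geodesicity of the $L_i$ forces $T_xL_0\cap T_xL_1=[\mathfrak g,x]\cap\mathfrak p_0\cap\mathfrak p_1=\{0\}$, so for every $X\in\mathfrak k_0\cap\mathfrak k_1$ one has $[X,x]\in[\mathfrak g,x]\cap\mathfrak p_0\cap\mathfrak p_1=\{0\}$, and then $\langle X,[x,y]\rangle=\langle[X,x],y\rangle=0$ gives $[x,y]=0$. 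The two arguments are equally short; yours avoids quoting the tangent-space description of $T_xL_i$.

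The genuine gap is in non-emptiness. Your proposed route through $[L_0]\cdot[L_1]\neq 0$ in $H_*(M;\mathbb Z_2)$ fails already for $M=\mathbb{CP}^1$: the real form is a great circle, $H_1(S^2;\mathbb Z_2)=0$, so $[L_i]=0$ and the mod-$2$ pairing vanishes, yet two great circles meet in two points. More generally, a transverse intersection $L_0\cap\mathrm{Ad}(a)L_1$ has cardinality $\#W(\tilde\Sigma)x_0$, which is typically even, so the $\mathbb Z_2$-intersection number is usually zero. Smith--Thom maximality says only that $SB(L;\mathbb Z_2)=SB(M;\mathbb Z_2)$; it does not force $[L]\neq 0$ in middle homology.

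The paper's argument is elementary and group-theoretic. One first conjugates $L_1$ by $g_0$ so that $x_0\in L_0\cap L_1'$ with $L_1'=\mathrm{Ad}(g_0)L_1$, then chooses a maximal abelian $\mathfrak a\subset\mathfrak p_0\cap\mathfrak p_1'$ containing $x_0$ and applies Hermann's decomposition $G=K_0AK_1'$ with $A=\exp\mathfrak a$ and $K_1'=g_0K_1g_0^{-1}$. Writing any $g\in G$ as $k_0ak_1'$ and using $\mathfrak a\subset\mathfrak p_0\cap\mathrm{Ad}(a)\mathfrak p_1'$, one gets
\[
L_0\cap\mathrm{Ad}(g)L_1'=\mathrm{Ad}(k_0)\bigl(M\cap\mathfrak p_0\cap\mathrm{Ad}(a)\mathfrak p_1'\bigr)\supset\mathrm{Ad}(k_0)(M\cap\mathfrak a)\ni\mathrm{Ad}(k_0)x_0.
\]
Specializing to $g=g_0^{-1}$ yields $L_0\cap L_1\neq\emptyset$. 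No commutativity of $\tilde\theta_0,\tilde\theta_1$ is needed here.
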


\begin{proof}
First we show $L_0\cap L_1\not=\emptyset$. 
We define a real flag manifold $L_1'$ which is congruent to $L_1$ by 
$L_1'=\mathrm{Ad}_G(g_0)L_1$. 
Since $L_1'=\mathrm{Ad}_G(g_0K_1g_0^{-1})x_0$, the point $x_0$ is in 
$L_0\cap L_1'$. 
Define a subspace $\mathfrak{p}_1'$ by  
$\mathfrak{p}_1'=\mathrm{Ad}_G(g_0)\mathfrak{p}_1$. 
Then $x_0$ is in $\mathfrak{p}_0\cap \mathfrak{p}_1'$. 
Take a maximal abelian subspace $\mathfrak{a}$ of 
$\mathfrak{p}_0\cap \mathfrak{p}_1'$ which contains $x_0$. 
Set $A:=\exp \mathfrak{a}$ and $K_1':=g_0K_1g_0^{-1}$. 
By a theorem of Hermann (\cite[Theorem~2.2]{Hermann}), we have 
$G=K_0 A K_1'$. 
For any $g\in G$, there exist $k_0\in K_0, a\in A$ and $k_1'\in K_1'$ 
such that $g=k_0 a k_1'$. 
Since $\mathfrak{a}$ is an abelian subspace of 
${\mathfrak p}_0\cap \mathrm{Ad}_G(a){\mathfrak p}_1'$, we have 
\begin{align*}
L_0\cap \mathrm{Ad}_G(g)L_1'
&=\mathrm{Ad}_G (k_0)(L_0\cap \mathrm{Ad}_G(a)L_1')\\
&=\mathrm{Ad}_G(k_0) 
(M\cap {\mathfrak p}_0\cap \mathrm{Ad}_G(a)(M\cap {\mathfrak p}_1'))\\
&=\mathrm{Ad}_G(k_0) (M\cap {\mathfrak p}_0\cap \mathrm{Ad}_G(a){\mathfrak p}_1')\\
&\supset \mathrm{Ad}_G(k_0) (M\cap \mathfrak{a})\ni  \mathrm{Ad}_G(k_0)x_0.
\end{align*}
If we take $g$ as $g_0^{-1}$, we have $L_0\cap L_1\not=\emptyset$. 

Next we show (1) implies (3). 
Note that $L_i=M\cap\mathfrak p_i$ $(i=0,1)$.
If $L_0\cap L_1$ is discrete, then 
for any $x\in L_0\cap L_1$,
$$
\{0\}=T_xL_0\cap T_xL_1
=[\mathfrak g,x]\cap\mathfrak p_0\cap\mathfrak p_1
$$
holds since $L_0$ and $L_1$ are totally geodesic in $M$
(see \cite[p.~61]{Kobayashi-Nomizu2}). 
For any $y\in\mathfrak p_0\cap\mathfrak p_1$,
we have $[x,y]\in\mathfrak k_0\cap\mathfrak k_1$ since $x$ is in $\mathfrak p_0\cap\mathfrak p_1$.
Take an $\mathrm{Ad}(G)$-invariant inner product $\langle \ , \ \rangle$ on $\mathfrak g$.
Then for any $X\in\mathfrak k_0\cap\mathfrak k_1$,
$$
\langle X,[x,y] \rangle = \langle [X,x],y \rangle
$$
holds.
Here $X\in\mathfrak k_0\cap\mathfrak k_1$ and $x\in\mathfrak p_0\cap\mathfrak p_1$
yield that 
$[X,x]\in\mathfrak p_0\cap\mathfrak p_1$.
On the other hand, since $[X,x]\in[\mathfrak g,x]$ we obtain
$$
[X,x]\in[\mathfrak g,x]\cap\mathfrak p_0\cap\mathfrak p_1=\{0\},
$$
which yields $[X,x]=0$.
Consequently, for any $X\in\mathfrak k_0\cap\mathfrak k_1$,
we have $\langle X,[x,y] \rangle = 0$, which means that $[x,y]=0$.
Hence we get $[L_0\cap L_1, \mathfrak p_0\cap\mathfrak p_1]=\{0\}$. 

Since $L_i$ is a subset of $\mathfrak p_i$, it is clear that (3) implies (2).

Finally, we assume (2).
By Theorem~\ref{thm:complexflag},
the intersection of $L_0$ and $L_1$ is an antipodal set of $M$,
which implies (1).
\end{proof}

\begin{remark}
The antipodal property of the intersection of two real forms
were first found in the case of the complex hyperquadrics
by Tasaki \cite{Tasaki2010}.
Before this work Howard \cite{Howard} proved that the cardinality
of the intersection of two generic real forms
in the $n$-dimensional complex projective space is
equal to $n+1$.
From a viewpoint of antipodal sets we can see the antipodal
property of the intersection by Howard's proof.
Tanaka and Tasaki~\cite{Tanaka-Tasaki2012, Tanaka-Tasaki, TTc}
proved the antipodal property of the intersection of two real forms
in Hermitian symmetric spaces of compact type
by investigating the intersections of maximal tori.
After that
\cite{Ikawa-Tanaka-Tasaki} gave
another proof by the use of symmetric triads.
In \cite{IST2014} it was
considered antipodal sets in the complex flag manifolds
and proved the antipodal property of the intersection
of two real flag manifolds in some complex flag manifolds.
The above proof of Lemma~\ref{lem:commutative}
is the simplest of the proofs mentioned in this remark.
Moreover it does not require the assumption that
two involutions determining two real flag manifolds are commutative.
\end{remark}

\subsection{Congruent case}
\label{sec:congruent intersection}

For $g \in G$ we consider the intersection $L \cap \mathrm{Ad}(g)L$
of the real flag manifold $L$ and congruent one $\mathrm{Ad}(g)L$
in a complex flag manifold $M$.
Take a maximal abelian subspace $\mathfrak{a}$ of $\mathfrak{p}$
and a maximal abelian subalgebra $\mathfrak{t}$ of $\mathfrak{g}$
satisfying $x_0 \in \mathfrak{a} \subset \mathfrak{t}$.
Set $A:=\exp\mathfrak{a}$.
Then $A$ is a toral subgroup of $G$, and $G = KAK$ holds
(\cite[Chapter~V, Theorem~6.7]{Helgason}).
Hence we can express $g = k_0ak_1$ for some $k_0, k_1 \in K$ and $a \in A$.
Since $L$ is invariant under the action of $\mathrm{Ad}(K)$,
we have
$$
L \cap \mathrm{Ad}(g)L
= L \cap \mathrm{Ad}(k_0ak_1)L
= \mathrm{Ad}(k_0)(L \cap \mathrm{Ad}(a)L).
$$
Thus it is enough to investigate $L \cap \mathrm{Ad}(a)L$.

We give an $\mathrm{Ad}(G)$-invariant inner product $\langle\cdot,\cdot\rangle$ on $\mathfrak{g}$.
For $\lambda \in \mathfrak{a}$ we define a subspace
$\mathfrak{g}_{\lambda}$ of $\mathfrak{g}^{\mathbb{C}}$ by
$$ 
\mathfrak{g}_{\lambda} := \{ X \in \mathfrak{g}^{\mathbb{C}} \mid
[H, X] = \sqrt{-1} \langle \lambda, H \rangle X \; (\forall H \in \mathfrak{a}) \},
$$
and the restricted root system
$R := \{ \lambda \in \mathfrak{a} \setminus \{0\} \mid \mathfrak{g}_{\lambda} \neq \{0\} \}$.
The orthogonal projection from $\mathfrak{t}$ to $\mathfrak{a}$
is denoted by $H \mapsto \bar{H}$.
Then
$$
R = \{ \bar{\alpha} \mid \alpha \in \Delta,\, \bar{\alpha} \neq 0 \}
$$
holds.
We define lexicographic orderings of $\mathfrak{a}$ and $\mathfrak{t}$
which are compatible with the inclusion $\mathfrak{a} \subset \mathfrak{t}$.
Let $R^+$ and $\Delta^+$ denote the sets of positive roots in $R$ and $\Delta$, respectively.
For $\lambda \in R^+$ we define
$$
\mathfrak{k}_{\lambda}
= (\mathfrak{g}_{\lambda} \oplus \mathfrak{g}_{-\lambda}) \cap \mathfrak{k}, \quad
\mathfrak{p}_{\lambda}
= (\mathfrak{g}_{\lambda} \oplus \mathfrak{g}_{-\lambda}) \cap \mathfrak{p},
$$
and
$$
\mathfrak{k}(0)
= \{ X \in \mathfrak{k} \mid [H, X] = 0\, (\forall H \in \mathfrak{a}) \}.
$$
Under this setting, we have the following lemma.

\begin{lemma}[Lemma~1.1 in \cite{Takeuchi}]\label{lem:4-1}
We have orthogonal direct sum decompositions
$$
\mathfrak{k} = \mathfrak{k}(0) \oplus \sum_{\lambda \in R^+} \mathfrak{k}_{\lambda},
\qquad
\mathfrak{p} = \mathfrak{a} \oplus \sum_{\lambda \in R^+} \mathfrak{p}_{\lambda}.
$$
For $\alpha \in \Delta^+$ with $\bar{\alpha} \neq 0$
there exist $S_{\alpha} \in \mathfrak{k}$ and $T_{\alpha} \in \mathfrak{p}$
satisfying the following conditions.
\begin{enumerate}
\item
For each $\lambda \in R^+$,
$\{ S_{\alpha} \mid \alpha \in \Delta^+, \bar{\alpha} = \lambda \}$ and
$\{ T_{\alpha} \mid \alpha \in \Delta^+, \bar{\alpha} = \lambda \}$
are orthonormal bases of $\mathfrak{k}_{\lambda}$ and $\mathfrak{p}_{\lambda}$
respectively.
\item
For each $\lambda \in R^+$, $\alpha \in \Delta^+$ with $\bar{\alpha} = \lambda \in R^+$
and $H \in \mathfrak{a}$,
the following equations hold.
\begin{align*}
&[H, S_{\alpha}] = \langle \lambda, H \rangle T_{\alpha}, \quad
[H, T_{\alpha}] = -\langle \lambda, H \rangle S_{\alpha}, \quad
[S_{\alpha}, T_{\alpha}] = \lambda, \\
& \mathrm{Ad}(\exp H)S_{\alpha} = \cos\langle\lambda,H\rangle S_{\alpha}+\sin\langle\lambda,H\rangle T_{\alpha},\\
& \mathrm{Ad}(\exp H)T_{\alpha}=-\sin\langle\lambda,H\rangle S_{\alpha}+\cos\langle\lambda,H\rangle T_{\alpha}.
\end{align*}
\end{enumerate}
\end{lemma}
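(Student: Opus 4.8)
The plan is to regard the family $\{\mathrm{ad}(H)\mid H\in\mathfrak a\}$ as a commuting family of operators on $\mathfrak g$ that are skew-symmetric for the $\mathrm{Ad}(G)$-invariant inner product, and to read off both decompositions and the adapted bases from their simultaneous complex eigenspaces. First I would complexify: since $\mathfrak a$ is abelian the operators $\mathrm{ad}(H)$ commute and are simultaneously diagonalizable on $\mathfrak g^{\mathbb C}$, with $\mathfrak g_\lambda$ the joint eigenspace on which $\mathrm{ad}(H)$ acts by $\sqrt{-1}\langle\lambda,H\rangle$; the nonzero $\lambda$ that occur are exactly $R$, and $\mathfrak g^{\mathbb C}=\mathfrak g_0\oplus\sum_{\lambda\in R}\mathfrak g_\lambda$ with $\mathfrak g_0$ the centralizer of $\mathfrak a$. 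Because $H\in\mathfrak a\subset\mathfrak p$ satisfies $\theta(H)=-H$, the complex-linear extension of $\theta$ anti-commutes with $\mathrm{ad}(H)$, so $\theta(\mathfrak g_\lambda)=\mathfrak g_{-\lambda}$; the conjugation $\sigma$ of $\mathfrak g^{\mathbb C}$ over $\mathfrak g$ likewise sends $\mathfrak g_\lambda$ to $\mathfrak g_{-\lambda}$. Taking $\sigma$-fixed points of $\mathfrak g_\lambda\oplus\mathfrak g_{-\lambda}$ and splitting under $\theta$ then produces the real subspaces $\mathfrak k_\lambda$ and $\mathfrak p_\lambda$, while distinct joint eigenspaces are orthogonal by skew-symmetry; this yields the asserted orthogonal decompositions of $\mathfrak k$ and $\mathfrak p$ once the $\lambda=0$ term is identified.

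For the zero part I would invoke maximality of $\mathfrak a$: the centralizer $\mathfrak z_{\mathfrak g}(\mathfrak a)$ is $\theta$-stable, its $\mathfrak k$-part is $\mathfrak k(0)$ by definition, and its $\mathfrak p$-part is an abelian subspace of $\mathfrak p$ containing $\mathfrak a$, hence equal to $\mathfrak a$; thus $\mathfrak g_0\cap\mathfrak g=\mathfrak k(0)\oplus\mathfrak a$. To justify indexing the bases by roots I would record the count $\dim_{\mathbb R}\mathfrak k_\lambda=\dim_{\mathbb R}\mathfrak p_\lambda=\#\{\alpha\in\Delta^+\mid\bar\alpha=\lambda\}$: with the orderings of $\mathfrak t$ and $\mathfrak a$ chosen compatibly, every root restricting to $\lambda\in R^+$ is positive, so $\dim_{\mathbb C}\mathfrak g_\lambda$ equals this cardinality, and the two $\theta$-eigenspaces have equal dimension because $\mathrm{ad}(H_0)$ interchanges them isomorphically for a regular $H_0\in\mathfrak a$.

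To build the bases I would fix a regular $H_0\in\mathfrak a$ and, for each $\lambda\in R^+$, take any orthonormal basis $\{S_\alpha\}$ of $\mathfrak k_\lambda$ labelled by $\{\alpha\in\Delta^+\mid\bar\alpha=\lambda\}$ (a bijection of index sets of equal cardinality). Writing $S_\alpha=W_++W_-$ with $W_\pm\in\mathfrak g_{\pm\lambda}$ and $W_-=\sigma(W_+)$, I would set $T_\alpha:=\sqrt{-1}(W_+-W_-)$. The eigenvalue relations give $[H,S_\alpha]=\langle\lambda,H\rangle T_\alpha$ and $[H,T_\alpha]=-\langle\lambda,H\rangle S_\alpha$ for every $H\in\mathfrak a$ simultaneously, directly from $\mathrm{ad}(H)W_\pm=\pm\sqrt{-1}\langle\lambda,H\rangle W_\pm$; since $S_\alpha\in\mathfrak k$ forces $\theta(W_+)=W_-$, one checks $\theta(T_\alpha)=-T_\alpha$, so $T_\alpha\in\mathfrak p$. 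Equality $\|T_\alpha\|=\|S_\alpha\|$ and orthonormality of $\{T_\alpha\}$ follow because $\tfrac{1}{\langle\lambda,H_0\rangle}\mathrm{ad}(H_0)$ restricts to an isometry $\mathfrak k_\lambda\to\mathfrak p_\lambda$ (as $\mathrm{ad}(H_0)^2=-\langle\lambda,H_0\rangle^2$ there and $\mathrm{ad}(H_0)$ is skew). Finally $[S_\alpha,T_\alpha]$ lies in $\mathfrak p$ and centralizes $\mathfrak a$, hence lies in $\mathfrak a$, and pairing with $H\in\mathfrak a$ via invariance gives $\langle[S_\alpha,T_\alpha],H\rangle=\langle\lambda,H\rangle$, so $[S_\alpha,T_\alpha]=\lambda$; exponentiating the rotation matrix of $\mathrm{ad}(H)$ on $\mathrm{span}\{S_\alpha,T_\alpha\}$ produces the $\mathrm{Ad}(\exp H)$ formulas.

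The main obstacle I anticipate is organizational rather than deep: $\theta$ permutes the several roots $\alpha$ sharing a fixed restriction $\lambda$, so an individual root vector need not be a $\theta$-eigenvector and cannot be split naively into $\mathfrak k$- and $\mathfrak p$-parts. Building $S_\alpha,T_\alpha$ from the $\sigma$- and $\theta$-structure of the whole restricted eigenspace $\mathfrak g_\lambda$, rather than from a single root vector, is precisely what makes the two bracket identities hold uniformly in $H$ and keeps the pairing $S_\alpha\leftrightarrow T_\alpha$ consistent; verifying this uniformity is the only point requiring genuine care.
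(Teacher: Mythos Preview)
Your argument is correct. Note, however, that the paper does not supply its own proof of this lemma: it is quoted verbatim as Lemma~1.1 of \cite{Takeuchi} and used as a black box, so there is no in-paper proof to compare against. Your route---simultaneous diagonalisation of the commuting skew operators $\mathrm{ad}(H)$, $H\in\mathfrak a$, tracking the $\theta$- and $\sigma$-actions on $\mathfrak g_\lambda\oplus\mathfrak g_{-\lambda}$, and constructing $T_\alpha$ from $S_\alpha$ by the rescaled $\mathrm{ad}(H_0)$---is the standard one and matches what one finds in Takeuchi. The only cosmetic difference is that the classical construction usually starts from normalised root vectors $E_\alpha\in\tilde{\mathfrak g}_\alpha$ and sets $S_\alpha,T_\alpha$ to be explicit real linear combinations of $E_\alpha$ and $E_{-\alpha}$ (or of $E_\alpha$ and $\theta E_\alpha$), so that the label $\alpha$ genuinely records a $\mathfrak t$-root; you instead pick an arbitrary orthonormal basis of $\mathfrak k_\lambda$ and use the $\alpha$'s as mere index names. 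Since the lemma only asserts existence, this is harmless, and your closing remark about $\theta$ permuting the roots over a given $\lambda$ correctly identifies why the naive ``root-by-root'' splitting can fail and why working at the level of the whole restricted eigenspace is the clean fix.
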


Lemma~\ref{lem:4-1} implies
\begin{equation}\label{eq:4-1}
\mathfrak{p} \cap \mathrm{Ad}(a)\mathfrak{p}
= \mathfrak{a} \oplus \sum_{\lambda \in R^+ \atop \langle \lambda, H \rangle \in \pi \mathbb{Z}}
\mathfrak{p}_\lambda
\end{equation}
for any $a = \exp H \ (H \in \mathfrak a)$.
A point $H \in \mathfrak{a}$ is called a {\it regular point} of $R$
if $\langle\lambda,H\rangle\notin\pi\mathbb Z$ for any $\lambda \in R$.

\begin{theorem}\label{thm:congruent}
Under the situation mentioned above, for $a = \exp H \ (H \in \mathfrak{a})$,
the intersection $L\cap\mathrm{Ad}(a)L$ of $L$ and $\mathrm{Ad}(a)L$
is discrete if and only if $H$ is a regular point of $R$.
In this case
$$
L\cap\mathrm{Ad}(a)L
= M \cap \mathfrak a
= W(R)x_0
= W(\Delta)x_0 \cap \mathfrak{a},
$$
where $W(R)$ is the Weyl group of $R$ in $\mathfrak{a}$
and $W(\Delta)$ is the Weyl group of $\Delta$ in $\mathfrak{t}$.
\end{theorem}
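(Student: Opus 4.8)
The plan is to transfer the whole question to the single subspace $\mathfrak q:=\mathfrak p\cap\mathrm{Ad}(a)\mathfrak p$ and to read off both statements from Lemma~\ref{lem:4-1}. Since $M$ is $\mathrm{Ad}(G)$-invariant and $L=M\cap\mathfrak p$, one has $\mathrm{Ad}(a)L=M\cap\mathrm{Ad}(a)\mathfrak p$, so
$$
L\cap\mathrm{Ad}(a)L=M\cap\mathfrak p\cap\mathrm{Ad}(a)\mathfrak p=M\cap\mathfrak q .
$$
By (\ref{eq:4-1}) we have $\mathfrak q=\mathfrak a\oplus\sum_{\lambda\in R^+,\,\langle\lambda,H\rangle\in\pi\mathbb Z}\mathfrak p_\lambda$, so $H$ is regular precisely when $\mathfrak q=\mathfrak a$. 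This reformulation is the backbone of the argument.

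First I would treat the regular case. Here $\mathfrak q=\mathfrak a$ is abelian, and viewing $L$ and $\mathrm{Ad}(a)L$ as the real flag manifolds attached to $\mathfrak p_0=\mathfrak p$ and $\mathfrak p_1=\mathrm{Ad}(a)\mathfrak p$ with the common base point $x_0=\mathrm{Ad}(a)x_0$, the final assertion of Lemma~\ref{lem:commutative} gives that $L\cap\mathrm{Ad}(a)L=M\cap\mathfrak a$ is discrete. To identify this set I would extend $\mathfrak a$ to the maximal abelian subalgebra $\mathfrak t$: the argument of Theorem~\ref{thm:complexflag}, i.e.\ \cite[p.~285, Proposition~2.2]{Helgason}, gives $M\cap\mathfrak t=W(\Delta)x_0$, and intersecting with $\mathfrak a\subseteq\mathfrak t$ yields $M\cap\mathfrak a=W(\Delta)x_0\cap\mathfrak a$. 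Independently, elements of $\mathfrak p$ that are $\mathrm{Ad}(G)$-conjugate are already $\mathrm{Ad}(K)$-conjugate, and $\mathrm{Ad}(K)x_0\cap\mathfrak a=W(R)x_0$ by the orbit theory of compact symmetric pairs (\cite{Takeuchi}, \cite{Helgason}); hence $M\cap\mathfrak a=\mathrm{Ad}(K)x_0\cap\mathfrak a=W(R)x_0$, completing the chain of equalities.

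For the converse I would argue by contraposition, producing an explicit non-constant curve in $M\cap\mathfrak q$. Assume $H$ is not regular and choose $\lambda\in R^+$ with $\langle\lambda,H\rangle\in\pi\mathbb Z$, so $\{0\}\neq\mathfrak p_\lambda\subseteq\mathfrak q$. The crucial observation is that some $y\in W(R)x_0\subseteq M\cap\mathfrak a$ has $\langle\lambda,y\rangle\neq0$; otherwise $x_0$ would be orthogonal to the whole $W(R)$-orbit of $\lambda$, hence to the irreducible component $\mathfrak a^{(j)}=\mathrm{span}(W(R)\lambda)$ of $R$ containing $\lambda$, but the $\mathfrak a^{(j)}$-component of $x_0$ equals its component in the corresponding ideal of $\mathfrak g$, which is non-zero by the standing assumption that $x_0$ has non-zero component in every simple ideal. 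Taking $\alpha\in\Delta^+$ with $\bar\alpha=\lambda$ and the associated $S_\alpha\in\mathfrak k_\lambda$, $T_\alpha\in\mathfrak p_\lambda$ of Lemma~\ref{lem:4-1}, the bracket relations of Lemma~\ref{lem:4-1}(2) show that $\mathrm{ad}(S_\alpha)$ preserves $\mathrm{span}\{y,T_\alpha,\lambda\}\subseteq\mathfrak a\oplus\mathfrak p_\lambda\subseteq\mathfrak q$. Thus $c(t):=\mathrm{Ad}(\exp tS_\alpha)y$ lies in $M\cap\mathfrak q=L\cap\mathrm{Ad}(a)L$, and $c'(0)=[S_\alpha,y]=-\langle\lambda,y\rangle T_\alpha\neq0$ makes it non-constant, so the intersection is not discrete.

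The main obstacle is this last construction: one must guarantee that a resonant restricted root is not orthogonal to the entire candidate discrete intersection $W(R)x_0$. This is exactly where the global hypothesis on $x_0$ enters, and it requires unwinding how the irreducible components of $R$ are distributed among the simple ideals of $\mathfrak g$. Everything else follows directly from (\ref{eq:4-1}), Lemma~\ref{lem:commutative}, and the standard descriptions of $\mathrm{Ad}(G)$- and $\mathrm{Ad}(K)$-orbits.
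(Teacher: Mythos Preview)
Your proposal is correct and follows essentially the same route as the paper: reduce to $L\cap\mathrm{Ad}(a)L=M\cap(\mathfrak p\cap\mathrm{Ad}(a)\mathfrak p)$ via (\ref{eq:4-1}), identify $M\cap\mathfrak a$ with $W(R)x_0=W(\Delta)x_0\cap\mathfrak a$ by \cite[Chapter~VII, Proposition~2.2]{Helgason}, and for the converse produce the curve $\mathrm{Ad}(\exp tS_\alpha)y$ using a point $y\in W(R)x_0$ with $\langle\lambda,y\rangle\neq0$, located via irreducibility of the relevant component of $R$. The only cosmetic difference is that in the singular case the paper writes out the explicit formula (\ref{eq:4-2}) and checks membership in $L$ and $\mathrm{Ad}(a)L$ separately (using $\mathrm{Ad}(a)T_\alpha=\pm T_\alpha$), whereas you observe directly that $\mathrm{ad}(S_\alpha)$ preserves $\mathrm{span}\{y,T_\alpha,\lambda\}\subseteq\mathfrak q$; the two arguments are equivalent.
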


\begin{proof}
The orthogonal symmetric Lie algebra $(\mathfrak{g}, \mathfrak{k})$ is decomposed
to the direct sum
$$
(\mathfrak{g}, \mathfrak{k})
= (\mathfrak{g}_1, \mathfrak{k}_1) \oplus \cdots \oplus (\mathfrak{g}_l, \mathfrak{k}_l)
$$
of irreducible orthogonal symmetric Lie algebras $(\mathfrak{g}_i, \mathfrak{k}_i) \ (i=1,\ldots, l)$
of compact type.
Note that each $(\mathfrak{g}_i, \mathfrak{k}_i)$ is of type I,
i.e. $\mathfrak{g}_i$ is a simple Lie algebra, or of type II,
i.e. $\mathfrak{g}_i = \tilde{\mathfrak{g}}_i \oplus \tilde{\mathfrak{g}}_i$ is the direct sum
of a simple Lie algebra $\tilde{\mathfrak{g}}_i$ and 
the involution on $\mathfrak{g}_i$ is given by $(X,Y) \mapsto (Y,X)$,
so that $\mathfrak{k}_i = \{ (X,X) \mid X \in \tilde{\mathfrak{g}}_i\}$
and $\mathfrak{p}_i = \{ (X,-X) \mid X \in \tilde{\mathfrak{g}}_i\}$.
Then the subspace $\mathfrak{p}$ of $\mathfrak{g}$ is decomposed to
$
\mathfrak{p} = \sum_{i=1}^l \mathfrak{p}_i,
$
where $\mathfrak{p}_i = \{ X \in \mathfrak{g}_i \mid \theta(X) = -X \}$.
The maximal abelian subspace $\mathfrak{a}$ of $\mathfrak{p}$ is also decomposed to
$
\mathfrak{a} = \sum_{i=1}^l \mathfrak{a}_i,
$
where $\mathfrak{a}_i$ is a maximal abelian subspace of $\mathfrak{p}_i$.
Moreover, the restricted root system $R$ of $(\mathfrak{g}, \mathfrak{k})$ is decomposed to
$
R = \bigcup_{i=1}^l R_i,
$
where $R_i$ is the restricted root system of $(\mathfrak{g}_i, \mathfrak{k}_i)$,
which is an irreducible root system of $\mathfrak{a}_i$.
Recall that, as mentioned in Section~\ref{sec:complex flag manifolds},
we assume that the base point $x_0$ of $M$ has a non-zero component
in each simple ideal of $\mathfrak{g}$.
Hence $x_0 \in \mathfrak{a}$ has a non-zero component in each $\mathfrak{a}_i \subset \mathfrak{p}_i \ (i=1,\ldots, l)$.

We suppose that $\langle\lambda,H\rangle\notin\pi\mathbb Z$
for any $\lambda \in R$.
The equation (\ref{eq:4-1}) implies
$$
\mathfrak p \cap\mathrm{Ad}(a)\mathfrak p
= \mathfrak a.
$$
Since $L = M\cap\mathfrak p$ we obtain
$$
L\cap\mathrm{Ad}(a)L
= (M\cap\mathfrak p)\cap\mathrm{Ad}(a)(M\cap\mathfrak p)
= M\cap(\mathfrak p\cap\mathrm{Ad}(a)\mathfrak p)
= M\cap\mathfrak a.
$$
Here $M\cap\mathfrak a$ is a subset of a maximal antipodal set $M\cap\mathfrak t$ in $M$
(Theorem~\ref{thm:complexflag}),
thus $L\cap\mathrm{Ad}(a)L$ is also antipodal in $M$,
in particular $L\cap\mathrm{Ad}(a)L$ is discrete.
Since $L=M\cap\mathfrak p$, by \cite[Chapter~VII, Proposition~2.2]{Helgason}, we have
\begin{equation}
\label{eqn:intersection of real forms}
M\cap\mathfrak a = L\cap\mathfrak a 
= \mathrm{Ad}(K)x_0\cap \mathfrak a
= W(R)x_0,
\end{equation}
and we also have
\begin{equation}
\label{eqn:intersection of real forms(2)}
M\cap\mathfrak a = (M \cap \mathfrak{t})\cap\mathfrak{a} 
= (\mathrm{Ad}(G)x_0 \cap \mathfrak{t}) \cap \mathfrak{a}
= W(\Delta)x_0 \cap \mathfrak{a}.
\end{equation}

To prove the converse, we next suppose that there is $\lambda \in R_i^+ \subset R^+$
satisfying $\langle\lambda, H\rangle \in \pi\mathbb Z$.
Since $W(R_i)\lambda$ spans $\mathfrak{a}_i$,
there exists $w \in W(R_i)$
such that $\langle w \lambda,x_0 \rangle\not= 0$. 
We set $X= w^{-1} x_0\in W(R)x_0$,
then $\langle \lambda,X\rangle \not= 0$.
For $\alpha \in \Delta$ with $\bar\alpha = \lambda$ and
$t \in \mathbb{R}$
we obtain by Lemma~\ref{lem:4-1}
\begin{equation}\label{eq:4-2}
\mathrm{Ad}(\exp tS_\alpha)X
= X + \frac{\langle\lambda, X\rangle}{|\lambda|^2}
(\cos(t|\lambda|) - 1)\lambda
- \frac{\langle\lambda, X\rangle}{|\lambda|}
\sin(t|\lambda|)T_\alpha.
\end{equation}
Since $S_\alpha$ is in $\mathfrak k$, we have
$$
\mathrm{Ad}(\exp tS_\alpha)X
\in \mathrm{Ad}(K)W(R)x_0
= \mathrm{Ad}(K)x_0
= L.
$$
Since $\langle\lambda, H\rangle \in \pi\mathbb Z$,
we have $\mathrm{Ad}(a)T_\alpha = \pm T_\alpha$.
Note that $X \in W(R)x_0 \subset \mathfrak a$ implies $\mathrm{Ad}(a)X = X$,
and $\lambda \in \mathfrak a$ implies $\mathrm{Ad}(a)\lambda = \lambda$.
From (\ref{eq:4-2}) we obtain
$$
\mathrm{Ad}(\exp tS_\alpha)X =
\begin{cases}
\mathrm{Ad}(a)\mathrm{Ad}(\exp(tS_\alpha))X \in \mathrm{Ad}(a)L, &
\text{if } \mathrm{Ad}(a)T_\alpha = T_\alpha, \\
\mathrm{Ad}(a)\mathrm{Ad}(\exp(-tS_\alpha))X \in \mathrm{Ad}(a)L, &
\text{if } \mathrm{Ad}(a)T_\alpha = -T_\alpha.
\end{cases}
$$
Consequently we have
$$
\mathrm{Ad}(\exp tS_\alpha)X
 \in L\cap\mathrm{Ad}(a)L
\quad (t \in \mathbb R)
$$
and $L\cap\mathrm{Ad}(a)L$ is not discrete.
\end{proof}

Finally we prove Corollary~\ref{cor:globally tight}.
\begin{proof}[Proof of Corollary~\ref{cor:globally tight}]
Let $M=\mathrm{Ad}(G)x_0$ be a complex flag manifold and $L$ a real flag manifold of $M$.
Then $L$ is a (totally geodesic) Lagrangian submanifold of $M$.
To examine the intersection $L \cap \mathrm{Ad}(g)L$, $g \in G$,
it suffices to consider the case that $g \in \exp \mathfrak{a}$.
For any $H \in \mathfrak{a}$ such that $L$ intersects $\mathrm{Ad}(\exp H)L$ transversally,
the intersection is discrete.
Hence, by Theorem \ref{thm:congruent},
$H$ is a regular point of $R$ and we have
$$
L \cap \mathrm{Ad}(\exp H)L
= M \cap \mathfrak{a}
= W(R)x_0.
$$
Its cardinality is $SB(L;\mathbb Z_2)$ (see \cite[p.~86]{Berndt-Console-Fino2001}),
and hence $L$ is globally tight.
\end{proof}

\subsection{Symmetric triads}
\label{sec:symmetric triads}

In the previous subsection we used one symmetric pair in order to 
treat congruent real flag manifolds. 
In order to treat non-congruent real flag manifolds we must use two symmetric pairs
which determine a symmetric triad.
In this subsection we review some results on symmetric triads
introduced in \cite{Ikawa2011} and state some further results.
These results will be used in Subsections~\ref{sec:non-congruent intersection} 
and \ref{sec:key-lemma}.

Let $G$ be a compact connected simple Lie group and
$(G, K_0, K_1)$ be a compact symmetric triad, which means that
there exist two involutions $\tilde\theta_0$ and $\tilde\theta_1$ on $G$ 
such that $K_i = F(\tilde\theta_i, G)_0 \ (i=0,1)$.
We denote by $\mathfrak{g}$, $\mathfrak{k}_0$ and $\mathfrak{k}_1$
the Lie algebras of $G$, $K_0$ and $K_1$ respectively.
We assume that $\tilde\theta_0 \tilde\theta_1 = \tilde\theta_1 \tilde\theta_0$
and that $\tilde\theta_0$ cannot be transformed to $\tilde\theta_1$
by an inner automorphism of $G$.
In Subsection~\ref{sec:non-congruent intersection},
we will construst a triad
$(\tilde{\Sigma},\Sigma,W)$ from $(G, K_0, K_1)$.
Note that the triad $(\tilde{\Sigma},\Sigma,W)$
satisfies the axiom of a symmetric triad mentioned below. 

\begin{definition}[\cite{Ikawa2011}]
Let $\mathfrak{a}$ be a finite dimensional vector space over $\mathbb{R}$
with an inner product $\langle\; ,\; \rangle$.
A triad $(\tilde\Sigma, \Sigma, W)$ is called a {\it symmetric triad} of $\mathfrak{a}$,
if it satisfies the following six conditions:
\begin{enumerate}
\item
$\tilde\Sigma$ is an irreducible root system of $\mathfrak{a}$,
and $\tilde\Sigma$ spans $\mathfrak{a}$.
\item
$\Sigma$ is a root system of $\mathrm{span}(\Sigma)$.
\item
$W$ is a nonempty subset of $\mathfrak{a}$, which is invariant
under the multiplication by $-1$,
and $\tilde\Sigma = \Sigma \cup W$.
\item
$\Sigma \cap W$ is a nonempty subset.
If we put $l = \max\{|\alpha| \mid \alpha \in \Sigma \cap W\}$,
then
$\Sigma \cap W = \{\alpha \in \tilde\Sigma \mid |\alpha| \leq l\}$.
\item
For $\alpha \in W$, $\lambda \in \Sigma \setminus W$, 
the integer $\displaystyle2\frac{\langle\alpha, \lambda\rangle}{|\alpha|^2}$
is odd if and only if $s_\alpha\lambda \in W \setminus \Sigma$,
where we set
$s_\alpha\lambda = \lambda - \displaystyle
2\frac{\langle\alpha, \lambda\rangle}{|\alpha|^2}\alpha$.
\item
For $\alpha \in W$, $\lambda \in W \setminus \Sigma$, 
the integer $\displaystyle2\frac{\langle\alpha, \lambda\rangle}{|\alpha|^2}$
is odd if and only if $s_\alpha\lambda \in \Sigma \setminus W$.
\end{enumerate}
\end{definition}

If $(\tilde\Sigma, \Sigma, W)$ is a symmetric triad of $\mathfrak{a}$,
then $\Sigma$ spans $\mathfrak{a}$ (see Subsection 4.2 in \cite{Ikawa-Tanaka-Tasaki}).

We define a nonempty subset $\mathfrak{a}_r$ in $\mathfrak{a}$ by
\begin{equation} \label{eq:regular set}
\mathfrak{a}_r = \bigcap_{\lambda\in\Sigma\atop\alpha\in W}
\left\{
H \in \mathfrak a \left|
\langle\lambda, H\rangle \notin \pi\mathbb Z,\,
\langle\alpha, H\rangle \notin \frac{\pi}2 + \pi\mathbb Z
\right.
\right\}.
\end{equation}
Then $\mathfrak a_r$ is an open dense subset of $\mathfrak a$.
A point in $\mathfrak a_r$ is called a {\it regular point}, 
and a point in ${\mathfrak a} \setminus {\mathfrak a}_r$
is called a {\it singular point} of $(\tilde{\Sigma},\Sigma, W)$.
A connected component of ${\mathfrak a}_r$
is called a {\it cell}. 
The {\it affine Weyl group} $\tilde{W}(\tilde{\Sigma},\Sigma,W)$
of $(\tilde{\Sigma},\Sigma,W)$ is defined as the subgroup 
of the semidirect product $O(\mathfrak a)\ltimes \mathfrak a$ whose generator set is given by 
$$
\left\{ \left(s_\lambda,\frac{2n\pi}{|\lambda|^2}\lambda \right) \; \Big| \; \lambda\in\Sigma,n\in\mathbb{Z}\right\}
\cup 
\left\{ \left(s_\alpha,\frac{(2n+1)\pi}{|\alpha|^2}\alpha\right) \; \Big| \; \alpha\in W,n\in\mathbb{Z}\right\}.
$$ 
It is known that $\tilde{W}(\tilde{\Sigma},\Sigma,W)$ acts transitively on the set of cells 
(Proposition 2.10 in \cite{Ikawa2011}).
Take a fundamental system $\tilde\Pi$ of $\tilde\Sigma$.
Denote by $\tilde\Sigma^+$ the set of positive roots
in $\tilde\Sigma$ with respect to $\tilde\Pi$.
If we put $\Sigma^+ = \Sigma \cap \tilde\Sigma^+$
and $W^+ = W \cap \tilde\Sigma^+$,
then $\Sigma = \Sigma^+ \cup (-\Sigma^+)$
and $W = W^+ \cup (-W^+)$ since $\Sigma$ and $W$ are invariant by the multiplication by $-1$, respectively.
Denote by $\Pi=\{\alpha_1,\ldots,\alpha_r\}$ the set of simple roots of $\Sigma$. 
By Lemma 2.12 and Theorem 2.19 in \cite{Ikawa2011} there exists a unique element 
$\tilde{\alpha}\in W^+$ such that the subset $P_0$ of $\mathfrak a$ defined by 
$$
P_0 = \left\{ H \in \mathfrak{a} \ \Big| \ 0<\langle \lambda,H\rangle \ (\forall\lambda\in \Pi),
\langle\tilde{\alpha},H\rangle<\frac{\pi}{2}\right\}
$$
is a cell. 
There exist positive integers $m_1,\ldots,m_r$ such that $\tilde{\alpha}=\sum_{i=1}^r m_i\alpha_i$ 
as mentioned on the first and second lines of page 95 in \cite{Ikawa2011}.
We can define $H_i\in\mathfrak a$ by 
$$
\langle H_i,\tilde{\alpha}\rangle=\frac{\pi}{2},\quad \langle H_i,\alpha_j\rangle =0\quad (j\not=i).
$$
Then we have $\langle H_i,\alpha_i\rangle=\frac{\pi}{2m_i}$. 
We can express $P_0$ as
$$
P_0 = \left\{ \sum_{i=1}^r t_iH_i \ \bigg| \ 0<t_i \ (1\leq^\forall\hspace{-4pt} i\leq r), \sum_{i=1}^r t_i<1\right\}.
$$
The closure of $P_0$ is a simplex.
Define a lattice $\Gamma$ of $\mathfrak a$ by 
\begin{equation}
\label{eqn:lattice}
\Gamma = \left\{H\in\mathfrak a\ \Big| \ \langle\lambda,H\rangle\in \frac{\pi}{2}\mathbb{Z}
\  (\forall\lambda\in\tilde{\Sigma})\right\},
\end{equation}
which will be used in Subsection~\ref{sec:key-lemma}.
We know, by \cite[p.~82]{Ikawa2011}, that 
$$
\Gamma=\left\{H\in\mathfrak{a} \ \Big| \ \langle \lambda,H\rangle \in\frac{\pi}{2}\mathbb{Z} \ (\forall\lambda\in \Sigma\cap W)\right\}.
$$
Since $\Sigma\cap W \subset \Sigma \subset \tilde{\Sigma}$, we have 
\begin{equation}\label{eqn:lattice2}
\Gamma = \left\{ H \in \mathfrak{a} \ \Big| \ \langle \lambda,H\rangle \in\frac{\pi}{2}\mathbb{Z} \ (\forall\lambda\in\Sigma)\right\}.
\end{equation}
By (\ref{eqn:lattice2}) we have 
$$
\Gamma = \left\{ H \in \mathfrak{a} \ \Big| \ \langle \alpha_i,H\rangle \in\frac{\pi}{2}\mathbb{Z} \
(1\leq^\forall\hspace{-4pt} i\leq r)\right\}
=\sum_{i=1}^r \mathbb{Z}m_iH_i.
$$
\begin{lemma}\label{lem:ST}
For any positive integer $n$ with $n>\sum_{i=1}^r m_i$,
there exists an element $H_0$ in ${\mathfrak a}_r$ such that  $nH_0\in \Gamma$. 
\end{lemma}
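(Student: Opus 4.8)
The plan is to exhibit one explicit point and check it directly, exploiting the two descriptions already in hand: the open-simplex form of the cell $P_0$ and the generator form $\Gamma = \sum_{i=1}^r \mathbb{Z} m_i H_i$ of the lattice. Concretely, I would set
$$
H_0 = \frac{1}{n}\sum_{i=1}^r m_i H_i,
$$
and verify that $H_0 \in \mathfrak{a}_r$ and $nH_0 \in \Gamma$. The idea is that scaling a lattice point down by $1/n$ lands it inside the fundamental cell exactly when the denominator $n$ is large enough, and the threshold for ``large enough'' is precisely $\sum_{i=1}^r m_i$.

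First I would treat the lattice condition, which is immediate: since $nH_0 = \sum_{i=1}^r m_i H_i$, every coefficient against the generators $m_iH_i$ equals $1 \in \mathbb{Z}$, so $nH_0 \in \sum_{i=1}^r \mathbb{Z} m_iH_i = \Gamma$. (Equivalently, using (\ref{eqn:lattice2}), one checks $\langle \alpha_j, nH_0\rangle = m_j\langle \alpha_j, H_j\rangle = \frac{\pi}{2} \in \frac{\pi}{2}\mathbb{Z}$ for each simple root $\alpha_j$, and every $\lambda \in \Sigma$ is an integral combination of the $\alpha_j$.) Next I would check that $H_0$ is regular by placing it in the cell $P_0$. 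Writing $H_0 = \sum_{i=1}^r t_i H_i$ with $t_i = m_i/n$, each $t_i$ is positive because $m_i$ is a positive integer, while $\sum_{i=1}^r t_i = \frac{1}{n}\sum_{i=1}^r m_i < 1$ precisely by the hypothesis $n > \sum_{i=1}^r m_i$. Hence $H_0$ lies in the open simplex $P_0$; since $P_0$ is a cell, i.e.\ a connected component of $\mathfrak{a}_r$, this gives $H_0 \in \mathfrak{a}_r$, as required.

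I expect essentially no obstacle: the entire content of the lemma is that the strict inequality $n > \sum_{i=1}^r m_i$ is exactly what keeps the scaled lattice point $\frac{1}{n}\sum_{i=1}^r m_iH_i$ strictly inside the open simplex $P_0$ rather than on its boundary (where it would be singular). The only point warranting a word of care is the linear independence of $H_1,\dots,H_r$, which is implicit in calling $\overline{P_0}$ a simplex and follows because the matrix $(\langle H_i,\alpha_j\rangle)$ is diagonal with nonzero diagonal entries; this is used only to read off the coordinates $t_i$ unambiguously and plays no deeper role.
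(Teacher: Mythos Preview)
Your proof is correct and essentially identical to the paper's: both take $H_0=\frac{1}{n}\sum_{i=1}^r m_iH_i$, verify $H_0\in P_0\subset\mathfrak{a}_r$ via the open-simplex description using $t_i=m_i/n>0$ and $\sum t_i<1$, and note that $nH_0=\sum_i m_iH_i\in\Gamma$.
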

\begin{proof} Define positive numbers $t_i$ by $t_i=\frac{m_i}{n}\; (i=1,2,\ldots,r)$ and set 
$$
H_0=\sum_{i=1}^r t_iH_i=\sum_{i=1}^r \frac{m_i}{n}H_i\in \mathfrak a. 
$$
Since $t_i>0$ and $\sum_{i=1}^r t_i=\frac{1}{n}\sum_{i=1}^r m_i<1$, 
we see $H_0$ is in $P_0 (\subset \mathfrak{a}_r)$.
Furthermore we have 
$$
nH_0=\sum_{i=1}^r m_iH_i\in\Gamma.
$$
\end{proof}

\begin{lemma} \label{lem:triad-span}
Let $(\tilde{\Sigma},\Sigma,W)$ be a symmetric triad. 
Assume that $\Sigma$ is an irreducible root system. 
Then the following (1) and (2) hold. 
\begin{enumerate}
\item $W(\Sigma)\lambda$ spans $\mathfrak a$ for each $\lambda\in\Sigma$. 
\item $W(\Sigma)\alpha$ spans $\mathfrak a$ for each $\alpha\in W$. 
\end{enumerate}
\end{lemma}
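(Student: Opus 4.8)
The plan is to reduce both statements to a single fact about the Weyl group action on $\mathfrak{a}$. First I would recall that for any symmetric triad $(\tilde\Sigma,\Sigma,W)$ the root system $\Sigma$ spans $\mathfrak{a}$; this is stated just after the definition, following \cite{Ikawa-Tanaka-Tasaki}. Combined with the hypothesis that $\Sigma$ is irreducible, this shows that the Weyl group $W(\Sigma)$, generated by the reflections $s_\mu$ $(\mu\in\Sigma)$ acting on $\mathfrak{a}$, acts \emph{irreducibly} on $\mathfrak{a}$, in the sense that $\mathfrak{a}$ admits no proper nonzero $W(\Sigma)$-invariant subspace.

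The key observation is then the following. For any nonzero vector $v\in\mathfrak{a}$, the linear span $\mathrm{span}(W(\Sigma)v)$ is a $W(\Sigma)$-invariant subspace of $\mathfrak{a}$: each generator $s_\mu$ permutes the orbit $W(\Sigma)v$ and hence preserves its span. Moreover this span is nonzero because $v\neq 0$. By the irreducibility of the action established in the previous step, it must therefore equal $\mathfrak{a}$. Applying this with $v=\lambda\in\Sigma$ yields (1), and with $v=\alpha\in W$ yields (2); in both cases $v$ is a root of $\tilde\Sigma=\Sigma\cup W$ and is therefore nonzero, so the hypothesis $v\neq 0$ holds.

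The only nontrivial ingredient, and the place I expect the real content to sit, is the irreducibility of the $W(\Sigma)$-action on $\mathfrak{a}$. This is the standard fact that an irreducible root system spanning a vector space produces a Weyl group acting irreducibly on that space. If I wished to include its short proof rather than cite it, I would argue by contradiction: an orthogonal $W(\Sigma)$-invariant decomposition $\mathfrak{a}=V_1\oplus V_2$ into nonzero summands would force every root $\mu\in\Sigma$ into $V_1$ or into $V_2$, since $s_\mu$ preserves the decomposition and hence $\mu\in V_1\cup V_2$; roots lying in different summands are mutually orthogonal, so this would split $\Sigma$ into two nonempty mutually orthogonal subsets, contradicting the assumed irreducibility of $\Sigma$. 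Everything else is formal, so once this fact is in hand the lemma follows immediately from the invariant-span argument above.
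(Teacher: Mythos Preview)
Your proposal is correct and matches the paper's own proof essentially verbatim: the paper simply cites the fact that $W(\Sigma)$ acts irreducibly on $\mathfrak{a}$ (Humphreys, Lemma~B, p.~53) together with the fact that $\Sigma$ spans $\mathfrak{a}$, and concludes. Your write-up is just a more unpacked version of the same one-line argument.
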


\begin{proof}
The claims immediately follow from the fact that 
$W(\Sigma)$ acts irreducibly on $\mathfrak{a}$ (\cite[p.~53, Lemma~B]{Humphreys})
and $\Sigma$ spans $\mathfrak{a}$.
\end{proof}

\subsection{Non-congruent case}
\label{sec:non-congruent intersection}

In this subsection, we discuss the intersection of two real flag manifolds 
$L_0$ and $L_1$ which are not congruent in a complex flag manifold. 
Since $L_0\cap L_1\not=\emptyset$ by Lemma \ref{lem:commutative}, we can set up as follows. 

Let $G$ be a compact connected semisimple Lie group,
and $\tilde\theta_0$ and $\tilde\theta_1$ be two involutions of $G$.
Let $\mathfrak g$ denote the Lie algebra of $G$.
The differential of $\tilde\theta_i$ is denoted by $\theta_i \; (i=0,1)$.
Then we have two canonical decompositions of $\mathfrak g$: 
\begin{equation}
\label{eqn:canonical decompositions}
\mathfrak{g} = \mathfrak{k}_0 \oplus \mathfrak{p}_0 
= \mathfrak{k}_1 \oplus \mathfrak{p}_1,
\end{equation}
where $\mathfrak k_i = F(\theta_i, \mathfrak{g})$
and $\mathfrak p_i = F(-\theta_i, \mathfrak{g})$.
We can assume that $\mathfrak{p}_0\cap \mathfrak{p}_1\not=\{0\}$.
For $x_0 \in (\mathfrak{p}_0 \cap \mathfrak{p}_1) \setminus \{0\}$,
set $M = \mathrm{Ad}(G)x_0 \subset \mathfrak{g}$. 
Then $M$ is a complex flag manifold. 
For each $i=0,1$ we define a connected
closed subgroup $K_i$ of $G$ by $K_i = F(\tilde\theta_i,G)_0$. 
The Lie algebra of $K_i$ coincides with $\mathfrak{k}_i$.
We set 
\begin{equation}
\label{eqn:real form}
L_i := M \cap \mathfrak{p}_i = F(\tau_i,M) = \mathrm{Ad}(K_i) x_0 \subset M,
\end{equation}
where $\tau_i := -\theta_i|_{M}$.
Then $L_0$ and $L_1$ are real flag manifolds in $M$.

For $g\in G$ we study the intersection $L_0 \cap \mathrm{Ad}(g)L_1$ of the two real flag manifolds. 
We take a maximal abelian subspace $\mathfrak{a}$ in $\mathfrak{p}_0 \cap \mathfrak{p}_1$ containing $x_0$,
and set $A:=\exp\mathfrak{a}$, which is a toral subgroup in $G$ (cf.\ \cite[p.~107]{Ikawa2011}). 
It follows from \cite[Theorem~2.2]{Hermann} that $G=K_0 A K_1$ holds. 
Thus we can express $g = k_0ak_1$ for some $k_0 \in K_0$,\ $a \in A$ and $k_1 \in K_1$.
Since 
$$
L_0 \cap \mathrm{Ad}(g)L_1
= L_0 \cap \mathrm{Ad}(k_0ak_1)L_1
= \mathrm{Ad}(k_0)(L_0 \cap \mathrm{Ad}(a)L_1),
$$
it is enough to investigate $L_0 \cap \mathrm{Ad}(a)L_1$. 
We obtain 
\begin{equation}
\label{eqn:intersect}
L_0 \cap \mathrm{Ad}(a)L_1
= (M \cap \mathfrak{p}_0) \cap \mathrm{Ad}(a)(M \cap \mathfrak{p}_1)
= M \cap (\mathfrak{p}_0 \cap \mathrm{Ad}(a)\mathfrak{p}_1).
\end{equation}
Let $\mathfrak{a}_i$ be a maximal abelian subspace in $\mathfrak{p}_i$ containing $\mathfrak{a}$.
The maximality of $\mathfrak{a}$ in $\mathfrak{p}_0 \cap \mathfrak{p}_1$ implies
that $\mathfrak{a} = \mathfrak{a}_0 \cap \mathfrak{a}_1$. 
Denote by $R_i$ the restricted root system of $(G,K_i)$ with respect to $\mathfrak{a}_i$. 
By (\ref{eqn:intersection of real forms}), we have
\begin{equation}\label{eqn:orbit of the Weyl group}
M \cap \mathfrak{a}_i = W(R_i)x_0.
\end{equation}
In the sequel we assume that $\tilde\theta_0$ and $\tilde\theta_1$
commute with each other. 
Then we have
\begin{equation}\label{eq:simultaneous decomposition}
\mathfrak{g} = (\mathfrak{k}_0 \cap \mathfrak{k}_1)
\oplus (\mathfrak{p}_0 \cap \mathfrak{p}_1)
\oplus (\mathfrak{k}_0 \cap \mathfrak{p}_1)
\oplus (\mathfrak{p}_0 \cap \mathfrak{k}_1).
\end{equation}
We take an $\mathrm{Ad}(G)$-invariant inner product $\langle\cdot,\cdot\rangle$ on $\mathfrak{g}$. 

\begin{lemma}[Lemma~2.4 (i) of \cite{Oshima-Sekiguchi84}]
\begin{enumerate}
\item ${\mathfrak a}_0$ is $\theta_1$-invariant, and 
${\mathfrak a}_1$ is $\theta_0$-invariant. 
\item ${\mathfrak a}_0+{\mathfrak a}_1$ is abelian. 
\end{enumerate}
\end{lemma}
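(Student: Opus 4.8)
The plan is to combine the commutativity $\theta_0\theta_1=\theta_1\theta_0$ with the maximality of $\mathfrak{a}$ in $\mathfrak{p}_0\cap\mathfrak{p}_1$. As a preliminary observation, the commutativity forces $\theta_1$ to preserve $\mathfrak{p}_0$: if $\theta_0(X)=-X$ then $\theta_0(\theta_1(X))=\theta_1(\theta_0(X))=-\theta_1(X)$, so $\theta_1(\mathfrak{p}_0)=\mathfrak{p}_0$, and symmetrically $\theta_0(\mathfrak{p}_1)=\mathfrak{p}_1$. I will also use repeatedly the following maximality principle: any element of $\mathfrak{p}_0\cap\mathfrak{p}_1$ that commutes with $\mathfrak{a}$ already lies in $\mathfrak{a}$, since adjoining it to $\mathfrak{a}$ would otherwise produce a larger abelian subspace of $\mathfrak{p}_0\cap\mathfrak{p}_1$.

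For (1), I would take $X\in\mathfrak{a}_0$ and decompose it into its $\theta_1$-eigencomponents $X_\pm=\frac{1}{2}(X\pm\theta_1(X))$; because $\theta_1$ preserves $\mathfrak{p}_0$, these satisfy $X_+\in\mathfrak{p}_0\cap\mathfrak{k}_1$ and $X_-\in\mathfrak{p}_0\cap\mathfrak{p}_1$. The crucial step is to check that $X_-$ commutes with $\mathfrak{a}$. For $H\in\mathfrak{a}\subset\mathfrak{p}_1$ one has $[X,H]=0$ because $X,H$ lie in the abelian space $\mathfrak{a}_0$, while $[\theta_1(X),H]=-[\theta_1(X),\theta_1(H)]=-\theta_1([X,H])=0$, using $\theta_1(H)=-H$; hence $[X_-,H]=0$. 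By the maximality principle $X_-\in\mathfrak{a}\subset\mathfrak{a}_0$, so $X_+=X-X_-\in\mathfrak{a}_0$ as well, and therefore $\theta_1(X)=X_+-X_-\in\mathfrak{a}_0$. This proves the $\theta_1$-invariance of $\mathfrak{a}_0$; interchanging the indices $0$ and $1$ gives the $\theta_0$-invariance of $\mathfrak{a}_1$.

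For (2) I would first use (1) to write $\mathfrak{a}_0=\mathfrak{b}_0\oplus\mathfrak{a}$ and $\mathfrak{a}_1=\mathfrak{b}_1\oplus\mathfrak{a}$, where $\mathfrak{b}_0:=\mathfrak{a}_0\cap\mathfrak{k}_1\subset\mathfrak{p}_0\cap\mathfrak{k}_1$ and $\mathfrak{b}_1:=\mathfrak{a}_1\cap\mathfrak{k}_0\subset\mathfrak{k}_0\cap\mathfrak{p}_1$. Since $\mathfrak{a}_0$ and $\mathfrak{a}_1$ are abelian and both contain $\mathfrak{a}$, proving that $\mathfrak{a}_0+\mathfrak{a}_1$ is abelian reduces to showing $[\mathfrak{b}_0,\mathfrak{b}_1]=\{0\}$. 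For $X\in\mathfrak{b}_0$ and $Y\in\mathfrak{b}_1$, tracking the two involutions gives $[X,Y]\in\mathfrak{p}_0\cap\mathfrak{p}_1$ (it lands in $\mathfrak{p}_0$ as a bracket of $\mathfrak{p}_0$ with $\mathfrak{k}_0$, and in $\mathfrak{p}_1$ as a bracket of $\mathfrak{k}_1$ with $\mathfrak{p}_1$), while the Jacobi identity shows that $[X,Y]$ commutes with $\mathfrak{a}$ because $X$ and $Y$ do. The maximality principle then yields $[X,Y]\in\mathfrak{a}$. Finally, for $H\in\mathfrak{a}$ the $\mathrm{Ad}(G)$-invariance of $\langle\cdot,\cdot\rangle$ gives $\langle[X,Y],H\rangle=-\langle Y,[X,H]\rangle=0$, since $[X,H]=0$ inside $\mathfrak{a}_0$; thus the element $[X,Y]$ of $\mathfrak{a}$ is orthogonal to all of $\mathfrak{a}$ and hence vanishes.

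I expect the main obstacle to be the verification in (1) that $X_-$ commutes with $\mathfrak{a}$, as this is precisely the point where the two standing hypotheses — commutativity of $\theta_0$ and $\theta_1$ (to keep $\theta_1$ acting on $\mathfrak{p}_0$) and maximality of $\mathfrak{a}$ in $\mathfrak{p}_0\cap\mathfrak{p}_1$ — must be used together. Once the $\theta_1$-invariance of $\mathfrak{a}_0$ is established, part (2) is a routine combination of this maximality principle with the invariance of the inner product.
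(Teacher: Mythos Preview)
Your proof is correct and follows essentially the same approach as the paper: in both parts you decompose elements of $\mathfrak{a}_i$ according to the $\theta_{1-i}$-eigenspace splitting, use that the $\mathfrak{p}_0\cap\mathfrak{p}_1$-component commutes with $\mathfrak{a}$, invoke the maximality of $\mathfrak{a}$, and in part~(2) finish with the $\mathrm{Ad}(G)$-invariance of the inner product. The only cosmetic difference is that in (1) the paper first records $[\mathfrak{a},\theta_1(\mathfrak{a}_0)]=\theta_1[\mathfrak{a},\mathfrak{a}_0]=\{0\}$ and then separates the two eigencomponents, whereas you compute $[X,H]$ and $[\theta_1(X),H]$ directly; the content is identical.
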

\begin{proof}
We give a proof for the completeness. 

(1) Since $\theta_1(\mathfrak a) = \mathfrak a$, we have
$$
[\mathfrak a,\theta_1({\mathfrak a}_0)]=\theta_1[\mathfrak a,{\mathfrak a}_0] = \{0\}.
$$
For any $H\in {\mathfrak a}_0$, there exist
$H_1$ in ${\mathfrak p}_0 \cap \mathfrak{p}_1$ and $H_2$ in
${\mathfrak p}_0\cap {\mathfrak k}_1$ such that $H=H_1+H_2$ since
${\mathfrak p}_0=({\mathfrak p}_0\cap {\mathfrak p}_1)\oplus ({\mathfrak p}_0\cap {\mathfrak k}_1)$.
For any $X\in \mathfrak a$, we have 
$$
0=[X,\theta_1(H)]=-[X,H_1]+[X,H_2]. 
$$
Since $[X,H_1]$ is in ${\mathfrak k}_1$ and $[X,H_2]$ is in ${\mathfrak{p}}_1$, 
we have $[X,H_1]=0$. 
The maximality of $\mathfrak a$ implies that $H_1$ is in 
$\mathfrak a \subset {\mathfrak a}_0$.
Hence $H_2 = H - H_1$ is in $\mathfrak{a}_0$,
and so $\theta_1(H) = -H_1 + H_2$ is in $\mathfrak{a}_0$.
Thus ${\mathfrak a}_0$ is $\theta_1$-invariant. 
Similarly ${\mathfrak a}_1$ is $\theta_0$-invariant. 

(2) Since ${\mathfrak a}_0$ and ${\mathfrak a}_1$ are $\theta_1$ and 
$\theta_0$-invariant, respectively, we have
$$
{\mathfrak a}_0=\mathfrak a\oplus ({\mathfrak a}_0\cap {\mathfrak k}_1),\quad 
{\mathfrak a}_1=\mathfrak a\oplus ({\mathfrak a}_1\cap {\mathfrak k}_0).
$$
It is sufficient to prove that 
$[{\mathfrak a}_0\cap {\mathfrak k}_1,{\mathfrak a}_1\cap {\mathfrak k}_0] = \{0\}$.
By the Jacobi identity, we have 
$[\mathfrak a,[{\mathfrak a}_0\cap {\mathfrak k}_1,{\mathfrak a}_1\cap {\mathfrak k}_0]]
=\{0\}$. 
Since $[{\mathfrak a}_0\cap {\mathfrak k}_1,{\mathfrak a}_1\cap {\mathfrak k}_0]
\subset \mathfrak{p}_0 \cap \mathfrak{p}_1$,
and $\mathfrak{a}$ is a maximal abelian subspace in $\mathfrak{p}_0 \cap \mathfrak{p}_1$,
we have $[{\mathfrak a}_0\cap {\mathfrak k}_1,{\mathfrak a}_1\cap {\mathfrak k}_0] \subset \mathfrak{a}$.
Since $\langle\;,\;\rangle$ is $\mathrm{Ad}(G)$-invariant, we have 
$\langle
\mathfrak a,[{\mathfrak a}_0\cap {\mathfrak k}_1,{\mathfrak a}_1\cap {\mathfrak k}_0]
\rangle=\{0\}.$
Thus we get 
$[{\mathfrak a}_0\cap {\mathfrak k}_1,{\mathfrak a}_1\cap {\mathfrak k}_0]=\{0\}$.
\end{proof}

By the lemma above, 
we see that $[\mathfrak{a}_0,\mathfrak{a}_1] = \{0\}$,
and therefore if we take a maximal abelian subalgebra $\mathfrak{t}$ of 
$\mathfrak{g}$ containing $\mathfrak{a}_0+\mathfrak{a}_1$ then 
it satisfies the following condition:
\begin{itemize}
\item $\mathfrak{t}$ is invariant under both $\theta_0$ and $\theta_1$,
and $\mathfrak{a}_i = \mathfrak t \cap \mathfrak{p}_i$ for $i = 0,1$.
\end{itemize}
Note that $\mathfrak{a} \subset \mathfrak{t}$.
Let $\Delta$ denote the root system of $(\mathfrak{g}, \mathfrak{t})$
and $W(\Delta)$ the Weyl group of $\Delta$ acting on $\mathfrak{t}$.
For each $\alpha \in \mathfrak{a}$ define a subspace 
$\mathfrak{g}(\mathfrak{a}, \alpha)$ of $\mathfrak{g}^{\mathbb{C}}$ by
$$
\mathfrak{g}(\mathfrak{a}, \alpha)
= \{X \in \mathfrak{g}^{\mathbb{C}} \mid
[H, X] = \sqrt{-1}\langle\alpha, H\rangle X\; (\forall H \in \mathfrak{a})\}
$$
and set $\tilde\Sigma = \{\alpha \in \mathfrak{a} \setminus \{0\} \mid
\mathfrak{g}(\mathfrak{a}, \alpha) \neq \{0\}\}$.
It is known that $\tilde{\Sigma}$ is a root system which spans 
$\mathfrak a$ (\cite{Ikawa2011}, Lemma~4.12).
Let $W(\tilde{\Sigma})$ denote the Weyl group of $\tilde{\Sigma}$ acting on $\mathfrak{a}$.
For $\epsilon = \pm 1$ define a subspace
$\mathfrak{g}(\mathfrak{a}, \alpha, \epsilon)$ of
$\mathfrak{g}(\mathfrak{a}, \alpha)$ by
$$
\mathfrak{g}(\mathfrak{a}, \alpha, \epsilon)
= \{X \in \mathfrak{g}(\mathfrak{a}, \alpha) \mid \theta_0 \theta_1 X = \epsilon X\}.
$$
Since $\mathfrak{g}(\mathfrak{a}, \alpha)$ is
$\theta_0 \theta_1$-invariant,
we have
$$
\mathfrak{g}(\mathfrak{a}, \alpha)
= \mathfrak{g}(\mathfrak{a}, \alpha, 1) \oplus \mathfrak{g}(\mathfrak{a}, \alpha, -1).
$$
Set $\Sigma = \{\alpha \in \tilde\Sigma \mid \mathfrak{g}(\mathfrak{a}, \alpha, 1) \neq \{0\}\}$
and $W = \{\alpha \in \tilde\Sigma \mid \mathfrak{g}(\mathfrak{a}, \alpha, -1) \neq \{0\}\}$.
Then $\tilde{\Sigma} = \Sigma\cup W$.
A point $H\in\mathfrak{a}$ is called a {\it regular point} with respect to $(\tilde{\Sigma},\Sigma,W)$
if $\langle\lambda,H\rangle\not\in\pi\mathbb{Z}$ for any $\lambda\in\Sigma$ and 
$\langle\alpha,H\rangle\not\in \frac{\pi}{2}+\pi\mathbb{Z}$ for any $\alpha\in W$.
A point $H\in\mathfrak a$ is called a {\it singular point} if $H$ is not a 
regular point.

If $L_0$ and $L_1$ are not congruent in $M$,
then $\tilde{\theta}_0$ and $\tilde{\theta}_1$ are not conjugate by the inner automorphisms.
By Proposition 4.39 of \cite{Ikawa2011}, we have $\Sigma \cap W \neq \emptyset$. 
In the sequel, we assume that $L_0$ and $L_1$ are not congruent.

\begin{theorem}\label{thm:main}
Under the situation mentioned above,
for $a =\exp H\; (H\in \mathfrak{a})$, 
the intersection $L_0 \cap \mathrm{Ad}(a)L_1$ of $L_0$ and $\mathrm{Ad}(a)L_1$ 
is discrete if and only if $H$ is a regular point of the triad $(\tilde{\Sigma},\Sigma,W)$.
In this case
$$
L_0 \cap \mathrm{Ad}(a)L_1
= M \cap \mathfrak{a}
= W(\tilde{\Sigma})x_0
= W(R_i)x_0 \cap \mathfrak{a}
= W(\Delta)x_0 \cap \mathfrak{a}
$$
for $i=0,1$.
\end{theorem}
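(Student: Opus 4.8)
The plan is to follow the proof of Theorem~\ref{thm:congruent} verbatim in structure, with the symmetric triad $(\tilde\Sigma,\Sigma,W)$ playing the role of the restricted root system $R$. By~(\ref{eqn:intersect}) the whole question reduces to understanding $\mathfrak p_0\cap\mathrm{Ad}(a)\mathfrak p_1$, so the first step is to prove the triad analogue of~(\ref{eq:4-1}). Using the simultaneous eigenspace decomposition~(\ref{eq:simultaneous decomposition}) and an adapted basis of root vectors (the two-involution counterpart of Lemma~\ref{lem:4-1}, constructed as in \cite{Ikawa2011}), I attach to each $\beta\in\Sigma^+$ a pair $S_\beta\in\mathfrak k_0\cap\mathfrak k_1$, $T_\beta\in\mathfrak p_0\cap\mathfrak p_1$, and to each $\beta\in W^+$ a pair $X_\beta\in\mathfrak k_0\cap\mathfrak p_1$, $Y_\beta\in\mathfrak p_0\cap\mathfrak k_1$, each pair satisfying the $\mathfrak{sl}_2$-relations and rotation formulas of Lemma~\ref{lem:4-1}(2) with respect to $H\in\mathfrak a$. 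Computing $\mathrm{Ad}(a)\mathfrak p_1\cap\mathfrak p_0$ one root space at a time then shows that the $\Sigma$-line through $T_\beta$ survives exactly when $\sin\langle\beta,H\rangle=0$, while the $W$-line through $Y_\beta$ survives exactly when $\cos\langle\beta,H\rangle=0$, so that
\begin{equation*}
\mathfrak p_0\cap\mathrm{Ad}(a)\mathfrak p_1
=\mathfrak a\ \oplus\!\!\sum_{\substack{\beta\in\Sigma^+\\ \langle\beta,H\rangle\in\pi\mathbb Z}}\!\!\mathbb R\,T_\beta
\ \oplus\!\!\sum_{\substack{\beta\in W^+\\ \langle\beta,H\rangle\in\frac{\pi}{2}+\pi\mathbb Z}}\!\!\mathbb R\,Y_\beta .
\end{equation*}
Hence when $H$ is a regular point of $(\tilde\Sigma,\Sigma,W)$ both extra sums vanish, $\mathfrak p_0\cap\mathrm{Ad}(a)\mathfrak p_1=\mathfrak a$, and~(\ref{eqn:intersect}) yields $L_0\cap\mathrm{Ad}(a)L_1=M\cap\mathfrak a$; as $\mathfrak a\subset\mathfrak t$ and $M\cap\mathfrak t$ is a maximal antipodal set by Theorem~\ref{thm:complexflag}, this set is finite and the intersection is discrete.

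For the reverse implication I produce, for each singular $H$, a nonconstant curve inside $L_0\cap\mathrm{Ad}(a)L_1$. Suppose $\langle\lambda,H\rangle\in\pi\mathbb Z$ for some $\lambda\in\Sigma$. By Lemma~\ref{lem:triad-span}(1) the set $W(\Sigma)\lambda$ spans $\mathfrak a$, so some $w\in W(\Sigma)\subset W(\tilde\Sigma)$ satisfies $\langle w\lambda,x_0\rangle\neq0$; putting $X=w^{-1}x_0$ gives $X\in W(\tilde\Sigma)x_0\subset M\cap\mathfrak a\subset L_0$ and $\langle\lambda,X\rangle\neq0$. The curve $P(t)=\mathrm{Ad}(\exp tS_\lambda)X$ stays in $L_0$ since $S_\lambda\in\mathfrak k_0$, and the expansion~(\ref{eq:4-2}) shows $P(t)\in X+\mathbb R\lambda+\mathbb R T_\lambda$. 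Let $\sigma=\mathrm{Ad}(a)\theta_1\mathrm{Ad}(a^{-1})$, whose $(-1)$-eigenspace is $\mathrm{Ad}(a)\mathfrak p_1$. A direct computation using the rotation formulas gives $\sigma T_\lambda=\sin(2\langle\lambda,H\rangle)S_\lambda-\cos(2\langle\lambda,H\rangle)T_\lambda$, which equals $-T_\lambda$ precisely when $\langle\lambda,H\rangle\in\pi\mathbb Z$; since $\sigma X=-X$ and $\sigma\lambda=-\lambda$ hold automatically (as $X,\lambda\in\mathfrak a$), we obtain $\sigma P(t)=-P(t)$, that is $P(t)\in M\cap\mathrm{Ad}(a)\mathfrak p_1=\mathrm{Ad}(a)L_1$. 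Thus $P(t)\in L_0\cap\mathrm{Ad}(a)L_1$ for all $t$, and the intersection is not discrete. The remaining singular case $\langle\alpha,H\rangle\in\frac{\pi}{2}+\pi\mathbb Z$ with $\alpha\in W$ is handled identically, now taking $P(t)=\mathrm{Ad}(\exp tX_\alpha)X$ (again $X_\alpha\in\mathfrak k_0$, so $P(t)\in L_0$) and using Lemma~\ref{lem:triad-span}(2) to secure $\langle\alpha,X\rangle\neq0$; the analogous computation gives $\sigma Y_\alpha=-\sin(2\langle\alpha,H\rangle)X_\alpha+\cos(2\langle\alpha,H\rangle)Y_\alpha=-Y_\alpha$ exactly when $\langle\alpha,H\rangle\in\frac{\pi}{2}+\pi\mathbb Z$. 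The doubling of the angle is precisely what turns the $\pi\mathbb Z$ condition on $\Sigma$ into the $\frac{\pi}{2}+\pi\mathbb Z$ condition on $W$, reproducing the definition of a regular point.

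Finally I identify $M\cap\mathfrak a$ with the Weyl-group orbits. Because $\mathfrak a\subset\mathfrak a_i\subset\mathfrak t$, intersecting~(\ref{eqn:orbit of the Weyl group}) and the identity $M\cap\mathfrak t=W(\Delta)x_0$ with $\mathfrak a$ gives $M\cap\mathfrak a=W(R_i)x_0\cap\mathfrak a=W(\Delta)x_0\cap\mathfrak a$ at once. For the remaining equality $M\cap\mathfrak a=W(\tilde\Sigma)x_0$, the inclusion $\supseteq$ is easy: each generating reflection $s_\beta$ ($\beta\in\tilde\Sigma$) is induced by $\mathrm{Ad}$ of an element of $G$ assembled from the root vectors in $\mathfrak g(\mathfrak a,\beta)$, and this element normalizes $\mathfrak a$, so $W(\tilde\Sigma)x_0\subset M\cap\mathfrak a$ (this is all that the converse above actually used). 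The reverse inclusion is the main obstacle. My approach is to write $\mathfrak a_0=\mathfrak a\oplus(\mathfrak a_0\cap\mathfrak k_1)$, observe that $\theta_1$ permutes $R_0$ and that $\tilde\Sigma$ is the orthogonal projection of $R_0$ onto the $(-1)$-eigenspace $\mathfrak a$ of $\theta_1|_{\mathfrak a_0}$, and then show that any $w\in W(R_0)$ with $wx_0\in\mathfrak a$ --- equivalently $\theta_1 w\theta_1\,x_0=wx_0$ --- can be adjusted, modulo the stabilizer of $x_0$, to an element normalizing $\mathfrak a$ whose restriction to $\mathfrak a$ lies in $W(\tilde\Sigma)$. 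This is the Satake-type comparison between a restricted Weyl group and the Weyl group of its projected root system; executing it carefully, or else quoting the corresponding structural statements for symmetric triads from \cite{Ikawa2011}, is where the genuine difficulty resides, the rest being the explicit computations above.
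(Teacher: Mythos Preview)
Your overall architecture matches the paper's: the same root-space decomposition of $\mathfrak p_0\cap\mathrm{Ad}(a)\mathfrak p_1$ (the paper writes the full spaces $\mathfrak p_\lambda$ and $V_\alpha^\perp(\mathfrak p_0\cap\mathfrak k_1)$ rather than single lines, but that is cosmetic), the same deduction that regularity gives $L_0\cap\mathrm{Ad}(a)L_1=M\cap\mathfrak a$, and curve arguments of the same shape for the singular direction. Your $\sigma$-computation is a correct alternative to the paper's more direct observation that the explicit formula for $\mathrm{Ad}(\exp tX_{\alpha,i})X$ already lands in $\mathfrak p_0\cap\mathrm{Ad}(a)\mathfrak p_1$.

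There is, however, a real gap in the singular direction. You invoke Lemma~\ref{lem:triad-span} to get that $W(\Sigma)\lambda$ (resp.\ $W(\Sigma)\alpha$) spans $\mathfrak a$, but that lemma requires $\Sigma$ to be an \emph{irreducible} root system, and nothing in the setup guarantees this: even when $\mathfrak g$ is simple, the $\Sigma$ produced by $(G,K_0,K_1)$ can be reducible. The paper closes this gap in two steps that you omit: first it reduces to irreducible $(\mathfrak g,\theta_0,\theta_1)$, and then it applies Lemma~\ref{lem:span}, which replaces $\tilde\theta_1$ by a suitable conjugate $\tilde\theta_1'=\tau_g\tilde\theta_1\tau_g^{-1}$ (chosen via the classification of symmetric triads) so that the new $\Sigma'$ \emph{is} irreducible, while neither the notion of regular point nor the intersection changes. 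Only after this nontrivial reduction is the spanning property available; without it your curve has no guaranteed base point $X$ with $\langle\lambda,X\rangle\neq0$.

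For the equality $M\cap\mathfrak a=W(\tilde\Sigma)x_0$ the paper takes a route quite different from your proposed Satake-type comparison, and considerably shorter. By a result of Oshima--Sekiguchi one can choose a positive system $\Delta^+$ of $\Delta$ whose orthogonal projection to $\mathfrak a$ yields a positive system $\tilde\Sigma^+$ of $\tilde\Sigma$; this forces the closed chambers to satisfy $\mathfrak a_+\subset\mathfrak t_+$. Since $M\cap\mathfrak t=W(\Delta)x_0$ meets $\mathfrak t_+$ in exactly one point, $M\cap\mathfrak a_+$ has at most one point, so $M\cap\mathfrak a$ is at most a single $W(\tilde\Sigma)$-orbit; combined with the easy inclusion $W(\tilde\Sigma)x_0\subset M\cap\mathfrak a$ (your argument for which is fine) this gives equality. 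This fundamental-domain trick sidesteps entirely the delicate comparison of $W(R_0)$ with $W(\tilde\Sigma)$ that you flag as the main difficulty.
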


Define closed subgroups $G_{01}$ and $K_{01}$ of $G$ by
$G_{01} = F(\tilde\theta_0 \tilde\theta_1, G)_0$ and
$K_{01} = F(\tilde\theta_0, G_{01})$.
Then the Lie algebras of $G_{01}$ and $K_{01}$ are given by
$$
\mathfrak g_{01}
= (\mathfrak{k}_0 \cap \mathfrak{k}_1) \oplus (\mathfrak{p}_0 \cap \mathfrak{p}_1), \quad
\mathfrak k_{01} = \mathfrak{k}_0 \cap \mathfrak{k}_1,
$$
respectively.
The restricted root system of the compact symmetric pair $(G_{01}, K_{01})$
with respect to $\mathfrak{a}$ coincides with $\Sigma$.
In particular, $\Sigma$ is a root system. 
For $\lambda \in \Sigma$, we define subspaces $\mathfrak{p}_\lambda$
in $\mathfrak{p}_0 \cap \mathfrak{p}_1$ and
$\mathfrak{k}_\lambda$ in $\mathfrak{k}_0 \cap \mathfrak{k}_1$
as follows:
\begin{align*}
\mathfrak{p}_\lambda
& = \{X \in \mathfrak{p}_0 \cap \mathfrak{p}_1
\mid [H, [H, X]] = - \langle\lambda, H\rangle^2 X\; (H \in \mathfrak{a})\} \\
& = \big( \mathfrak{g}(\mathfrak{a}, \lambda, 1) \oplus \mathfrak{g}(\mathfrak{a}, -\lambda, 1) \big)
\cap (\mathfrak{p}_0 \cap \mathfrak{p}_1), \\
\mathfrak{k}_\lambda
& = \{X \in \mathfrak{k}_0 \cap \mathfrak{k}_1
\mid [H, [H, X]] = - \langle\lambda, H\rangle^2 X\; (H \in \mathfrak{a})\}\\
& = \big( \mathfrak{g}(\mathfrak{a}, \lambda, 1) \oplus \mathfrak{g}(\mathfrak{a}, -\lambda, 1) \big)
\cap (\mathfrak{k}_0 \cap \mathfrak{k}_1).
\end{align*}

Define a subalgebra $\mathfrak{k}(0)$ 
in $\mathfrak{k}_0 \cap \mathfrak{k}_1$ by
$$
\mathfrak{k}(0)
= \{X \in \mathfrak{k}_0 \cap \mathfrak{k}_1 \mid [\mathfrak{a}, X] = \{0\}\}.
$$
Then we have orthogonal direct sum decompositions:
$$
\mathfrak{k}_0 \cap \mathfrak{k}_1
= \mathfrak{k}(0) \oplus \sum_{\lambda \in \Sigma^+} \mathfrak{k}_\lambda, \quad
\mathfrak{p}_0 \cap \mathfrak{p}_1
= \mathfrak{a} \oplus \sum_{\lambda \in \Sigma^+} \mathfrak{p}_\lambda.
$$
Define subspaces of $\mathfrak{k}_0 \cap \mathfrak{p}_1$
and $\mathfrak{p}_0 \cap \mathfrak{k}_1$ by
\begin{align*}
V(\mathfrak{k}_0 \cap \mathfrak{p}_1)
& = \{X \in \mathfrak{k}_0 \cap \mathfrak{p}_1 \mid
[\mathfrak a, X] = \{0\}\}, \\
V(\mathfrak{p}_0 \cap \mathfrak{k}_1)
& = \{X \in \mathfrak{p}_0 \cap \mathfrak{k}_1 \mid
[\mathfrak a, X] = \{0\}\}, \\
V^\perp(\mathfrak{k}_0 \cap \mathfrak{p}_1)
& = \{X \in \mathfrak{k}_0 \cap \mathfrak{p}_1 \mid
X \perp V(\mathfrak{k}_0 \cap \mathfrak{p}_1)\}, \\
V^\perp(\mathfrak{p}_0 \cap \mathfrak{k}_1)
& = \{X \in \mathfrak{p}_0 \cap \mathfrak{k}_1 \mid
X \perp V(\mathfrak{p}_0 \cap \mathfrak{k}_1)\}.
\end{align*}
For $\alpha \in W$ define subspaces
$V_\alpha^\perp(\mathfrak{k}_0 \cap \mathfrak{p}_1)$
in $V^\perp(\mathfrak{k}_0 \cap \mathfrak{p}_1)$, and
$V_\alpha^\perp(\mathfrak{p}_0 \cap \mathfrak{k}_1)$
in $V^\perp(\mathfrak{p}_0 \cap \mathfrak{k}_1)$ by
\begin{align*}
V_\alpha^\perp(\mathfrak{k}_0 \cap \mathfrak{p}_1)
& = \{X \in V^\perp(\mathfrak{k}_0 \cap \mathfrak{p}_1) \mid
[H, [H, X]] = -\langle\alpha, H\rangle^2 X\; (H \in \mathfrak{a})\} \\
& = \big( \mathfrak{g}(\mathfrak{a}, \alpha, -1) \oplus \mathfrak{g}(\mathfrak{a}, -\alpha, -1) \big)
\cap (\mathfrak{k}_0 \cap \mathfrak{p}_1), \\
V_\alpha^\perp(\mathfrak{p}_0 \cap \mathfrak{k}_1)
& = \{X \in V^\perp(\mathfrak{p}_0 \cap \mathfrak{k}_1) \mid
[H, [H, X]] = -\langle\alpha, H\rangle^2 X\; (H \in \mathfrak{a})\} \\
& = \big( \mathfrak{g}(\mathfrak{a}, \alpha, -1) \oplus \mathfrak{g}(\mathfrak{a}, -\alpha, -1) \big)
\cap (\mathfrak{p}_0 \cap \mathfrak{k}_1).
\end{align*}
Then we have the orthogonal direct sum decompositions:
$$
V^\perp(\mathfrak{k}_0 \cap \mathfrak{p}_1)
= \sum_{\alpha \in W^+} V_\alpha^\perp(\mathfrak{k}_0 \cap \mathfrak{p}_1),
\quad
V^\perp(\mathfrak{p}_0 \cap \mathfrak{k}_1)
= \sum_{\alpha \in W^+} V_\alpha^\perp(\mathfrak{p}_0 \cap \mathfrak{k}_1).
$$
For $\lambda \in \Sigma$ and $\alpha \in W$, set
\begin{align*}
m(\lambda)
&:= \dim_{\mathbb C} \mathfrak g(\mathfrak a, \lambda, 1)
= \dim_{\mathbb{R}} \mathfrak{p}_{\lambda}
= \dim_{\mathbb{R}} \mathfrak{k}_{\lambda}, \\
n(\alpha)
&:= \dim_{\mathbb C} \mathfrak g(\mathfrak a, \alpha, -1)
= \dim_{\mathbb{R}} V_\alpha^\perp(\mathfrak{k}_0 \cap \mathfrak{p}_1)
= \dim_{\mathbb{R}} V_\alpha^\perp(\mathfrak{p}_0 \cap \mathfrak{k}_1).
\end{align*}

\begin{lemma}[Lemma 4.16 in \cite{Ikawa2011}]\label{lem:bracket of triad}
\begin{enumerate}
\item
For any $\alpha \in W^+$,
\begin{align*}
[\mathfrak{a}, V_\alpha^\perp(\mathfrak{k}_0 \cap \mathfrak{p}_1)]
& = V_\alpha^\perp(\mathfrak{p}_0 \cap \mathfrak{k}_1), \\
[\mathfrak{a}, V_\alpha^\perp(\mathfrak{p}_0 \cap \mathfrak{k}_1)]
& = V_\alpha^\perp(\mathfrak{k}_0 \cap \mathfrak{p}_1).
\end{align*}
\item
For each $\alpha \in W^+$,
there exist orthonormal bases
$\{X_{\alpha,i}\}_{1 \leq i \leq n(\alpha)}$ and
$\{Y_{\alpha,i}\}_{1 \leq i \leq n(\alpha)}$ of
$V_\alpha^\perp(\mathfrak{k}_0 \cap \mathfrak{p}_1)$ and
$V_\alpha^\perp(\mathfrak{p}_0 \cap \mathfrak{k}_1)$ respectively
such that, for any $H \in \mathfrak{a}$,
\begin{align*}
& [H, X_{\alpha,i}] = \langle\alpha, H\rangle Y_{\alpha,i}, \quad
[H, Y_{\alpha,i}] = -\langle\alpha, H\rangle X_{\alpha,i}, \\
& [X_{\alpha,i}, Y_{\alpha,i}] = \alpha,\\
& \mathrm{Ad}(\exp H)X_{\alpha,i}=\cos\langle\alpha,H\rangle X_{\alpha,i}
+\sin\langle\alpha,H\rangle Y_{\alpha,i},\\
& \mathrm{Ad}(\exp H)Y_{\alpha,i}=-\sin\langle\alpha,H\rangle X_{\alpha,i}
+\cos\langle\alpha,H\rangle Y_{\alpha,i}.
\end{align*}
\end{enumerate}
\end{lemma}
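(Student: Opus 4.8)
The plan is to adapt the proof of Lemma~\ref{lem:4-1} from a single symmetric pair to the simultaneous eigenspace decomposition~\eqref{eq:simultaneous decomposition} of the commuting pair $\theta_0,\theta_1$, using the third involution $\theta_0\theta_1$ for the extra bookkeeping. Throughout I would fix an element $H_\alpha\in\mathfrak{a}$ with $\langle\alpha,H_\alpha\rangle=1$ and write $U_\alpha:=\big(\mathfrak{g}(\mathfrak{a},\alpha,-1)\oplus\mathfrak{g}(\mathfrak{a},-\alpha,-1)\big)\cap\mathfrak{g}$, so that $U_\alpha=V_\alpha^\perp(\mathfrak{k}_0\cap\mathfrak{p}_1)\oplus V_\alpha^\perp(\mathfrak{p}_0\cap\mathfrak{k}_1)$ and $\mathrm{ad}(H_\alpha)^2=-\mathrm{Id}$ on $U_\alpha$ by the eigenvalue relations.

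First I would prove (1). For $X\in V_\alpha^\perp(\mathfrak{k}_0\cap\mathfrak{p}_1)$ and $H\in\mathfrak{a}\subset\mathfrak{p}_0\cap\mathfrak{p}_1$, a direct eigenvalue check gives $\theta_0[H,X]=[-H,X]=-[H,X]$ and $\theta_1[H,X]=[-H,-X]=[H,X]$, so $[H,X]\in\mathfrak{p}_0\cap\mathfrak{k}_1$; since $\mathrm{ad}(H')$ commutes with $\mathrm{ad}(H)$ for all $H'\in\mathfrak{a}$, the vector $[H,X]$ still satisfies $[H',[H',[H,X]]]=-\langle\alpha,H'\rangle^2[H,X]$, and $\mathrm{Ad}(G)$-invariance of $\langle\,,\,\rangle$ makes it orthogonal to the centralizer $V(\mathfrak{p}_0\cap\mathfrak{k}_1)$. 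Hence $[\mathfrak{a},V_\alpha^\perp(\mathfrak{k}_0\cap\mathfrak{p}_1)]\subseteq V_\alpha^\perp(\mathfrak{p}_0\cap\mathfrak{k}_1)$, and symmetrically for the reverse inclusion. To upgrade these to equalities I would observe that $\mathrm{ad}(H_\alpha)$ anticommutes with $\theta_0$ (because $\theta_0H_\alpha=-H_\alpha$) and squares to $-\mathrm{Id}$ on $U_\alpha$; thus $\mathrm{ad}(H_\alpha)$ is an orthogonal isomorphism interchanging the two $\theta_0$-eigenspaces of $U_\alpha$, which are precisely $V_\alpha^\perp(\mathfrak{k}_0\cap\mathfrak{p}_1)$ and $V_\alpha^\perp(\mathfrak{p}_0\cap\mathfrak{k}_1)$. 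This simultaneously yields the surjectivity needed for (1) and the equality $\dim V_\alpha^\perp(\mathfrak{k}_0\cap\mathfrak{p}_1)=\dim V_\alpha^\perp(\mathfrak{p}_0\cap\mathfrak{k}_1)=n(\alpha)$.

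For (2) I would take any orthonormal basis $\{X_{\alpha,i}\}$ of $V_\alpha^\perp(\mathfrak{k}_0\cap\mathfrak{p}_1)$ and set $Y_{\alpha,i}:=[H_\alpha,X_{\alpha,i}]$; by the orthogonality of $\mathrm{ad}(H_\alpha)$ just established, $\{Y_{\alpha,i}\}$ is an orthonormal basis of $V_\alpha^\perp(\mathfrak{p}_0\cap\mathfrak{k}_1)$. The crux is to verify $[H,X_{\alpha,i}]=\langle\alpha,H\rangle Y_{\alpha,i}$ for \emph{every} $H$, not just for $H=H_\alpha$, i.e.\ that the target direction is independent of $H$. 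I would obtain this by passing to the complexification: writing $X_{\alpha,i}=Z_i^++Z_i^-$ with $Z_i^\pm\in\mathfrak{g}(\mathfrak{a},\pm\alpha,-1)$, the defining relation of the root space gives $\mathrm{ad}(H)Z_i^\pm=\pm\sqrt{-1}\langle\alpha,H\rangle Z_i^\pm$, so that $[H,X_{\alpha,i}]=\langle\alpha,H\rangle\,\sqrt{-1}(Z_i^+-Z_i^-)$ is a fixed vector scaled by $\langle\alpha,H\rangle$; evaluating at $H_\alpha$ identifies that vector with $Y_{\alpha,i}$. The companion relation $[H,Y_{\alpha,i}]=-\langle\alpha,H\rangle X_{\alpha,i}$ falls out of the same computation.

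The remaining relation $[X_{\alpha,i},Y_{\alpha,i}]=\alpha$ is where I expect the only genuine subtlety. Using invariance and the relation just proved, for each $H\in\mathfrak{a}$ one gets $\langle[X_{\alpha,i},Y_{\alpha,i}],H\rangle=\langle Y_{\alpha,i},[H,X_{\alpha,i}]\rangle=\langle\alpha,H\rangle$, so the $\mathfrak{a}$-component of $[X_{\alpha,i},Y_{\alpha,i}]$ is exactly $\alpha$. One then rules out any other component: the $\theta_0,\theta_1$-bookkeeping places $[X_{\alpha,i},Y_{\alpha,i}]$ in $\mathfrak{p}_0\cap\mathfrak{p}_1$, while the complex decomposition gives $[X_{\alpha,i},Y_{\alpha,i}]=-2\sqrt{-1}[Z_i^+,Z_i^-]\in\mathfrak{g}(\mathfrak{a},0)$, so it centralizes $\mathfrak{a}$; maximality of $\mathfrak{a}$ in $\mathfrak{p}_0\cap\mathfrak{p}_1$ forces it into $\mathfrak{a}$, whence it equals $\alpha$. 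Finally, the two $\cos/\sin$ formulas follow by integrating $\tfrac{d}{dt}\mathrm{Ad}(\exp tH)=\mathrm{Ad}(\exp tH)\circ\mathrm{ad}(H)$ on the $\mathrm{ad}(H)$-invariant plane $\mathrm{span}\{X_{\alpha,i},Y_{\alpha,i}\}$, on which $\mathrm{ad}(H)$ is the infinitesimal rotation with the coefficients above. The main obstacle, as flagged, is making the $H$-independence of $Y_{\alpha,i}$ transparent, and the device that removes it is descending to the complex root spaces $\mathfrak{g}(\mathfrak{a},\pm\alpha,-1)$, where $\mathrm{ad}(H)$ acts by the scalar $\pm\sqrt{-1}\langle\alpha,H\rangle$.
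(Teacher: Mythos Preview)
The paper does not give its own proof of this lemma; it is simply quoted as Lemma~4.16 of \cite{Ikawa2011} and used without argument. So there is no in-paper proof to compare against.

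That said, your argument is sound and follows the standard route one would expect for such a result (and is in the spirit of the analogous Lemma~\ref{lem:4-1} from \cite{Takeuchi} that the paper also quotes). The key points---that $\mathrm{ad}(H_\alpha)$ is a skew-adjoint operator on $U_\alpha$ squaring to $-\mathrm{Id}$ and interchanging the two $\theta_0$-eigenspaces, that the complex root-space decomposition $X_{\alpha,i}=Z_i^++Z_i^-$ makes the $H$-independence of $[H,X_{\alpha,i}]/\langle\alpha,H\rangle$ transparent, and that $[X_{\alpha,i},Y_{\alpha,i}]$ lands in $\mathfrak{p}_0\cap\mathfrak{p}_1\cap\mathfrak{g}(\mathfrak{a},0)=\mathfrak{a}$ by maximality---are all correctly identified and handled. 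One tiny remark: in the inner-product step $\langle[X_{\alpha,i},Y_{\alpha,i}],H\rangle=\langle Y_{\alpha,i},[H,X_{\alpha,i}]\rangle$ you are implicitly using the skew-adjointness of $\mathrm{ad}(X_{\alpha,i})$, so the sign convention should be checked once (it works out with $\langle[X,Y],H\rangle=\langle Y,[H,X]\rangle$ as you wrote, since $\langle[X,Y],H\rangle=-\langle Y,[X,H]\rangle=\langle Y,[H,X]\rangle$).
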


We know that 
\begin{equation}\label{eq:Weyl gr of tildeSigma}
W(\tilde\Sigma) \subset \{\mathrm{Ad}(g)|_{\mathfrak a} \mid
g \in G,\, \mathrm{Ad}(g)\mathfrak a = \mathfrak a\}.
\end{equation}
See Corollary 4.17 and Lemma 4.4 in \cite{Ikawa2011}
for the detail.

The triple $(\mathfrak g,\theta_0,\theta_1)$ is said to be {\it irreducible} if there exists no nontrivial ideal 
of $\mathfrak g$ which is invariant under $\theta_0$ and $\theta_1$. 
The triple $(G,K_0,K_1)$ is said to be {\it irreducible} if 
$(\mathfrak g,\theta_0,\theta_1)$ is irreducible.

Now we are in a position to prove Theorem~\ref{thm:main}. 

\begin{proof}[Proof of Theorem~\ref{thm:main}]
The triple $(\mathfrak g,\theta_0,\theta_1)$ can be decomposed 
to the direct sum of irreducible factors. 
Recall that, as mentioned in Section~\ref{sec:complex flag manifolds},
$x_0\in \mathfrak{a}$ has a non-zero component in each simple ideal of $\mathfrak g$.
Thus it suffices to prove the theorem for each irreducible component (see Subsection~\ref{sec:general case} for details).

In what follows we assume that $(\mathfrak g,\theta_0,\theta_1)$ 
is irreducible.
By Lemma~\ref{lem:bracket of triad} we have
$$
\mathfrak{p}_0 \cap \mathrm{Ad}(a)\mathfrak{p}_1
= \mathfrak{a} \oplus
\sum_{\lambda\in\Sigma^+\atop\langle\lambda, H\rangle\in\pi\mathbb Z}
\mathfrak{p}_\lambda \oplus
\sum_{\alpha\in W^+\atop\langle\alpha, H\rangle\in\frac{\pi}2+\pi\mathbb Z}
V_\alpha^\perp(\mathfrak{p}_0 \cap \mathfrak{k}_1),
$$
where $a=\exp H$.
Recall that a point $H\in\mathfrak{a}$ is a regular point with respect to $(\tilde{\Sigma},\Sigma,W)$
if $\langle\lambda,H\rangle\not\in\pi\mathbb{Z}$ for each $\lambda\in\Sigma$ and 
$\langle\alpha,H\rangle\not\in \frac{\pi}{2}+\pi\mathbb{Z}$ for each $\alpha\in W$, 
that is equivalent to the condition 
${\mathfrak p}_0\cap \mathrm{Ad}(a){\mathfrak p}_1=\mathfrak a$.
By using (\ref{eqn:intersect}), we have
$$
L_0 \cap \mathrm{Ad}(a)L_1
= M \cap \left(\mathfrak{a} \oplus
\sum_{\lambda\in\Sigma^+\atop\langle\lambda, H\rangle\in\pi\mathbb Z}
\mathfrak{p}_\lambda \oplus
\sum_{\alpha\in W^+\atop\langle\alpha, H\rangle\in\frac{\pi}2+\pi\mathbb Z}
V_\alpha^\perp(\mathfrak{p}_0 \cap \mathfrak{k}_1)\right).
$$

If $H$ is a regular point of $(\tilde{\Sigma}, \Sigma ,W)$,
then by (\ref{eqn:orbit of the Weyl group})
$$
L_0 \cap \mathrm{Ad}(a)L_1
= M \cap \mathfrak{a}
= (M\cap \mathfrak{a}_i) \cap \mathfrak{a} 
= W(R_i) x_0 \cap \mathfrak{a} \quad (i = 0,1),
$$
which implies that the intersection is discrete. 

We assume that $H$ is a singular point of $(\tilde{\Sigma}, \Sigma, W)$. 
We will show that the intersection $L_0 \cap \mathrm{Ad}(a)L_1$ is not discrete. 
By the assumption, there are two cases:
\begin{enumerate}
\item[(i)] there exists $\lambda \in \Sigma$ such that $\langle\lambda ,H\rangle\in \pi\mathbb{Z}$,
\item[(ii)] there exists $\alpha \in W$ such that $\langle\alpha ,H\rangle \in \frac{\pi}{2}+\pi\mathbb{Z}$. 
\end{enumerate}
In both cases, we may assume that $W(\Sigma)\lambda$ spans $\mathfrak{a}$ for each 
$\lambda\in \Sigma\cup W$ by Lemma~\ref{lem:span} mentioned below.
In the case of (i) we can show that the intersection $L_0 \cap \mathrm{Ad}(a)L_1$ is not discrete
in a similar manner of the proof of Theorem~\ref{thm:congruent}.
In the case of (ii) there exists $X \in W(\Sigma) x_0$ such that $\langle\alpha, X\rangle \neq 0$
since $W(\Sigma)\alpha$ spans $\mathfrak{a}$.
By Lemma~\ref{lem:bracket of triad} we have 
$$
\mathrm{Ad}(\exp tX_{\alpha ,i})X=X+\frac{\langle\alpha ,X\rangle}{|\alpha|^2}(\cos (t|\alpha|)-1)\alpha 
-\frac{\langle\alpha ,X\rangle}{|\alpha|}\sin (t|\alpha|)Y_{\alpha ,i}.
$$
Since the left-hand side is a curve in $M$,
and the right-hand side is a circle in $\mathfrak{p}_0 \cap \mathrm{Ad}(a)\mathfrak{p}_1$, 
the curve $\mathrm{Ad}(\exp tX_{\alpha ,i})X$ is a circle in $L_0 \cap \mathrm{Ad}(a)L_1$. 
Hence the intersection $L_0 \cap \mathrm{Ad}(a)L_1$ is not discrete. 

Finally, we shall prove that $W(\tilde{\Sigma}) x_0 = M \cap \mathfrak{a}$
where $M = \mathrm{Ad}(G) x_0$
(Note that this equality holds for any $x_0 \in \mathfrak{a}\setminus \{0\}$).
Since $x_0$ is in $\mathfrak{a}$, we have
$W(\tilde{\Sigma})x_0\subset \mathfrak{a}$.
The inclusion (\ref{eq:Weyl gr of tildeSigma}) implies $W(\tilde{\Sigma})x_0\subset \mathrm{Ad}(G)x_0=M$.
Hence $W(\tilde{\Sigma})x_0\subset M\cap \mathfrak{a}$.
Since $M$ is $W(\tilde{\Sigma})$-invariant by (\ref{eq:Weyl gr of tildeSigma}), 
in order to prove $W(\tilde{\Sigma}) x_0 = M \cap \mathfrak{a}$,
it suffices to show that $M \cap \mathfrak{a}$ is a single $W(\tilde{\Sigma})$-orbit.
Recall that $M\cap\mathfrak{t}=\mathrm{Ad}(G)x_0\cap \mathfrak{t}$ 
coincides with a single $W(\Delta)$-orbit $W(\Delta)x_0$ by Theorem~\ref{thm:complexflag}.
By (3.8.4) in \cite{Oshima-Sekiguchi84},
there exists a positive system $\Delta^+$ of $\Delta$ such that
$$
\tilde{\Sigma}^+ := \big\{ \bar{\alpha} \ \big| \ \alpha \in \Delta^+ \big\} \setminus \{  0 \}
$$
is a positive system of $\tilde{\Sigma}$,
where $\bar{\alpha} \in \mathfrak{a}$ is the orthogonal projection of $\alpha \in \mathfrak{t}$.
We set
\begin{align*}
\mathfrak{a}_+ &:= \{ X \in \mathfrak{a} \mid \langle \lambda, X \rangle \geq 0 \text{ for any } \lambda \in \tilde{\Sigma}^+ \}, \\
\mathfrak{t}_+ &:= \{ X \in \mathfrak{t} \mid \langle \alpha, X \rangle \geq 0 \text{ for any } \alpha \in \Delta^+ \}.
\end{align*}
Then $\mathfrak{a}_+$ and $\mathfrak{t}_+$ are closed fundamental domains of the $W(\tilde{\Sigma})$-action 
on $\mathfrak{a}$
and the $W(\Delta)$-action on $\mathfrak{t}$, respectively.
By the definition of $\tilde{\Sigma}^+$, we have $\mathfrak{a}_+ \subset \mathfrak{t}_+$.
Since $M \cap \mathfrak{t}$ is a single $W(\Delta)$-orbit,
we see that $\# (M \cap \mathfrak{t}_+) = 1$
by \cite[p.~293, Theorem 2.22]{Helgason}
and hence $\# (M \cap \mathfrak{a}_+) \leq 1$.
This implies that $M \cap \mathfrak{a}$ consists of at most one $W(\tilde{\Sigma})$-orbit.
Therefore we have $W(\tilde{\Sigma}) x_0 = M \cap \mathfrak{a} 
= (M\cap \mathfrak t)\cap \mathfrak a = W(\Delta)x_0 \cap \mathfrak a$.
\end{proof}

\subsection{Technical lemmas}
\label{sec:key-lemma}

In this subsection, we establish Lemma~\ref{lem:span} below, which was used in Subsection~\ref{sec:non-congruent intersection}.
We also prove Lemma~\ref{lem:regularity} below,
which is not used in this paper but seems to be useful for further geometric applications of symmetric triads.

Let $G$ be a compact connected semisimple Lie group, and $(G,K_0,K_1)$ be a compact symmetric triad. 
Denote by $\tilde\theta_i$ the involution of $G$ which defines $K_i$ for $i=0,1$. 
We assume that $\tilde\theta_0$ and $\tilde\theta_1$ commute with each other. 
We decompose the Lie algebra $\mathfrak g$ of $G$ as in (\ref{eqn:canonical decompositions}). 
Take and fix a maximal abelian subspace $\mathfrak a$ of ${\mathfrak p}_0\cap {\mathfrak p}_1$. 
For $x_0\in \mathfrak a\setminus\{0\}$ the orbit $M=\mathrm{Ad}(G)x_0$ is a complex flag manifold
whose real dimension is greater than or equal to $2$.
For $H\in\mathfrak a$ we set $a=\exp H$. 
We define an involution $\theta_1'$ on $\mathfrak g$
by $\theta_1' = \mathrm{Ad}(a) \theta_1 \mathrm{Ad}(a^{-1})$.

We retain the setting and notations as in Subsection~\ref{sec:non-congruent intersection}. 
The purpose of this subsection is to show the following two lemmas.

\begin{lemma}\label{lem:regularity}
There exist a regular point 
$H\in \mathfrak{a}$ and $n\in\mathbb{N}$ such that 
$(\theta_0 \theta_1')^{2n}$ is the identity mapping on $M$.
\end{lemma}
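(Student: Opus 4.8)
The plan is to collapse the even iterate $(\theta_0\theta_1')^{2n}$ to a single power of $\mathrm{Ad}(a)$ acting on $\mathfrak g$, to read off exactly when such a power is trivial on $M$, and then to exhibit a regular $H$ realizing this triviality by means of Lemma~\ref{lem:ST}.

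First I would record two elementary consequences of $H\in\mathfrak a\subset\mathfrak p_0\cap\mathfrak p_1$. Since $\theta_0 H=\theta_1 H=-H$, the induced group automorphism satisfies $\tilde\theta_0(a)=\exp(\theta_0 H)=a^{-1}$, whereas the product $\sigma:=\theta_0\theta_1=\theta_1\theta_0$ fixes $H$, so that $\tilde\sigma(a)=a$ and, by commutativity, $\sigma^2=\mathrm{id}$. Using the intertwining relation $\theta_0\circ\mathrm{Ad}(a)=\mathrm{Ad}(\tilde\theta_0(a))\circ\theta_0=\mathrm{Ad}(a^{-1})\circ\theta_0$, I rewrite
\[
\theta_0\theta_1'=\theta_0\,\mathrm{Ad}(a)\,\theta_1\,\mathrm{Ad}(a^{-1})=\mathrm{Ad}(a^{-1})\,\sigma\,\mathrm{Ad}(a^{-1}).
\]
Squaring, and commuting $\sigma$ past $\mathrm{Ad}(a^{-2})$ (which is legitimate because $\tilde\sigma(a)=a$), the two copies of $\sigma$ cancel via $\sigma^2=\mathrm{id}$, giving
\[
(\theta_0\theta_1')^2=\mathrm{Ad}(a^{-1})\,\mathrm{Ad}(a^{-2})\,\mathrm{Ad}(a^{-1})=\mathrm{Ad}(a^{-4}),
\]
and hence $(\theta_0\theta_1')^{2n}=\mathrm{Ad}(a^{-4n})=\mathrm{Ad}(\exp(-4nH))$ for every $n\in\mathbb N$.

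Next I would translate the desired conclusion into a lattice condition. The operator $\mathrm{Ad}(\exp(-4nH))$ acts on the complexified root space of each $\alpha\in\Delta$ by the scalar $\exp(-4n\sqrt{-1}\langle\alpha,H\rangle)$ and trivially on $\mathfrak t^{\mathbb C}$, so it equals $\mathrm{id}_{\mathfrak g}$ precisely when $\langle\alpha,H\rangle\in\frac{\pi}{2n}\mathbb Z$ for all $\alpha\in\Delta$. Since $H\in\mathfrak a$ forces $\langle\alpha,H\rangle=\langle\bar\alpha,H\rangle$ and $\tilde\Sigma=\{\bar\alpha\mid\alpha\in\Delta,\ \bar\alpha\neq0\}$, this is exactly the requirement $\langle\lambda,nH\rangle\in\frac{\pi}{2}\mathbb Z$ for all $\lambda\in\tilde\Sigma$, that is, $nH\in\Gamma$ by \eqref{eqn:lattice}. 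In particular, once $nH\in\Gamma$ we have $(\theta_0\theta_1')^{2n}=\mathrm{id}_{\mathfrak g}$, which a fortiori restricts to the identity mapping on $M$.

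It then remains only to secure regularity simultaneously with $nH\in\Gamma$, and this is exactly the content of Lemma~\ref{lem:ST}: for any integer $n>\sum_{i=1}^r m_i$ there is a point $H_0\in P_0\subset\mathfrak a_r$, hence a regular point, with $nH_0\in\Gamma$. Taking this $H_0$ and $n$ completes the argument. The one place demanding care is the algebraic reduction in the second paragraph---keeping track of how $\theta_0$ and $\sigma$ commute with the $\mathrm{Ad}(a)$-factors and of the cancellation $\sigma^2=\mathrm{id}$---together with the precise identification of ``$(\theta_0\theta_1')^{2n}$ trivial on $M$'' with the membership $nH\in\Gamma$; after that, Lemma~\ref{lem:ST} supplies the regular point for free.
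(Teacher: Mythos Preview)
Your reduction of $(\theta_0\theta_1')^{2n}$ to $\mathrm{Ad}(a^{-4n})$ and the translation of its triviality on $\mathfrak g$ into the lattice condition $nH\in\Gamma$ are exactly what the paper does, and your one-sided implication (triviality on $\mathfrak g$ forces triviality on $M\subset\mathfrak g$) is enough for the lemma. The problem is the final step. Lemma~\ref{lem:ST} is stated and proved for an abstract \emph{symmetric triad}, whose first axiom demands that $\tilde\Sigma$ be an \emph{irreducible} root system; the cell $P_0$, the distinguished element $\tilde\alpha\in W^+$, the integers $m_i$, and the description $\Gamma=\sum_i\mathbb Z\,m_iH_i$ on which that lemma rests are all set up only under that hypothesis. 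In the setting of Lemma~\ref{lem:regularity}, however, $G$ is merely semisimple and $(\mathfrak g,\theta_0,\theta_1)$ need not be irreducible, so the triad produced in Subsection~\ref{sec:non-congruent intersection} need not be a symmetric triad and you cannot invoke Lemma~\ref{lem:ST} directly.

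The paper fills this gap by decomposing $(\mathfrak g,\theta_0,\theta_1)$ into irreducible factors, using Matsuki's classification (Lemma~\ref{lem:Matsuki}) to identify the possible types, and then checking in each type that either the triad is a genuine symmetric triad (so Lemma~\ref{lem:ST} applies) or that $\tilde\Sigma=\Sigma=W$ is an ordinary irreducible root system (where the conclusion follows from standard compact Lie group or symmetric space theory). Having obtained $H^{(j)}\in\mathfrak a_r^{(j)}$ and $n_j\in\mathbb N$ with $n_jH^{(j)}\in\Gamma^{(j)}$ for each factor, the paper sets $H=\sum_jH^{(j)}$ and $n=\prod_jn_j$. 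Your argument becomes complete once you insert this reduction before appealing to Lemma~\ref{lem:ST}.
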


By the following lemma,
we can suppose the conditions (3-2) and (3-3) below
to prove Theorem~\ref{thm:main} without loss of generality.

\begin{lemma}\label{lem:span}
Let $(G,K_0,K_1)$ be an irreducible compact symmetric triad which satisfies $\tilde\theta_0 \tilde\theta_1 = \tilde\theta_1 \tilde\theta_0$.
Then there exists $g\in G$ which satisfies the following conditions.
\begin{enumerate}
\item Two involutions $\tilde{\theta}_0$ and $\tilde{\theta}_1'$ are commutative with each other,
where $\tilde{\theta}_1'=\tau_g\circ \tilde{\theta}_1\circ \tau_g^{-1}$.
\item $\mathfrak{a}$ is also a maximal abelian subspace of $\mathfrak{p}_0\cap \mathfrak{p}_1'$,
where $\mathfrak{p}_1' = F(-\theta_1', \mathfrak{g})$.
\item Denote by $(\tilde{\Sigma},\Sigma,W)$ and $(\tilde{\Sigma}',\Sigma',W')$ the triads defined by 
$(G,K_0,K_1)$ and $(G,K_0,K_1')$, respectively, where $K_1'=F(\tilde{\theta}_1', G)_0$.
Then the following (3-1), (3-2) and (3-3) hold.
  \begin{enumerate}
  \item For any $H\in\mathfrak{a}$, $H$ is a regular point with respect to $(\tilde{\Sigma},\Sigma,W)$ if 
and only if $H$ is a regular point with respect to $(\tilde{\Sigma}',\Sigma',W')$. 
  \item For any $\lambda\in\Sigma'$, the orbit $W(\Sigma')\lambda$ spans $\mathfrak a$.
  \item For any $\alpha \in W'$, the orbit 
$W(\Sigma')\alpha$ spans $\mathfrak a$.
  \end{enumerate}
\end{enumerate}
\end{lemma}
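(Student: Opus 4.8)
The plan is to produce the required element in the form $g=\exp H_0$ for a suitable $H_0$ in the lattice $\Gamma$ of (\ref{eqn:lattice}), so that $\theta_1'=\mathrm{Ad}(\exp H_0)\,\theta_1\,\mathrm{Ad}(\exp H_0)^{-1}$, and to check that such a conjugation yields (1) and (2) automatically. For (1), since $H_0\in\mathfrak a\subset\mathfrak p_0\cap\mathfrak p_1$ one has $\theta_0H_0=\theta_1H_0=-H_0$ and hence $\theta_0\theta_1H_0=H_0$; a direct computation then gives $\theta_0\theta_1'=\mathrm{Ad}(\exp(-2H_0))\,\theta_0\theta_1$ and $\theta_1'\theta_0=\mathrm{Ad}(\exp(2H_0))\,\theta_0\theta_1$, so $\theta_0$ and $\theta_1'$ commute as soon as $\mathrm{Ad}(\exp 4H_0)$ is the identity. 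Because $H_0\in\Gamma$ means $\langle\lambda,H_0\rangle\in\frac\pi2\mathbb Z$ for all $\lambda\in\tilde\Sigma$, every root of $\mathfrak g$ takes a value in $\frac\pi2\mathbb Z$ on $H_0$, whence $\exp 4H_0\in Z(G)$ and (1) holds. For (2), $\mathrm{Ad}(\exp H_0)$ fixes the centralizer of $\mathfrak a$ pointwise, so $\theta_1'$ and $\theta_1$ agree there; thus the $0$-eigenspace of $\mathrm{ad}(\mathfrak a)$ inside $\mathfrak p_0\cap\mathfrak p_1'$ coincides with that inside $\mathfrak p_0\cap\mathfrak p_1$, namely $\mathfrak a$, and $\mathfrak a$ remains maximal abelian in $\mathfrak p_0\cap\mathfrak p_1'$.

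Next I would record how the triad changes. On each $\mathfrak g(\mathfrak a,\alpha)$ the operator $\mathrm{Ad}(\exp(-2H_0))$ acts by the scalar $(-1)^{m(\alpha)}$, where $m(\alpha):=2\langle\alpha,H_0\rangle/\pi\in\mathbb Z$, so $\theta_0\theta_1'=(-1)^{m(\alpha)}\theta_0\theta_1$ there. Consequently $\mathfrak g(\mathfrak a,\alpha,1)$ and $\mathfrak g(\mathfrak a,\alpha,-1)$ are interchanged exactly for the roots with $m(\alpha)$ odd, i.e. the roots on which $H_0$ is an odd multiple of $\pi/2$ swap their membership between $\Sigma$ and $W$, while $\tilde\Sigma'=\tilde\Sigma$. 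Thus $H_0$ is a handle that reassigns roots between $\Sigma$ and $W$. If $\Sigma$ is already irreducible I would take $H_0=0$ (that is, $g=e$): then (3-1) is trivial, and (3-2), (3-3) follow at once from Lemma~\ref{lem:triad-span}, since $\Sigma'=\Sigma$ spans $\mathfrak a$ and $W(\Sigma)$ acts irreducibly.

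The substance lies in the case where $\Sigma$ is reducible. Here the plan is to choose a nonzero $H_0\in\Gamma=\sum_i\mathbb Z m_iH_i$ whose associated swap fuses the orthogonal components of $\Sigma$ into a single irreducible $\Sigma'$, while keeping the regular set fixed. The constraint is exactly (3-1): moving a root $\beta$ from $\Sigma$ to $W$ trades the singular hyperplanes $\langle\beta,\cdot\rangle\in\pi\mathbb Z$ for $\langle\beta,\cdot\rangle\in\frac\pi2+\pi\mathbb Z$, so the regular set survives only if the new hyperplanes already belong to the original arrangement. For a reduced $\tilde\Sigma$ this pins $(\Sigma',W')=(\Sigma,W)$ and leaves no freedom, so the reducible cases must occur for non-reduced $\tilde\Sigma$, where parallel roots of different length share hyperplanes and a nontrivial swap can be invisible to the cell structure. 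I would enumerate these cases from Ikawa's classification of irreducible symmetric triads (\cite{Ikawa2011}) and, in each, exhibit the appropriate $H_0$ (or, should no $\exp\mathfrak a$-conjugation suffice, an element $g\in G$ normalizing $\mathfrak a$), verifying the irreducibility of $\Sigma'$ and condition (3-1) directly; then (3-2), (3-3) again follow from Lemma~\ref{lem:triad-span}.

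The main obstacle is precisely this last step: realizing irreducibility of $\Sigma'$ and the invariance (3-1) of the regular set simultaneously. A generic reassignment of roots perturbs the cells, and it is only the coincidences among parallel roots of different lengths in a non-reduced $\tilde\Sigma$ that let a nontrivial swap go undetected by the regular points; confirming that such coincidences are available in exactly the reducible-$\Sigma$ cases, and that the chosen $H_0$ both welds the components and respects (3-1), is where the classification is indispensable and where the real work of the lemma resides.
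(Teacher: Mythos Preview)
Your proposal is correct and follows essentially the same approach as the paper: take $g=\exp Y$ with $Y\in\Gamma$, verify (1) and (2) directly, and appeal to Ikawa's classification of symmetric triads to find $Y$ making $\Sigma'$ irreducible, whence (3-2) and (3-3) follow from Lemma~\ref{lem:triad-span}. The paper organizes the argument via Matsuki's classification (Lemma~\ref{lem:Matsuki}) of irreducible triples $(\mathfrak g,\theta_0,\theta_1)$ rather than your split on whether $\Sigma$ is already irreducible, but in Cases~(3)--(5) one has $\tilde\Sigma=\Sigma=W$ so your ``take $H_0=0$'' covers them, and in Cases~(1)--(2) the paper invokes exactly the equivalence and classification results from \cite{Ikawa2011} that you propose to enumerate; the paper also asserts (3-1) directly from $\mathfrak p_0\cap\mathrm{Ad}(\exp H)\mathfrak p_1'=\mathrm{Ad}(\exp Y)(\mathfrak p_0\cap\mathrm{Ad}(\exp H)\mathfrak p_1)$ rather than treating it as a constraint to be checked case by case, so your hedge about possibly needing $g$ outside $\exp\mathfrak a$ is unnecessary.
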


Note that $\tilde{\Sigma}' = \tilde{\Sigma}$.
The proofs of Lemmas~\ref{lem:regularity} and \ref{lem:span} are divided into some steps. 
Since $H$ is in $\mathfrak a \subset {\mathfrak p}_0\cap {\mathfrak p}_1$, we have 
$$
\theta_0\theta_1'=\theta_0\mathrm{Ad}(a)\theta_1\mathrm{Ad}(a^{-1})=\theta_0\theta_1\mathrm{Ad}(a^{-2}).
$$
Since $\theta_0$ and $\theta_1$ commute with each other, we have 
$$
(\theta_0\theta_1')^2=\theta_0\theta_1\mathrm{Ad}(a^{-2})\theta_0\theta_1\mathrm{Ad}(a^{-2})
=\mathrm{Ad}(a^{-4}).
$$
Hence $(\theta_0\theta_1')^{2n}$ is identity on $M$ if and only if $\mathrm{Ad}(a^{4n})$ is identity on $M$. 
Here we consider the natural surjective Lie homomorphism of $\mathrm{Ad}(G)$ onto 
the subgroup $\mathrm{Ad}(G)|_{M}$ of the isometry group of $M$.
The kernel of the corresponding homomorphism between the Lie algebras is an ideal of $\mathfrak g$.  
Let $\mathfrak g=\sum {\mathfrak g}_i$ be the direct sum decomposition of $\mathfrak g$ into 
simple ideals,
and $x_0=\sum x_i$ be the corresponding decomposition of $x_0$. 
Without loss of generality we may assume that $x_i\not=0$ for each $i$. 
Then the kernel of $\mathrm{Ad}(G)\rightarrow\mathrm{Ad}(G)|_{M}$ is discrete, hence it is a covering mapping. 
Since $\mathrm{Ad}(G)$ is centerless,
the mapping $\mathrm{Ad}(G)\rightarrow \mathrm{Ad}(G)|_{M}$ 
is an isomorphism. 
In particular, $\mathrm{Ad}(g)$ is identity on $M$ if and only if $\mathrm{Ad}(g)$ is identity on $\mathfrak g$. 
Thus $(\theta_0\theta_1')^{2n}$ is identity on $M$ if and only if $\mathrm{Ad}(a^{4n})$ is identity on $\mathfrak g$.
Since $\mathrm{Ad}(a)$ is identity on 
$$
\mathfrak k (0)
\oplus \mathfrak a
\oplus V({\mathfrak k}_0\cap {\mathfrak p}_1)
\oplus V({\mathfrak k}_1\cap {\mathfrak p}_0)
= \{X\in\mathfrak g\mid [\mathfrak a,X]=\{0\}\},
$$
the condition that $\mathrm{Ad}(a^{4n})=\mathrm{id}$ on $\mathfrak{g}$ is equivalent to the condition that $\mathrm{Ad}(a^{4n})=\mathrm{id}$ on 
$$
\sum_{\lambda\in\Sigma^+}({\mathfrak k}_\lambda\oplus{\mathfrak p}_\lambda)
\oplus \sum_{\alpha\in W^+} \big( V_\alpha^\perp({\mathfrak k}_0\cap {\mathfrak p}_1)
\oplus V_\alpha^\perp({\mathfrak k}_1\cap {\mathfrak p}_0) \big),
$$
which is the orthogonal complementary subspace of $\{X\in\mathfrak g\mid [\mathfrak a,X]=\{0\}\}$. 

Take $H\in\mathfrak a$ with $a = \exp H$. 
Then $\mathrm{Ad}(a^{4n})$ is identity on $\mathfrak g$ if and only if 
$nH$ is in $\Gamma$ (see (\ref{eqn:lattice})).
Hence,
in order to prove Lemma~\ref{lem:regularity} it is sufficient to show that 
there exist $H\in {\mathfrak a}_r$ and $n\in\mathbb{N}$ such that 
$nH\in \Gamma$.
If $(\mathfrak g,\theta_0,\theta_1)$ is not irreducible
then it can be decomposed to the direct sum of irreducible factors. 

\begin{lemma}[Proposition~2.2 of \cite{Matsuki2002}] \label{lem:Matsuki}
Assume that $(\mathfrak g,\theta_0,\theta_1)$ is irreducible
and $\theta_0\theta_1=\theta_1\theta_0$.
Then $(\mathfrak g,\theta_0,\theta_1)$ is one of the following forms:
\begin{enumerate}
\item $\mathfrak g$ is simple. 
\item There exist a compact simple Lie algebra $\mathfrak u$ and an involution $\theta$ on $\mathfrak u$ such that 
$\mathfrak g=\mathfrak u\oplus\mathfrak u$ and that
$$
\theta_0(X_1,X_2)=(X_2,X_1),\quad \theta_1(X_1,X_2)=(\theta(X_2),\theta(X_1)).
$$
\item There exists a compact simple Lie algebra $\mathfrak u$ such that 
$\mathfrak g=\mathfrak u\oplus\mathfrak u\oplus \mathfrak u\oplus\mathfrak u$ and that 
\begin{align*}
& \theta_0(X_1,X_2,X_3,X_4)=(X_2,X_1,X_4,X_3),\\
& \theta_1(X_1,X_2,X_3,X_4)=(X_4,X_3,X_2,X_1).
\end{align*}
\item There exist a compact simple Lie algebra $\mathfrak u$ and an involution $\theta$ on $\mathfrak u$ such that 
$\mathfrak g=\mathfrak u\oplus\mathfrak u$ and that
$$
\theta_0(X_1,X_2)=(X_2,X_1),\quad \theta_1(X_1,X_2)=(\theta(X_1),\theta(X_2)).
$$
\item There exist a compact simple Lie algebra $\mathfrak u$ and an involution $\theta$ on $\mathfrak u$ such that 
$\mathfrak g=\mathfrak u\oplus\mathfrak u$ and that
$$
\theta_0(X_1,X_2)=(\theta(X_1),\theta(X_2)),\quad \theta_1(X_1,X_2)=(X_2,X_1).
$$
\end{enumerate}
\end{lemma}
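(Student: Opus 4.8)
The plan is to reduce the classification to a combinatorial problem about how the two commuting involutions permute the simple ideals of $\mathfrak g$, and then to normalize the involutions on each orbit of ideals. Write $\mathfrak g=\bigoplus_{i\in I}\mathfrak g_i$ as the sum of its simple ideals. Since any automorphism of a compact semisimple Lie algebra permutes the simple ideals, $\theta_0$ and $\theta_1$ induce permutations $\sigma_0,\sigma_1$ of the index set $I$, and the conditions $\theta_0^2=\theta_1^2=\mathrm{id}$, $\theta_0\theta_1=\theta_1\theta_0$ give $\sigma_0^2=\sigma_1^2=\mathrm{id}$ and $\sigma_0\sigma_1=\sigma_1\sigma_0$. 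Thus $\langle\sigma_0,\sigma_1\rangle$ is abelian and a quotient of $\mathbb Z_2\times\mathbb Z_2$. A nontrivial $\langle\theta_0,\theta_1\rangle$-invariant ideal is the same as a proper nonempty $\langle\sigma_0,\sigma_1\rangle$-invariant subset of $I$, so irreducibility of $(\mathfrak g,\theta_0,\theta_1)$ is exactly transitivity of $\langle\sigma_0,\sigma_1\rangle$ on $I$. First I would record that a transitive action of an abelian group is regular (all stabilizers coincide and are normal, so the action factors through a free transitive quotient), whence $|I|$ equals the order of the effective quotient and lies in $\{1,2,4\}$.

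The case $|I|=1$ is exactly (1). For $|I|=2$ the effective group is the transposition group of $\{1,2\}$, so $(\sigma_0,\sigma_1)$ is one of $(\mathrm{swap},\mathrm{id})$, $(\mathrm{id},\mathrm{swap})$, $(\mathrm{swap},\mathrm{swap})$ (both trivial is excluded by transitivity). In each subcase I would identify the two ideals with one copy $\mathfrak u$ of the common simple factor and write $\theta_0,\theta_1$ in block form on $\mathfrak g=\mathfrak u\oplus\mathfrak u$. The relations $\theta_i^2=\mathrm{id}$ force each off-diagonal block to be the inverse of its partner, and the single scalar relation from $\theta_0\theta_1=\theta_1\theta_0$ then pins the residual twist down to an involution $\theta$ of $\mathfrak u$. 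Choosing coordinates so that one of the involutions becomes the pure swap, one reads off the three normal forms: the twist is absorbed when exactly one involution swaps, giving (4) and (5), whereas in the two-swaps subcase the involution $\theta$ genuinely survives, producing (2).

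For $|I|=4$ the effective quotient is all of $\mathbb Z_2\times\mathbb Z_2$ acting regularly, so after relabeling the four ideals one may take $\sigma_0=(12)(34)$ and $\sigma_1=(14)(23)$, whose Cayley graph is the $4$-cycle on $1,2,3,4$ with alternating $\sigma_0$- and $\sigma_1$-edges. Setting $\mathfrak g_1=\mathfrak u$, I would introduce the four component isomorphisms $f,g,h,k$ carried by $\theta_0,\theta_1$ along the edges. The commutation relation, restricted to $\mathfrak g_1$, becomes a single identity among these maps (e.g.\ $g^{-1}h=kf$). Identifying $\mathfrak g_2,\mathfrak g_3,\mathfrak g_4$ with $\mathfrak u$ via $f,k,h$ respectively trivializes three of the four maps, and the commutation identity then forces the fourth to be the identity as well. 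Hence both $\theta_0$ and $\theta_1$ become pure permutations, yielding (3) with no surviving automorphism.

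The step I expect to be the main obstacle is this simultaneous normalization when $|I|=4$: a priori each edge isomorphism carries an independent twist, and it is not obvious that all four can be trivialized at once. The mechanism that makes it succeed is that commutativity gives $(\theta_0\theta_1)^2=\mathrm{id}$, so the holonomy around the $4$-cycle of ideals is trivial; this is precisely what collapses the twist and explains why, in sharp contrast to the two-swaps subcase of $|I|=2$, no involution $\theta$ appears in (3). Having fixed this, the remaining work is the routine block-matrix bookkeeping shared by the $|I|=2$ and $|I|=4$ normalizations, together with checking that the normal forms (2), (4), (5) are mutually inequivalent under $\mathrm{Aut}(\mathfrak g)$.
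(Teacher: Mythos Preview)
The paper does not give its own proof of this lemma; it is simply quoted from Matsuki (Proposition~2.2 of \cite{Matsuki2002}) and used as a black box in the subsequent case analysis of Subsection~\ref{sec:key-lemma}. Your proposal therefore cannot be compared against any argument in the paper itself, but it is a correct and self-contained proof of the statement.

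Your reduction to the permutation action of $\langle\sigma_0,\sigma_1\rangle$ on the set of simple ideals, together with the observation that a transitive action of an abelian group is regular, is exactly the right starting point and immediately yields $|I|\in\{1,2,4\}$. The block-normalization arguments for $|I|=2$ and $|I|=4$ are standard and go through as you describe; in particular, your holonomy interpretation for the four-factor case---that $(\theta_0\theta_1)^2=\mathrm{id}$ forces trivial monodromy around the $4$-cycle of simple ideals---is both the correct mechanism and a clean explanation of why no residual automorphism $\theta$ survives in form~(3), in contrast to form~(2).

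One small point: your closing remark about checking that the forms (2), (4), (5) are mutually inequivalent under $\mathrm{Aut}(\mathfrak g)$ is not required by the lemma as stated, which only asserts that $(\mathfrak g,\theta_0,\theta_1)$ falls into one of the five listed forms, not that the list is disjoint. You can safely omit that step.
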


We will show Lemmas~\ref{lem:regularity} and \ref{lem:span} in each case of Lemma~\ref{lem:Matsuki}. 
The case (5) is reduced to the case (4) if we exchange the roles of $L_0$ and $L_1$. 

In the following of this subsection,
we assume that $L_0$ and $L_1$ are not congruent.
(In a similar manner we can prove Lemmas~\ref{lem:regularity} and \ref{lem:span} in the case where $L_0$ and $L_1$ are congruent. )

\subsubsection{Case (1) of Lemma~\ref{lem:Matsuki}}

In this case,
$(G,K_0,K_1)$ determines a symmetric triad $(\tilde{\Sigma},\Sigma,W)$ of $\mathfrak a$. 
Note that
$H\in\mathfrak a$ is a regular point of $(\tilde{\Sigma},\Sigma,W)$
if and only if the intersection of $L_0$ and $L_1'$ is discrete (Theorem~\ref{thm:main}).
In this case Lemma~\ref{lem:regularity} holds by Lemma~\ref{lem:ST}.

We show Lemma~\ref{lem:span}. 
In general, if we define a new involution $\tilde{\theta}_1'$ by 
$\tilde{\theta}_1':=\tau_{\exp Y}\circ \tilde{\theta}_1\circ \tau_{\exp Y}^{-1}$ 
for $Y\in\Gamma$, then $\tilde{\theta}_0$ and $\tilde{\theta}_1'$ commute with 
each other since $\tilde{\theta}_0\tilde{\theta}_1=\tilde{\theta}_1\tilde{\theta}_0$. 
Let $(\tilde{\Sigma},\Sigma,W)$ and $(\tilde{\Sigma}',\Sigma',W')$ denote the 
symmetric triads induced from $(G,K_0,K_1)$ and $(G,K_0,K_1')$, 
respectively. 
In \cite{Ikawa2011}, Ikawa defined an equivalence relation on the set of 
symmetric triads (see Definition~2.6 in \cite{Ikawa2011}), and proved that 
$(\tilde{\Sigma},\Sigma,W)$ and $(\tilde{\Sigma}',\Sigma',W')$ are 
equivalent (see Theorem~4.33, (2) in \cite{Ikawa2011}). 
By the classification of symmetric triads (Theorem~2.19 in \cite{Ikawa2011}),
there exists $Y\in\Gamma$ such that 
$\Sigma'$ is irreducible.
Thus (1) of Lemma~\ref{lem:span} holds. 
Since $\mathfrak{p}_1'=\mathrm{Ad}(\exp Y){\mathfrak p}_1$, (2) of  Lemma~\ref{lem:span} holds. 
(3-1) of Lemma~\ref{lem:span} follows from 
${\mathfrak p}_0\cap \mathrm{Ad}(\exp H){\mathfrak p}_1'
=\mathrm{Ad}(\exp Y)({\mathfrak p}_0\cap \mathrm{Ad}(\exp H){\mathfrak p}_1)$ 
for each $H\in\mathfrak{a}$. 
(3-2) and (3-3) of Lemma~\ref{lem:span} follow from Lemma~\ref{lem:triad-span}.

\subsubsection{Case (2) of Lemma~\ref{lem:Matsuki}}

There exist a compact simple Lie algebra $\mathfrak u$ and an involution $\theta$ on $\mathfrak u$ such that 
$\mathfrak g=\mathfrak u\oplus\mathfrak u$ and that
$$
\theta_0(X_1,X_2)=(X_2,X_1),\quad \theta_1(X_1,X_2)=(\theta(X_2),\theta(X_1)).
$$
Denote by $\mathfrak u=\mathfrak k\oplus\mathfrak p$ the canonical decomposition of $\mathfrak u$ 
with respect to $\theta$. 
Then 
\begin{align*}
{\mathfrak k}_0&=\{(X,X)\mid X\in\mathfrak u\},\quad {\mathfrak p}_0=\{(X,-X)\mid X\in\mathfrak u\},\\
{\mathfrak k}_1&=\{(X,\theta(X))\mid X\in\mathfrak u\},\quad {\mathfrak p}_1=\{(X,-\theta(X))\mid X\in\mathfrak u\},\\
{\mathfrak p}_0\cap{\mathfrak p}_1&=\{(X,-X)\mid X\in\mathfrak k\}.
\end{align*}
Take a maximal abelian subalgebra $\mathfrak t$ of $\mathfrak k$. 
Then $\mathfrak a=\{(H,-H)\mid H\in \mathfrak t\}$ is a maximal abelian subspace of 
${\mathfrak p}_0\cap{\mathfrak p}_1$. 
Since $L_0$ and $L_1$ are not congruent, $\theta$ is of outer type.
Then it is known by \cite{Ikawa} that 
$(\tilde{\Sigma},\Sigma,W)$ is a symmetric triad of $\mathfrak t$.
Thus Lemma~\ref{lem:regularity} follows from Lemma~\ref{lem:ST}.
In a similar manner to Case~(1), 
Lemma~\ref{lem:span} follows from Lemma~\ref{lem:triad-span}.

\subsubsection{Case (3) of Lemma~\ref{lem:Matsuki}}

There exists a compact simple Lie algebra $\mathfrak u$ such that 
$\mathfrak g=\mathfrak u\oplus\mathfrak u\oplus \mathfrak u\oplus\mathfrak u$ and that 
$$
\theta_0(X_1,X_2,X_3,X_4)=(X_2,X_1,X_4,X_3),\quad \theta_1(X_1,X_2,X_3,X_4)=(X_4,X_3,X_2,X_1).
$$
In this case we have 
\begin{align*}
{\mathfrak k}_0&=\{(X,X,Y,Y)\mid X,Y\in\mathfrak u\},\\
{\mathfrak p}_0&=\{(X,-X,Y,-Y)\mid X,Y\in\mathfrak u\},\\
{\mathfrak k}_1&=\{(X,Y,Y,X)\mid X,Y\in\mathfrak u\},\\
{\mathfrak p}_1&=\{(X,Y,-Y,-X)\mid X,Y\in\mathfrak u\},\\
{\mathfrak p}_0\cap{\mathfrak p}_1&=\{(X,-X,X,-X)\mid X\in\mathfrak u\}.
\end{align*}
Take a maximal abelian subalgebra $\mathfrak t$ of $\mathfrak u$. 
Then 
$$
\mathfrak a=\{(H,-H,H,-H)\mid H\in\mathfrak t\}\subset {\mathfrak p}_0\cap{\mathfrak p}_1
$$
is a maximal abelian subspace of ${\mathfrak p}_0\cap{\mathfrak p}_1$. 
Let $R$ denote the root system of $\mathfrak u$ with respect to 
$\mathfrak t$. 
Then 
$$
\tilde{\Sigma} = \Sigma = W = \{(\alpha,-\alpha,\alpha,-\alpha)\mid \alpha\in R\}.
$$
Thus, in this case, Lemmas~\ref{lem:regularity} and \ref{lem:span} follow
from the theory of compact Lie groups.

\subsubsection{Case (4) of Lemma~\ref{lem:Matsuki}}

There exist a compact simple Lie algebra $\mathfrak u$ and an involution $\theta$ on $\mathfrak u$ such that 
$\mathfrak g=\mathfrak u\oplus\mathfrak u$ and that 
$$
\theta_0(X_1,X_2)=(X_2,X_1),\quad \theta_1(X_1,X_2)=(\theta(X_1),\theta(X_2)).
$$
Denote by $\mathfrak u=\mathfrak k\oplus\mathfrak p$ the canonical decomposition of 
$\mathfrak u$ with respect to $\theta$. 
Then 
\begin{align*}
{\mathfrak k}_0&=\{(X,X)\mid X\in\mathfrak u\},\quad {\mathfrak p}_0=\{(X,-X)\mid X\in\mathfrak u\},\\
{\mathfrak k}_1&=\{(X,Y)\mid X,Y\in\mathfrak k\},\quad {\mathfrak p}_1=\{(X,Y)\mid X,Y\in\mathfrak p\},\\
{\mathfrak p}_0\cap{\mathfrak p}_1&=\{(X,-X)\mid X\in\mathfrak p\}.
\end{align*}
Take a maximal abelian subspace $\mathfrak{a}'$ of $\mathfrak p$.
Then $\mathfrak{a}=\{(H,-H)\mid H\in \mathfrak{a}'\}$ is a maximal abelian subspace of 
$\mathfrak{p}_0\cap{\mathfrak p}_1$. 
Let $R$ denote the restricted root system of $\mathfrak u$ with respect to 
$\mathfrak{a}'$.
Then 
$$
\tilde{\Sigma} = \Sigma = W = \{(\alpha,-\alpha)\mid \alpha\in R\}.
$$
Thus, in this case, Lemmas~\ref{lem:regularity} and \ref{lem:span} follow
from the theory of compact symmetric spaces.

This completes the proof of Lemma~\ref{lem:span}.

\subsubsection{General case}
\label{sec:general case}

In the case where $(\mathfrak g,\theta_0,\theta_1)$ is not irreducible,
we can show Lemma~\ref{lem:regularity}
using irreducible decomposition of  $(\mathfrak g,\theta_0,\theta_1)$ 
and applying the result in the case where it is irreducible. 
We state more precisely. 

If $(\mathfrak g,\theta_0,\theta_1)$ is not irreducible,
then we decompose it to the direct sum of irreducible factors as follows: 
$$
(\mathfrak g,\theta_0,\theta_1)=\sum_{j=1}^m ({\mathfrak g}^{(j)},\theta_0^{(j)},\theta_1^{(j)}).
$$
We have two canonical decompositions of ${\mathfrak g}^{(j)}$:
$$
{\mathfrak g}^{(j)}={\mathfrak k}_0^{(j)}\oplus {\mathfrak p}_0^{(j)}={\mathfrak k}_1^{(j)}\oplus {\mathfrak p}_1^{(j)}.
$$
For $i=0,1$ we have 
$$
{\mathfrak k}_i=\sum_{j=1}^m {\mathfrak k}_i^{(j)},\quad {\mathfrak p}_i=\sum_{j=1}^m {\mathfrak p}_i^{(j)}.
$$
Take a maximal abelian subspace ${\mathfrak a}^{(j)}$ of ${\mathfrak p}_0^{(j)}\cap {\mathfrak p}_1^{(j)}$. 
Then $\displaystyle{\mathfrak a=\sum_{j=1}^m {\mathfrak a}^{(j)}}$ is a maximal abelian subspace of 
${\mathfrak p}_0\cap {\mathfrak p}_1$. 
Each $({\mathfrak g}^{(j)},\theta_0^{(j)},\theta_1^{(j)})$ defines 
a triad $(\tilde{\Sigma}^{(j)},\Sigma^{(j)},W^{(j)})$, and each 
$(\tilde{\Sigma}^{(j)},\Sigma^{(j)},W^{(j)})$ defines the set ${\mathfrak a}^{(i)}_r$of 
regular points of ${\mathfrak a}^{(i)}$. 
Set $\Gamma^{(j)}=\{X\in {\mathfrak a}^{(j)}\mid \langle\lambda ,X\rangle 
\in \frac{\pi}{2}\mathbb{Z} \mbox{ for all }\lambda\in\tilde{\Sigma}^{(j)}\}$. 
Then 
\begin{align*}
& \tilde{\Sigma}=\bigcup_{j=1}^m \tilde{\Sigma}^{(j)},\quad 
\Sigma = \bigcup_{j=1}^m \Sigma^{(j)}, \quad 
W = \bigcup_{j=1}^m W^{(j)},\quad 
 {\mathfrak a}_r=\sum_{j=1}^m {\mathfrak a}_r^{(i)} ,\quad 
\Gamma = \sum_{j=1}^m \Gamma^{(j)}.
\end{align*}
As we mentioned above there exist $H^{(j)}\in {\mathfrak a}_r^{(j)}$ and 
$n_j\in\mathbb{N}$ such that $n_jH^{(j)}\in \Gamma^{(j)}$. 
If we set $H=\sum H^{(j)}\in {\mathfrak a}_r$ and $n=n_1\cdots n_m\in\mathbb{N}$, 
then $nH$ is in $\Gamma$. 
Thus Lemma~\ref{lem:regularity} holds.

\section{Examples}
\label{sec:examples}

By Theorems~\ref{thm:complexflag} and \ref{thm:main}, we can explicitly describe
a maximal antipodal set of a complex flag manifold
and the intersection of two real flag manifolds.
In this section, we demonstrate some concrete examples,
where two real flag manifolds $L_0$ and $L_1$ are not congruent.
See \cite{IST2014} and \cite{IST-ICM2014} for examples where $L_0$ and $L_1$ are congruent.

\subsection{The case $(G,K_0,K_1)=(SU(2n), SO(2n), Sp(n))$}

Let $G = SU(2n)$.
Define an $\mathrm{Ad}(G)$-invariant inner product on $\mathfrak{g} = \mathfrak{su}(2n)$ normalizing as
\begin{equation} \label{eq:invariant inner product on su(m)}
\langle X,Y \rangle = -\frac{1}{8n}B(X,Y) = -\frac{1}{2}\mathrm{tr }(XY) \qquad (X,Y \in \mathfrak{g}),
\end{equation}
where $B$ is the Killing form of $\mathfrak{g}$.
We give involutions $\tilde\theta_0$ and $\tilde\theta_1$ on $G$ by
$$
\tilde\theta_0(g) = \bar{g},\qquad
\tilde\theta_1(g) = J_n^{-1} \bar{g} J_n \qquad (g \in G),
$$
where $J_n := \left[\begin{array}{cc} O & -I_n \\ I_n & O \end{array} \right]$
and $I_n$ is the identity matrix of $n \times n$.
Since $\tilde\theta_0 \tilde\theta_1 = \tilde\theta_1 \tilde\theta_0$,
the Lie algebra $\mathfrak{g} = \mathfrak{su}(2n)$ is decomposed as (\ref{eqn:canonical decompositions})
and (\ref{eq:simultaneous decomposition}), where
\begin{align*}
\mathfrak{k}_0 &= \mathfrak{so}(2n), \\
\mathfrak{p}_0 &= \{ \sqrt{-1}Z \mid Z \in M_{2n}(\mathbb{R}),\ {}^tZ=Z,\ \mathrm{tr }Z=0 \}, \\
\mathfrak{k}_1 &= \mathfrak{sp}(n), \\
\mathfrak{p}_1 &= \left\{ \left[\begin{array}{cc} X & Y \\ \bar{Y} & -\bar{X} \end{array} \right] \ \bigg| \
\begin{array}{l} X,Y \in M_n(\mathbb{C}), \\ {}^t\bar{X}=-X,\ {}^tY=-Y,\ \mathrm{tr }X=0 \end{array} \right\}.
\end{align*}
Take a maximal abelian subspace $\mathfrak{a}$ in 
$$
\mathfrak{p}_0 \cap \mathfrak{p}_1
= \left\{ \sqrt{-1} \left[ \begin{array}{cc} X & Y \\ -Y & X \end{array} \right] \ \bigg| \
\begin{array}{l} X,Y \in M_n(\mathbb{R}), \\ {}^tX = X,\ {}^tY=-Y,\ \mathrm{tr }X=0 \end{array} \right\}
$$
as
$$
\mathfrak{a}
= \left\{ H = \sqrt{-1} \left[ \begin{array}{cc} X & O \\ O & X \end{array} \right] \ \bigg| \
\begin{array}{l} X = \mathrm{diag}(t_1, \ldots, t_n), \\ t_1, \ldots, t_n \in \mathbb{R},\
t_1+\cdots+t_n=0 \end{array} \right\}.
$$
In this case, by a direct calculation, we verify
$$
\tilde{\Sigma} = \Sigma = W = \{ \pm(e_i-e_j) \mid 1 \leq i < j \leq n \},
$$
where $e_i-e_j \in \mathfrak{a} \ (i \neq j)$ is defined by
$\langle e_i-e_j, H \rangle = t_i-t_j$ for all $H \in \mathfrak{a}$.
Hence the triad $(\tilde{\Sigma}, \Sigma, W)$ is of type $\text{III-}A_{n-1}$
in the classification of symmetric triads (see \cite[p.~92]{Ikawa2011}).
The set of regular points is
$$
\mathfrak{a}_r = \left\{ H \in \mathfrak{a} \ \Big| \ \langle e_i-e_j, H \rangle \not\in \frac{\pi}{2}\mathbb{Z}
\ (1 \leq i < j \leq n) \right\}.
$$

Now we take a nonzero vector
$$
x_0 = \sqrt{-1} \left[ \begin{array}{cc} X & O \\ O & X \end{array} \right] \in \mathfrak{a}
$$
where $X = \mathrm{diag}(x_1I_{n_1}, \ldots, x_{r+1}I_{n_{r+1}})$ and
$x_i$ are distinct real numbers satisfying $n_1x_1+\cdots+n_{r+1}x_{r+1} = 0$.
Then real flag manifolds
\begin{align*}
L_0 &= \mathrm{Ad}(K_0)x_0
\cong SO(2n)/S(O(2n_1) \times \cdots \times O(2n_{r+1}))
\cong F_{2n_1, \ldots, 2n_r}(\mathbb{R}^{2n}), \\
L_1 &= \mathrm{Ad}(K_1)x_0
\cong Sp(n)/Sp(n_1) \times \cdots \times Sp(n_{r+1})
\cong F_{n_1, \ldots, n_r}(\mathbb{H}^{n})
\end{align*}
are embedded as real forms in the complex flag manifold
$$
M = \mathrm{Ad}(G)x_0
\cong SU(2n)/S(U(2n_1) \times \cdots \times U(2n_{r+1}))
\cong F_{2n_1, \ldots, 2n_r}(\mathbb{C}^{2n}).
$$
Here, for positive integers $n, n_1, \ldots, n_r$ which satisfy $n_{r+1} := n - (n_1 + \cdots + n_r) > 0$,
let $F_{n_1, \ldots, n_r}(\mathbb{K}^n)$ denote the flag manifold of sequences of $\mathbb{K}$-subspaces in $\mathbb{K}^n$,
where $\mathbb{K} = \mathbb{R}, \mathbb{C}$ or $\mathbb{H}$, that is,
$$
F_{n_1, \ldots, n_r}(\mathbb{K}^n)
= \left\{ (V_1, \ldots, V_r) \ \Big| \
\begin{array}{l}
V_1 \subset \cdots \subset V_r \subset \mathbb{K}^n : \text{$\mathbb{K}$-subspaces} \\
\dim_{\mathbb{K}} V_l = n_1 + \cdots + n_l \ (l=1, \ldots, r)
\end{array} \right\},
$$
which was originally called a flag manifold.

By Theorem~\ref{thm:main},
for $a = \exp H \ (H \in \mathfrak{a})$,
the intersection $L_0 \cap \mathrm{Ad}(a)L_1$ is discrete
if and only if $H \in \mathfrak{a}_r$,
and then
$$
L_0 \cap \mathrm{Ad}(a)L_1
= M \cap \mathfrak{a}
= W(\tilde\Sigma)x_0
= W(R_0)x_0 \cap \mathfrak{a}
= W(R_1)x_0 \cap \mathfrak{a}.
$$
Note that, in this case, $\mathfrak{a}$ is also a maximal abelian subspace in $\mathfrak{p}_1$,
and $\tilde\Sigma = R_1$.
The root systems $\tilde\Sigma$ and $R_1$ are of type $A_{n-1}$,
and their Weyl groups $W(\tilde\Sigma)$ and $W(R_1)$
act on $\mathfrak{a}$ as permutations of diagonal elements of $X = \mathrm{diag}(t_1, \ldots, t_n)$.
The intersection $L_0 \cap \mathrm{Ad}(a)L_1$ can be described
in the flag model $F_{2n_1, \ldots, 2n_r}(\mathbb{C}^{2n})$ as follows.
In $\mathbb{C}^{2n}$ we define $i,j,k$ by
$$
iv = \sqrt{-1}v, \quad
jv = J_n \bar{v}, \quad
kv = ijv \quad (v \in \mathbb{C}^{2n}).
$$
Then $\mathbb{C}^{2n}$ can be identified with $\mathbb{H}^n$.
This identification gives an embedding of $F_{n_1, \ldots, n_r}(\mathbb{H}^n)$
into $F_{2n_1, \ldots, 2n_r}(\mathbb{C}^{2n})$.
Let $e_1, \ldots, e_{2n}$ be the standard basis of $\mathbb{C}^{2n}$,
and let $W_i := \langle e_i, e_{n+i} \rangle_\mathbb{C} = \langle e_i \rangle_\mathbb{H} \ (1 \leq i \leq n)$.
For $a = \exp H \ (H \in \mathfrak{a}_r)$,
the intersection of $F_{2n_1, \dots, 2n_r}(\mathbb R^{2n})$
and $aF_{n_1, \dots, n_r}(\mathbb H^n)$ is
\begin{align*}
&
F_{2n_1, \dots, 2n_r}(\mathbb R^{2n}) \cap aF_{n_1, \dots, n_r}(\mathbb H^n) \\
& = \{(
W_{i_1} \oplus \cdots \oplus W_{i_{n_1}},
W_{i_1} \oplus \cdots \oplus W_{i_{n_1+n_2}}, \dots,
W_{i_1} \oplus \cdots \oplus W_{i_{n_1+\cdots+n_r}}) \\
& \quad \mid 1 \le i_1 < \cdots < i_{n_1} \le n,\,
1 \le i_{n_1+1} < \cdots < i_{n_1+n_2} \le n, \dots, \\
& \qquad 1 \le i_{n_1+\cdots+n_{r-1}+1} < \cdots < i_{n_1+\cdots+n_r} \le n, \\
& \qquad i_1, i_2, \dots, i_{n_1+\cdots+n_r} \text{ are mutually distinct}
\},
\end{align*}
which is an antipodal set of $F_{2n_1, \dots, 2n_r}(\mathbb C^{2n})$.
Indeed, by Theorem~\ref{thm:complexflag}, 
\begin{align*}
& \{(\langle e_{i_1}, \dots, e_{i_{2n_1}}\rangle_{\mathbb C},
\langle e_{i_1}, \dots, e_{i_{2n_1+2n_2}}\rangle_{\mathbb C}, \dots, 
\langle e_{i_1}, \dots, e_{i_{2n_1+\cdots+2n_r}}\rangle_{\mathbb C}) \\
& \quad \mid 1 \leq i_1 < \cdots < i_{2n_1} \leq 2n,\,
1 \leq i_{2n_1+1} < \cdots < i_{2n_1+2n_2} \leq 2n, \dots, \\
& \qquad 1 \leq i_{2n_1+\cdots+2n_{r-1}+1} < \cdots < i_{2n_1+\cdots+2n_r} \leq 2n, \\
& \qquad i_1, i_2, \dots, i_{2n_1+\cdots+2n_r} \text{ are mutually distinct}
\}
\end{align*}
is a maximal antipodal set of $F_{2n_1, \dots, 2n_r}(\mathbb C^{2n})$.
In particular, we have
$$
\#(L_0 \cap L_1')
= \frac{n!}{n_1!n_2!\cdots n_{r+1}!},
$$
where $L_1' = \mathrm{Ad}(a)L_1$ for $a = \exp H \ (H \in \mathfrak{a}_r)$.
Note that the cardinality $\#(L_0 \cap L_1')$ is equal to $SB(L_1;\mathbb{Z}_2)$ in this case (see p.~202 of \cite{Borel53}).

\subsection{The case $(G,K_0,K_1)=(SU(n), SO(n), S(U(1)\times U(n-1)))$}

Let $G =SU(n)$.
Define an $\mathrm{Ad}(G)$-invariant inner product $\langle \ , \ \rangle$ on $\mathfrak{g} = \mathfrak{su}(n)$
as (\ref{eq:invariant inner product on su(m)}).
We give involutions $\tilde\theta_0$ and $\tilde\theta_1$ on $G = SU(n)$ by
$$
\tilde\theta_0(g) = \bar{g}, \qquad
\tilde\theta_1(g) = I_{1,n-1} g I_{1,n-1} \qquad (g \in G)
$$
where $I_{1,n-1} = \left[ \begin{array}{c|c} 1 & \\ \hline & -I_{n-1} \end{array} \right]$.
Since $\tilde\theta_0 \tilde\theta_1 = \tilde\theta_1 \tilde\theta_0$,
the Lie algebra $\mathfrak{g} = \mathfrak{su}(n)$ is decomposed as (\ref{eqn:canonical decompositions})
and (\ref{eq:simultaneous decomposition}), where
\begin{align*}
\mathfrak{k}_0 &= \mathfrak{so}(n), \\
\mathfrak{p}_0 &= \{ \sqrt{-1}Y \mid Y \in M_n(\mathbb{R}), {}^tY = Y,\ \mathrm{tr } Y=0 \}, \\
\mathfrak{k}_1 &= \mathfrak{s}\big( \mathfrak{u}(1) \oplus \mathfrak{u}(n-1) \big), \\
\mathfrak{p}_1 &= \left\{ \left[ \begin{array}{c|c} & -^t\bar{Z} \\ \hline Z & \end{array} \right] 
\ \Big| \ Z \in \mathbb{C}^{n-1} \right\}.
\end{align*}
Take a maximal abelian subspace of
$$
\mathfrak{p}_0 \cap \mathfrak{p}_1
= \left\{ \sqrt{-1} \left[ \begin{array}{c|c} & {}^tY \\ \hline Y & \end{array} \right]
\ \Big| \ Y \in \mathbb{R}^{n-1} \right\}
$$
as
$$
\mathfrak{a}
= \left\{ H = \sqrt{-1} \left[ \begin{array}{c|c} & {}^tY \\ \hline Y & \end{array} \right]
\ \Big| \ Y = {}^t(s,0, \ldots, 0), s \in \mathbb{R} \right\}.
$$
In this case, by a direct calculation, we verify
$$
\Sigma = \{ \pm\alpha \},\quad
W = \{ \pm\alpha, \pm 2\alpha \},\quad
\tilde\Sigma = \Sigma \cup W = \{ \pm\alpha, \pm 2\alpha \},
$$
where $\alpha = 
\sqrt{-1} \left[ \begin{array}{c|c} & {}^tY \\ \hline Y & \end{array} \right]$
and $Y = {}^t(1,0,\ldots,0)$.
Hence the triad $(\tilde{\Sigma}, \Sigma, W)$ is of type $\text{II-}BC_1$
in the classification of symmetric triads (see p.~91 of \cite{Ikawa2011}).
The set of regular points is
\begin{align*}
\mathfrak{a}_r
&= \bigcap_{\lambda\in\Sigma\atop\beta\in W}
\left\{ H \in \mathfrak{a} \ \Big| \
\langle\lambda, H\rangle \not\in \pi\mathbb{Z},\,
\langle\beta, H\rangle \not\in \frac{\pi}{2} + \pi\mathbb{Z} \right\} \\
&= \left\{ H \in \mathfrak{a} \ \Big| \
\langle\alpha, H\rangle \not\in \frac{\pi}{4}\mathbb{Z} \right\}.
\end{align*}

Now we take $x_0 = 
\sqrt{-1} \left[ \begin{array}{c|c} & {}^tY \\ \hline Y & \end{array} \right]
\in \mathfrak{a}$
where $Y = {}^t(1,0,\ldots,0)$.
Then real flag manifolds
\begin{align*}
L_0 &= M \cap \mathfrak{p}_0 = \mathrm{Ad}(K_0)x_0 \cong SO(n)/S(O(1) \times O(1) \times O(n-2)) \cong F_{1,1}(\mathbb{R}^n),\\
L_1 &= M \cap \mathfrak{p}_1 = \mathrm{Ad}(K_1)x_0
\cong S^{2n-3}
\end{align*}
are embedded as real forms in the complex flag manifold
$$
M = \mathrm{Ad}(G) x_0 \cong SU(n)/S(U(1) \times U(1) \times U(n-2)) \cong F_{1,1}(\mathbb{C}^n).
$$
Note that $L_1$ is diffeomorphic to $S^{2n-3}$
since it is an orbit of the linear isotropy representation of
the symmetric pair $(SU(n), S(U(1)\times U(n-1))$ of rank one.

By Theorem~\ref{thm:main},
for $a = \exp H \ (H \in \mathfrak{a})$,
the intersection $L_0 \cap \mathrm{Ad}(a)L_1$ is discrete
if and only if $H \in \mathfrak{a}_r$, that is, $s \not\in \frac{\pi}{4}\mathbb{Z}$.
In this case
$$
L_0 \cap \mathrm{Ad}(a)L_1
= M \cap \mathfrak{a}
= W(\tilde\Sigma)x_0
= \{ \pm x_0 \}.
$$
In the flag model $F_{1,1}(\mathbb{C}^n)$, the intersection is isometric to
$$
\big\{ \big( \langle e_1 \rangle_{\mathbb{C}}, \langle e_1, e_2 \rangle_{\mathbb{C}} \big),\
\big( \langle e_2 \rangle_{\mathbb{C}}, \langle e_1, e_2 \rangle_{\mathbb{C}} \big) \big\}.
$$
In particular, we have
$\#(L_0 \cap L_1') = 2 = SB(L_1; \mathbb{Z}_2)$,
where $L_1' = \mathrm{Ad}(a)L_1$ for $a = \exp H \ (H \in \mathfrak{a}_r)$.

\section*{Acknowledgment}
\label{sec:Acknowledgment}

The authors would like to thank Professor Yoshihiro Ohnita for his kind explanation of the paper \cite{Ohnita} for them.
This work was partly supported by MEXT Promotion of Distinctive Joint Research Center Program JPMXP0723833165.
The first author was supported by JSPS KAKENHI Grant Number
JP16K05128 and JP22K03285.
The second author was supported by JSPS KAKENHI Grant Number
JP16K05120 and JP20K03576.
The third author was supported by JSPS KAKENHI Grant Number
JP24K06714.
The fourth author was supported by JSPS KAKENHI Grant Number JP21K03250.
The fifth author was supported by JSPS KAKENHI Grant Number
JP21K03218.


\end{document}